\DeclareMathOperator{\qdim}{{\rm qdim}}
\newcommand{\1}{ \mathbf{1}}
\newcommand{\maru}[1]{{\ooalign{\hfil#1\/\hfil\crcr
\raise.167ex\hbox{\mathhexbox20D}}}}
\newcommand{\ruby}[2]{%
 \leavevmode
 \setbox0=\hbox{#1}%
 \setbox1=\hbox{\tiny #2}%
 \ifdim\wd0>\wd1 \dimen0=\wd0 \end{lemma}se \dimen0=\wd1 \fi
 \hbox{%
   \kanjiskip=0pt plus 2fil
   \xkanjiskip=0pt plus 2fil
   \vbox{%
     \hbox to \dimen0{%
       \tiny \hfil#2\hfil}%
     \nointerlineskip
     \hbox to \dimen0{\mathstrut\hfil#1\hfil}}}}
\DeclareMathOperator*{\fusion}{\boxtimes}
\newcommand{\Z}{\mathbb{Z}}
\newcommand{\C}{\mathbb{C}}
\newcommand{\R}{\mathbb{R}}
\newcommand{\Q}{\mathbb{Q}}
\newcommand{\g}{\mathfrak{g}}
\newcommand{\h}{\mathfrak{h}}
\newcommand{\aut}{\mathrm{Aut}\,}
\newcommand{\Aut}{\mathrm{Aut}\,}
\newcommand{\wt}{\mathrm{wt}}
\newcommand{\p}{\varphi}
\makeatletter \@addtoreset{equation}{section}
\theoremstyle{plain}
\newtheorem{main}{Main Theorem}
\newtheorem{theorem}{Theorem}[section]
\newtheorem{proposition}[theorem]{Proposition}
\newtheorem{lemma}[theorem]{Lemma}
\newtheorem{corollary}[theorem]{Corollary}
\theoremstyle{definition}
\newtheorem{definition}[theorem]{Definition}
\newtheorem{conjecture}[theorem]{Conjecture}
\theoremstyle{remark}
\newtheorem{remark}[theorem]{Remark}
\numberwithin{equation}{section}
\title[Orbifold VOA having group like fusions]{ Cyclic orbifolds of lattice vertex operator algebras having group like fusions}
 \subjclass[2010]{Primary  17B69; Secondary 11H56}
\author{Ching Hung Lam} %
  \address {Institute of Mathematics, Academia Sinica, Taipei 10617, Taiwan} 
  \email{chlam@math.sinica.edu.tw}
\date{}
\thanks{C.\,H. Lam was partially supported by a research grant AS-IA-107-M02 of Academia Sinica  and MoST grant 104-2115-M-001-004-MY3 of Taiwan}
\newcommand{\sfr}[2]{\leavevmode\kern-.1em
  \raise.5ex\hbox{\the\scriptfont0 #1}\kern-.1em
  /\kern-.15em\lower.25ex\hbox{\the\scriptfont0 #2}}
\begin{document}

\begin{abstract}
Let $L$ be an even (positive definite) lattice and $g\in O(L)$. 
In this article, we prove that the orbifold vertex operator algebra $V_{L}^{\hat{g}}$ has group-like fusion if and only if $g$ acts trivially on the discriminant group $\mathcal{D}(L)=L^*/L$ (or equivalently $(1-g)L^*<L$). We also determine their fusion rings and the corresponding quadratic space structures when $g$ is fixed point free on $L$. By applying our method to some coinvariant sublattices of the Leech lattice $\Lambda$, we prove a conjecture proposed by G. H\"ohn. In addition, we also discuss a construction of certain holomorphic vertex operator algebras of central charge $24$ using the the orbifold vertex operator algebra $V_{\Lambda_g}^{\hat{g}}$.
\end{abstract}
\maketitle


\section{Introduction}\label{intro}
 
The classification of holomorphic vertex operator algebras (VOA) of central charge $24$ is one of the important problems in the theory of vertex operator algebra. In 1993, Schellekens \cite{Sc93} obtained a partial classification by determining the possible Lie algebra structures for the weight one subspace of holomorphic VOA of central charge $24$. It is also believed that the VOA structure of a holomorphic VOA of central charge $24$ is uniquely determined by its weight one Lie algebra. Recently, there has been much progress towards the classification; Schellekens' list was verified mathematically  in \cite{EMS}. Moreover, it has been verified that all $71$ Lie algebras in Schellekens' list can be realized as weight one Lie algebras of some holomorphic VOAs of central charge $24$ \cite{EMS,FLM,Lam,LS,LS3,LS4,LLin,SS}. The uniqueness conjecture was also verified for all the cases with nontrivial weight one subspace \cite{DM,EMS2,KLL,LLin,LS2,LS5,LS6,LS7}.  The main technique is usually referred as to ``Orbifold construction" but the proofs often involved  case by case analysis and  a lot of computer calculations. A uniform approach still seems to be missing. 

Very recently, G. H\"ohn\,\cite{Ho2} has proposed a different idea for studying  the list of Schellekens using automorphisms of Niemeier lattices and Leech lattice. Along with other results, he suggested a more uniform construction of all $71$ holomorphic VOAs in Schellekens' list using the Leech lattice.  H\"ohn's idea \cite{Ho2} may be viewed as  a generalization of the theory of Cartan subalgebras to VOA. The main idea  is to study the commutant subVOA of the subVOA generated by a Cartan subalgebra of $V_1$ and try to construct a holomorphic VOA using certain simple current extensions of  lattice VOAs and some orbifold subVOAs in the Leech lattice VOA. In this article, we will  study his approach and try to elucidate his idea. We will also realize his proposed construction for a special case.   

 Now let us explain his ideas in detail.  
Let $\mathfrak{g}$ be a Lie algebra in Schellekens' list and let $V$ be  a strongly regular holomorphic VOA of central charge $24$ such that $V_1\cong \mathfrak{g}$. 
Suppose that $\mathfrak{g}$ is semisimple and let $$\mathfrak{g}=\mathfrak{g}_{1,k_1}\oplus \cdots\oplus \mathfrak{g}_{r, k_r},$$ where  $\mathfrak{g}_{i,k_i}$'s are simple ideals of $\mathfrak{g}$ at level $k_i$.  Then the subVOA $U$ generated by $V_1$ is isomorphic to the tensor of simple affine VOAs  
\[
L_{\widehat{\mathfrak{g}_{1}}}(k_1,0) \otimes \cdots   \otimes L_{\widehat{\mathfrak{g}_{r}}}(k_r,0) 
\]
and $U$ is a full subVOA of $V$, i.e, $U$ and $V$ have the same conformal element \cite{DM}.  It was shown in \cite{DW} that for each $1\leq i\leq r$,  the affine VOA 
$L_{\mathfrak{g}_{i}}(k_i,0)$ contains a lattice VOA $V_{\sqrt{k_i}Q^i_L}$, where $Q^i_L$ is the lattice spanned by the long roots of $\mathfrak{g}_i$. 

Set $Q_\g= \sqrt{k_1}Q^1_L\oplus \cdots \oplus \sqrt{k_r}Q^r_L$, $W=\mathrm{Com}_{V}( V_{ Q_\g})$ and $X =\mathrm{Com}_{V}(W)$.  
Then it is clear that 
$X\supset V_{ Q_\g}$ and 
$\mathrm{Com}_{V}(X)=W$ and $\mathrm{Com}_{V}(W)=X$. 
In this case, the VOA $X$ is an extension of the lattice VOA $V_{Q_\g}$ and hence, 
 $ X\cong V_{L_\mathfrak{g}}$ for some even lattice $L_{\mathfrak{g}} > Q_\g$.  Notice that $V_{L_\mathfrak{g}}$ has group-like fusion, i.e., all irreducible $V_{L_\mathfrak{g}}$-modules are simple current modules (cf. \cite[Corollary 12.10]{DL94}). In this case, the set of all irreducible modules $R(V_{L_\g})$ forms an abelian group with respect to the fusion product.    
Indeed,   
$R( V_{L_\mathfrak{g}})$ has the quadratic form $q$ : $R(V_{L_\mathfrak{g}})\to\Q/ \Z$  defined by 
\[
q(V_{\alpha+{L_\mathfrak{g}}})=\wt(V_{\alpha+{L_\mathfrak{g}}})= \frac{(\alpha|\alpha)}{2} \mod \Z, 
\]
where $\wt(\cdot)$ denotes the conformal weight of the module. Moreover, $R( V_{L_\mathfrak{g}})$ is isomorphic to $\mathcal{D}(L_\mathfrak{g})=L_\mathfrak{g}^*/L_\mathfrak{g}$ as a quadratic space.

By some recent results on coset constructions \cite[Main Theorem 2]{CKM} (see also  \cite{KMi} and \cite{Lin}), it is known that the VOA $W$ also has group-like fusion and 
$R(W)$ forms a quadratic space isomorphic to $(R( V_{L_\mathfrak{g}}), -q)$, where the quadratic form is defined by conformal weights modulo $\Z$. Since $V_{L_\g}\otimes W$ is a full subVOA of $V$ and $V$ is holomorphic, the VOA $V$ defines a maximal totally singular subspace of $R( V_{L_\mathfrak{g}})\times R(W)$; hence it induces an anti-isomorphism of quadratic spaces 
$\varphi: (R( V_{L_\mathfrak{g}}), q)  \to (R(W), q')$ such that $q(M)+q'(\varphi(M))=0$ for all $M\in R( V_{L_\mathfrak{g}})$. 

Conversely, let $\varphi: (R( V_{L_\mathfrak{g}}), q)  \to (R(W), -q')$ be an isomorphism of quadratic spaces. Then the set
$\{ (M, \varphi(M))\mid M\in R( V_{L_\mathfrak{g}})\}$ is a maximal totally singular subspace of $R( V_{L_\mathfrak{g}})\times R(W)$ and hence $U=\bigoplus_{M\in R( V_{L_\mathfrak{g}})} M\otimes \varphi(M)$ 
has a structure of a holomorphic VOA. 

H\"ohn noticed that the VOA $W$ seems to be related to a certain coinvariant sublattice of the Leech lattice $\Lambda$.  For $g\in O(\Lambda)$, the coinvariant lattice $\Lambda_g$ is defined to be the sublattice of $\Lambda$ which is orthogonal to all fixed points of $g$ in $\Lambda$ (see Definition \ref{def:2.2}).
In particular, H\"ohn proposed the following conjecture.
 
\begin{conjecture}\label{c:1.1}
For each semisimple case in Schellekens' list, there exists an isometry $g\in O(\Lambda)$ such that $(R(V_{\Lambda_g}^{\hat{g}}),q)\cong (R( V_{L_\mathfrak{g}}), -q)$ as quadratic spaces.
\end{conjecture}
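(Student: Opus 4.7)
The plan is to proceed case by case through the semisimple entries of Schellekens' list, using as the two main tools the paper's criterion that $V_L^{\hat g}$ has group-like fusion exactly when $g$ acts trivially on $\mathcal{D}(L)$, together with its explicit description of $(R(V_L^{\hat g}), q)$ in the fixed-point free case. For each such $\g$, the first step is to read off the target lattice $L_\g$ from the coset construction recalled above: from any holomorphic $V$ with $V_1 = \g$, the extension $X \cong V_{L_\g}$ of $V_{Q_\g}$ determines $L_\g$ and hence the target quadratic form $(\mathcal{D}(L_\g), q)$ as a concrete finite quadratic form computable purely from the root data of $\g$ and the levels $k_i$.

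Next I would produce a candidate isometry $g \in O(\Lambda)$. Following H\"ohn's framework, the natural candidates come from conjugacy classes in the Conway group $\mathrm{Co}_0 = O(\Lambda)$ whose coinvariant lattice $\Lambda_g$ has rank $24 - \mathrm{rank}(\g)$ and whose frame shape matches the order and characteristic polynomial suggested by $\g$. By construction, $g$ is fixed-point free on $\Lambda_g$, so the paper's second main result applies as soon as the criterion of the first main result is in place, namely that $g$ acts trivially on $\mathcal{D}(\Lambda_g)$, equivalently $(1-g)\Lambda_g^* < \Lambda_g$. For some classes this is automatic; for others one must modify $g$ by a translation (the ``standard lift'' adjustment familiar from the orbifold literature) before the hypothesis holds.

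Once these conditions are secured, the second main theorem delivers an explicit presentation of $(R(V_{\Lambda_g}^{\hat g}), q)$ in terms of $\mathcal{D}(\Lambda_g)$ and the induced $g$-action, and what remains is a finite comparison between this quadratic form and $(\mathcal{D}(L_\g), -q)$. Invariants of finite quadratic forms (order, signature modulo $8$, $p$-adic genus symbols) cut the verification down to a short calculation in each case, and in favorable cases the isomorphism is already forced by cardinality and discriminant. For the cases whose $\Lambda_g$ coincides with a well-studied Niemeier coinvariant, this comparison can be read off directly from existing tables.

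The main obstacle I expect is not the comparison of quadratic forms but the selection step. There is no a priori reason for a given $g \in O(\Lambda)$ to induce the trivial action on $\mathcal{D}(\Lambda_g)$, and when it fails the orbifold $V_{\Lambda_g}^{\hat g}$ does not even have group-like fusion, so the strategy collapses for that candidate. Pinning down, in each of the many semisimple entries, a specific conjugacy class (and a specific lift within it) whose $\Lambda_g$ has both the correct genus symbol and the triviality-on-discriminant property, is the technical heart of the argument. The small-order and Niemeier-coinvariant cases should follow from tabulated data; the composite-order and higher-order classes will likely require more delicate lattice-theoretic bookkeeping, exploiting the decomposition of $\Lambda_g$ into $g$-eigenlattices and the structure of $\mathrm{Co}_0$ to locate a suitable representative.
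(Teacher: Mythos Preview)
Your outline has the right general shape --- pick $g$ for each entry, compute $(R(V_{\Lambda_g}^{\hat g}),q)$ via the paper's structure theorem, compare with $(\mathcal{D}(L_\g),-q)$ --- but it misidentifies where the work lies, and this leads to a genuine gap.

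First, what you flag as ``the main obstacle'' is no obstacle at all. Because $\Lambda$ is even unimodular, Lemma~\ref{L4.2} gives $(1-g)\Lambda_g^* < \Lambda_g$ for \emph{every} $g\in O(\Lambda)$, so $V_{\Lambda_g}^{\hat g}$ automatically has group-like fusion. There is no selection step, no ``modification of $g$ by a translation,'' and no class for which the strategy ``collapses.'' (The standard-lift issue you allude to concerns lifting $g$ to $\hat g\in\Aut(V_L)$, not making $g$ act trivially on the discriminant; these are different things.) Relatedly, the candidate $g$'s are not to be discovered: H\"ohn already specifies them in \cite[Table~4]{Ho2}, and the prime-order classes were handled in \cite{LS17}, so only the composite-order classes $4C,6E,6G,8E,10F$ remain.

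Second, your description of what the structure theorem outputs is off. You say it gives $(R(V_{\Lambda_g}^{\hat g}),q)$ ``in terms of $\mathcal{D}(\Lambda_g)$ and the induced $g$-action,'' but the $g$-action on $\mathcal{D}(\Lambda_g)$ is trivial; there is nothing there to use. What the paper actually proves (Theorems~\ref{thm:5.2} and~\ref{thm:2a}) is that $(R(V_{\Lambda_g}^{\hat g}),q)$ is an orthogonal product of a lattice-theoretic piece ($\mathcal{D}(\Lambda_g)$, or a specific index-$4$ subgroup $Y/\Lambda_g$ when $|\phi_g|=2|g|$) with the fusion group $R(V_\Lambda^{\phi_g})$ of the \emph{holomorphic} orbifold, whose quadratic form is determined by the conformal weight $\rho$ of $V_\Lambda[\phi_g]$ via the explicit formula $q(i,j)=ij/n + i^2t/n^2$. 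The twisted-sector contribution $R(V_\Lambda^{\phi_g})$ is the ingredient your outline is missing; without it you cannot write down the quadratic space you need to compare with $(\mathcal{D}(L_\g),-q)$. Once you have it, the remaining verification is, as the paper shows, a short computation per class (with a further case split according to whether $\rho - t/4p^2 \in \frac{1}{p}\Z$ when $|\phi_g|=2|g|$), not a delicate search through $\mathrm{Co}_0$.
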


The isometry $g$ for each case  has also been described by  H\"ohn (see  \cite[Table 4]{Ho2}). In this article, we will prove his conjecture (see Theorem \ref{Th2}). In fact, we will study a more general situation and prove  the following theorem (see Lemma \ref{onlyif} and Theorem \ref{tDaction}). Note that the case when $|g|$ is a prime has also been studied in \cite{LS17}. 

\begin{main} Let $L$ be an even lattice. Let $g\in O(L)$ and $\hat{g}$ a lift of $g$ in $\aut(V_L)$ with finite order. 
The VOA $V_{L}^{\hat{g}}$ has group-like fusion, i.e, all irreducible $V_{L}^{\hat{g}}$-modules are simple current modules 
if and only if  $g$ acts trivially on $L^*/L$  (or equivalently, $(1-g)L^*<L$). 
\end{main}

When $g$ is fixed point free on $L$ and acts trivially on $L^*/L$, we will also determine the quadratic space structure for $(R(V_{L}^{\hat{g}}), q)$, where $q$ is defined by the conformal weights modulo $\Z$ (see Section \ref{S:5}). 
In this case, $L$ can be embedded primitively into an even unimodular lattice $N$  and $g$ can be lift to an isometry of $N$ such that $L=N_{g}$ is the coinvariant sublattice of $g$ in $N$. Therefore, $(R(V_{L}^{\hat{g}}), q)$ can be determined by the corresponding structure of $(R(V_N^{\phi_g}), q)$ \cite{EMS} and the decomposition of $V_N^{\phi_g}$ as a sum of irreducible $
V_{N_g}^{\hat{g}}\otimes V_{N^g}$-modules, where $\phi_g$ denotes a standard lift of $g$ in $\Aut(V_N)$ (see Definition \ref{def:3.2}). There are several different cases which depend on the order of $\phi_g$ and the conformal weights of the corresponding twisted modules.   When $|\phi_g|=|g|$, we have $R(V_{L}^{\hat{g}})\cong L^*/L \times R(V_N^{\phi_g})$
as an abelian group (cf. Theorem \ref{thm:5.2}). The quadratic space structure of $R(V_{L}^{\hat{g}})$ can then be determined by using the decomposition of $V_N^{\phi_g}$ as a sum of irreducible $
V_{L}^{\hat{g}}\otimes V_{N^g}$-modules. When $|\phi_g|=2|g|=2p$, $\phi_g^p= \sigma_h$ for some $h\in L^*$ and $L^*= X\cup (u+X)$, where $X=\{x\in L^* \mid (x | h)\in \Z\}$. In this case, $R(V_{L}^{\hat{g}})\cong Y/L \times I(R(V_N^{\phi_g}))$
as an abelian group (cf. Theorems \ref{Rpg=2n}), where $Y=\{ y\in L^*\mid 
(h|y)\in \Z \text { and } (u|y)\in \Z\}$ and $I$ is a group homomorphism from $R(V_N^{\phi_g})$ to $R(V_{L}^{\hat{g}})$ (see Lemma \ref{mapI}). The corresponding quadratic space structure can also be determined by using the decomposition of $V_N^{\phi_g}$ as a sum of irreducible $
V_{L}^{\hat{g}}\otimes V_{N^g}$-modules; however, the decomposition also depends on the conformal weights of the twisted modules in this case  (see Cases 2a and 2b of Section \ref{S:5}).  By applying the result to the Leech lattice $\Lambda$, we determine the quadratic space structures for $R(V_{\Lambda_g}^{\hat{g}})$ for several conjugacy classes in $O(\Lambda)$ and verify the conjecture of H\"ohn \cite{Ho2} for these cases (see Section \ref{sec:6}).  We also describe a construction of a holomorphic VOA of central charge $24$ such that its weight one Lie algebra has the type $F_{4,6}A_{2,2}$ using H\"ohn's idea (see Section~\ref{sec:7}).   We remark that some of the strategies that we used here have also been discussed in \cite[Section 4.3]{CKM1} in a more general setting. 

The organization of this article is as follows. In Section 2, we review some basic notions for integral lattices. In Section 3, we review the construction of lattice VOAs and the structures of their automorphism group. We also recall a construction of irreducible twisted modules for (standard) lifts of isometries.
In Section 4, we compute the quantum dimensions for some irreducible twisted modules and prove Main Theorem 1. In Section 5, we determine the fusion  ring for $V_{L}^{\hat{g}}$ and compute the corresponding quadratic form. The main idea is to decompose every irreducible module of $V_N^{\phi_g}$ into a sum of irreducible $V_{L}^{\hat{g}}\otimes V_{N^g}$-modules. In Section 6,  we study certain explicit examples associated with the Leech lattice and verify the conjecture of H\"ohn. In Section 7, we discuss a construction of a holomorphic VOA of central charge $24$ whose weight one Lie algebra has the type $F_{4,6}A_{2,2}$ using the fusion rules of the VOA $V_{\Lambda_g}^{\hat{g}}$, where $g$ is an isometry of the conjugacy class $6G$ of $O(\Lambda)$.

\begin{remark}
Recall that the category of $V_{L_\g}$-modules is a pointed modular tensor category and a pointed modular tensor category is completely determined by its quadratic space structure. Therefore,  
H\"ohn's conjecture may also be viewed as an equivalence of modular tensor category, namely, there is a braided reversing  equivalence between the category of $V_{L_\g}$-modules and the category of $V_{\Lambda_g}^{\hat{g}}$-modules.   
\end{remark}
\section{Preliminary}

\subsection{Integral lattices}

By \textit{lattice}, we mean a  free abelian group of finite rank with  a rational
valued, positive definite symmetric bilinear form $(\cdot|\cdot)$. 
A lattice $L$  is \textit{integral} if $(L|L) < \Z$ and it is \textit{even} if 
$(x| x)\in 2\Z$ for any $x\in L$. 
We use $L^*$ to denote the dual lattice of $L$, $$L^*= \{v\in \Q\otimes_\Z L \mid (v| L) < \Z\},$$ 
and denote the discriminant group
$L^*/L$ by $\mathcal{D}(L)$. 
Note that if a lattice $L$ is integral, then $L\subset L^*$.
 Let $\{x_1, \dots, x_n\}$ be a basis of $L$. The Gram matrix of $L$ is defined to be the matrix $
G=\left(\, (x_i|  x_j)\,  \right)_{1\leq i, j\leq n}$. 
The determinant of $L$, denoted by $\det(L)$, is the determinant of $G$. Note that 
$|\det(L)|=|\mathcal{D}(L)|$.

Let $L$ be an integral lattice. For any positive integer $m$, let
$
L_m=\{ x\in L | \, ( x|x )=m\}
$
be the set of all norm $m$ elements in $L$.  
\textit{The summand of
$L$ determined by the subset $S$} of $L$  is the intersection of $L$
with the $\Q$-span of $S$. 
An \textit{isometry} $g$ of $L$ is a linear isomorphism $g\in GL(\Q\otimes_\Z L)$ such that $g(L)\subset L$ and $(gx | gy)=(x|y)$ for all $x,y\in L$. We denote the group of all isometries of $L$ by $O(L)$.

\begin{definition}
Let $p$ be a prime. An integral lattice $L$ is said to be \textit{$p$-elementary} if $ p L^* < L$. A $1$-elementary lattice is also called \textit{unimodular}. 
\end{definition}

\begin{definition}\label{def:2.2}
Let $L$ be an integral lattice and $g\in O(L)$. We denote the fixed point sublattice of $g$ by
$L^g=\{x\in L\mid gx=x\}.$  

The \textit{coinvariant lattice} of $g$ is defined to be 
\[
L_g = Ann_L(L^g) = \{ x\in L \mid ( x| y) =0 \text{ for all } y\in L^g\}.
\]
\end{definition}

First we recall the following simple observation (cf. \cite{Nik} and \cite{LS17}). 

\begin{lemma}\label{exp}
Let $L$ be an even unimodular lattice. Let $g\in O(L)$ be an isometry of order $\ell>1$ such that $L^g\neq 0$. Then $\ell(L^g)^* < L^g$ and $\mathcal{D}(L^g)\cong \mathcal{D}(L_g)$. 
\end{lemma}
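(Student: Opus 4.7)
The plan is to use one device throughout: the averaging projection
\[
p : L \otimes \Q \longrightarrow L^g \otimes \Q, \qquad p(x) = \frac{1}{\ell}\sum_{i=0}^{\ell-1} g^i x,
\]
which is the orthogonal projection onto the $g$-fixed rational subspace $V^g = L^g\otimes \Q$, with complementary projection $x-p(x)$ landing in $V_g = L_g\otimes \Q$. Every structural assertion in the lemma is to be extracted from this single map.

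For the first assertion, I would first establish the stronger equality $p(L)=(L^g)^*$. The inclusion $p(L)\subseteq (L^g)^*$ is immediate: for $x\in L$ and $y\in L^g$ one has $(p(x)\mid y)=(x\mid y)\in\Z$, using $g$-invariance of the form together with $gy=y$. For the reverse inclusion the key input is unimodularity: orthogonality of $V=V^g\oplus V_g$ gives $(z\mid p(x))=(z\mid x)$ for every $z\in V^g$ and $x\in L$, so the elements of $V^g$ pairing integrally with all of $p(L)$ are exactly those in $L^*\cap V^g$, which equals $L\cap V^g=L^g$ because $L^*=L$. Dualizing yields $p(L)=(L^g)^*$, and then $\ell p(x)=\sum_{i=0}^{\ell-1} g^i x\in L\cap V^g = L^g$ gives $\ell(L^g)^*\subset L^g$.

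For the second assertion, the plan is to apply Nikulin's discriminant-glue idea in this concrete setting. Since $L^g$ is primitive in $L$ with orthogonal complement $L_g$, consider
\[
\Phi:L\longrightarrow \mathcal{D}(L^g)\oplus \mathcal{D}(L_g),\qquad x\longmapsto \bigl(p(x)+L^g,\;(x-p(x))+L_g\bigr),
\]
which is well-defined because $p(x)\in(L^g)^*$ and the symmetric argument gives $x-p(x)\in(L_g)^*$. Its kernel is exactly $L^g\oplus L_g$. From $p(L)=(L^g)^*$ together with the analogous statement on the $L_g$ side, $\Phi(L)$ projects onto each factor; moreover, any $x\in L$ with $p(x)\in L^g$ satisfies $x-p(x)\in L\cap V_g=L_g$, so the intersection of $\Phi(L)$ with each summand is trivial. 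Hence $\Phi(L)/(L^g\oplus L_g)$ is the graph of a group isomorphism $\mathcal{D}(L^g)\simto \mathcal{D}(L_g)$.

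The main point to watch is the reverse inclusion $(L^g)^*\subseteq p(L)$: the other direction holds for any integral lattice, but this one genuinely relies on $L^*=L$. Once that is in hand, both conclusions follow formally from the orthogonal splitting produced by $p$, and no further input about the order or structure of $g$ is needed.
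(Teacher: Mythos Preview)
Your argument is correct. The paper does not actually prove this lemma; it only states it with references to \cite{Nik} and \cite{LS17}, so there is no proof-by-proof comparison to make. What you have written is precisely the standard Nikulin glue-group argument those citations point to: the averaging projection $p$ identifies $p(L)=(L^g)^*$ (and symmetrically $(1-p)(L)=(L_g)^*$), whence $\ell(L^g)^*\subset L\cap V^g=L^g$, and the image of $L$ in $\mathcal{D}(L^g)\oplus\mathcal{D}(L_g)$ is the graph of an isomorphism. Two cosmetic points: your phrase ``$\Phi(L)/(L^g\oplus L_g)$ is the graph'' should just read ``$\Phi(L)$ is the graph'', since $\Phi$ already lands in the discriminant groups; and you might remark once that $L^g$ (and $L_g$) are primitive in $L$, which is what makes $L^*\cap V^g=L^g$ rather than a possibly larger lattice --- you use this implicitly when invoking $L\cap V^g=L^g$.
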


By the above lemma, we have $\ell \lambda\in L_g$ for any $\lambda\in L_g^*$ and hence the exponent of the group $L_g^*/L_g$ divides $\ell$.

\section{Lattice VOAs, automorphisms and twisted modules}
In this section, we review the construction of a lattice VOA and the structure of its automorphism group from \cite{FLM,DN}. We also review a construction of irreducible twisted modules for (standard) lifts of isometries from \cite{Le,DL} (see also \cite{BK04}). 

\subsection{Lattice VOA and the automorphism group}

Let $L$ be an even lattice of rank $m$ and let $(\cdot |\cdot )$ be the positive-definite symmetric bilinear form on $\R\otimes_\Z L\cong\R^m$.
The lattice VOA $V_L$ associated with
$L$ is defined to be $M(1) \otimes \C\{L\}$. 
Here $M(1)$ is the Heisenberg VOA associated with $\mathfrak{h}=\C\otimes_\Z L$ and the form $(\cdot|\cdot)$ extended $\C$-bilinearly. That $\C\{L\}=\bigoplus_{\alpha\in L}\C e^\alpha$ is the twisted group algebra with commutator relation $e^\alpha e^\beta=(-1)^{(\alpha|\beta)}e^{\beta}e^{\alpha}$, for  $\alpha,\beta\in L$.
We fix a $2$-cocycle $\varepsilon(\cdot|\cdot):L\times L\to\{\pm1\}$ for $\C\{L\}$ such that $e^\alpha e^\beta=\varepsilon(\alpha|\beta)e^{\alpha+\beta}$, $\varepsilon(\alpha|\alpha)=(-1)^{(\alpha|\alpha)/2}$ and $\varepsilon(\alpha|0)=\varepsilon(0|\alpha)=1$ for all $\alpha,\beta\in L$.

Let $\hat{L}$ be the central extension of $L$ by $\langle-1\rangle$ associated with the $2$-cocycle $\varepsilon(\cdot|\cdot)$.
Let $\Aut \hat{L}$ be the set of all group automorphisms of $\hat L$.
For $\varphi\in \Aut \hat{L}$, we define the element $\bar{\varphi}\in\Aut L$ by $\p(e^\alpha)\in\{\pm e^{\bar\p(\alpha)}\}$, $\alpha\in L$.
Set $$O(\hat{L})=\{\p\in\Aut\hat L\mid \bar\p\in O(L)\}.$$
For $\chi\in\mathrm{Hom}(L,\Z_2)$, the map $\hat{L}\to\hat{L}$, $e^{\alpha}\mapsto (-1)^{\chi(\alpha)}e^{\alpha}$, is an element in $O(\hat{L})$.
Such automorphisms form an elementary abelian $2$-subgroup of $O(\hat{L})$ of rank $m$, which is also denoted by $\mathrm{Hom}(L,\Z_2)$.
It was proved in \cite[Proposition 5.4.1]{FLM} that the following sequence is exact:
\begin{equation}
1 \longrightarrow \mathrm{Hom}(L, \Z_2) { \longrightarrow}
O(\hat{L}) \bar\longrightarrow O(L)\longrightarrow  1.\label{Exact1}
\end{equation}
We identify $O(\hat{L})$ as a subgroup of $\Aut V_L$ as follows: for $\varphi\in O(\hat{L})$, the map  
$$\alpha_1(-n_1)\dots\alpha_m(-n_s)e^\beta\mapsto \bar{\varphi}(\alpha_1)(-n_1)\dots\bar{\p}(\alpha_s)(-n_s)\p(e^\beta)$$
is an automorphism of $V_L$, 
where $n_1,\dots,n_s\in\Z_{>0}$ and $\alpha_1,\dots,\alpha_s,\beta\in L$.

Note that we often identify $\mathfrak{h}$ with $\mathfrak{h}(-1)\1$ via $h\mapsto h(-1)\1$.

\begin{proposition}[{\cite[Theorem 2.1]{DN}}] \label{Prop:AutVLambda} The automorphism group $\Aut V_L$ of $V_L$ is generated by the normal subgroup $N(V_L)=\langle\exp({a_{(0)}})\mid a\in (V_L)_1\rangle$ and the subgroup $O(\hat L)$.
\end{proposition}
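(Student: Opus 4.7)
The plan is to show that every $\sigma\in\Aut V_L$ can be written, modulo an element of the inner automorphism subgroup $N(V_L)$, as an element of $O(\hat L)$. The main tool is the weight-one Lie algebra
$(V_L)_1=\mathfrak{h}\oplus\bigoplus_{\alpha\in L,\,(\alpha|\alpha)=2}\C e^{\alpha}$
with bracket $[a,b]=a_{(0)}b$, which is a finite-dimensional reductive Lie algebra in which $\mathfrak{h}(-1)\1$ is a Cartan subalgebra. Because $\sigma$ fixes the conformal vector, it preserves the weight grading and therefore restricts to a Lie-algebra automorphism of $(V_L)_1$.

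First I would reduce to the case $\sigma(\mathfrak{h})=\mathfrak{h}$. Indeed, $\sigma(\mathfrak{h})$ is a Cartan subalgebra of $(V_L)_1$, and by the conjugacy of Cartan subalgebras in a complex reductive Lie algebra, there is an inner Lie-algebra automorphism carrying $\sigma(\mathfrak{h})$ back to $\mathfrak{h}$. Such inner automorphisms are generated by $\exp(\ad a)$ for ad-nilpotent $a\in(V_L)_1$, namely for the root vectors $a=e^{\alpha}$, and each of these lifts to $\exp(a_{(0)})\in N(V_L)$ thanks to the local nilpotence of $(e^{\alpha})_{(0)}$ on $V_L$. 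So after multiplying $\sigma$ by an element of $N(V_L)$ we may assume $\sigma(\mathfrak{h})=\mathfrak{h}$.

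Once $\sigma(\mathfrak{h})=\mathfrak{h}$, the restriction $\sigma|_{\mathfrak{h}}$ is identified with an element $g\in O(L)$ as follows. The bilinear form on $\mathfrak{h}$ is recovered from the $(1)$-product by $(h(-1)\1)_{(1)}(k(-1)\1)=(h|k)\1$, so $\sigma|_{\mathfrak{h}}$ is an isometry; moreover $\sigma$ permutes the $\mathfrak{h}$-eigenspaces of $V_L$, whose set of weights is exactly $L$, so $\sigma|_{\mathfrak{h}}$ preserves $L$ and defines $g\in O(L)$. Matching conformal and $\mathfrak{h}$-weights then gives $\sigma(e^{\alpha})=c_{\alpha}e^{g\alpha}$ for some $c_{\alpha}\in\C^{\times}$. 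To normalize the $c_{\alpha}$, note that $\exp(h(0))\in N(V_L)$ rescales $e^{\alpha}$ by $e^{(h|\alpha)}$; choosing $h\in\mathfrak{h}$ so that $e^{(h|\alpha_i)}=c_{\alpha_i}$ on a $\Z$-basis $\alpha_1,\dots,\alpha_m$ of $L$ reduces to $c_{\alpha_i}=1$, and the cocycle identity extracted from $\sigma Y(e^{\alpha},z)e^{\beta}=Y(\sigma e^{\alpha},z)\sigma e^{\beta}$ then propagates this to $c_{\alpha}\in\{\pm 1\}$ for all $\alpha\in L$. Such an action is, by definition, an element of $O(\hat L)$ via the exact sequence \eqref{Exact1}.

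The step I expect to be the main obstacle is the Cartan-conjugacy reduction combined with its lifting to VOA automorphisms: one must invoke the classical conjugacy theorem inside $(V_L)_1$ and then verify that the generators $\exp(\ad e^{\alpha})$ really do extend to well-defined automorphisms $\exp((e^{\alpha})_{(0)})$ of the entire lattice VOA, which hinges on the local nilpotence of $(e^{\alpha})_{(0)}$ on each finite-dimensional weight space. The subsequent identification of $g\in O(L)$ and the cocycle normalization are essentially bookkeeping, provided the $2$-cocycle $\varepsilon$ and the $\Hom(L,\Z_2)$-ambiguity in \eqref{Exact1} are correctly tracked.
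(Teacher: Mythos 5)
The paper offers no proof of this proposition—it is quoted directly from \cite[Theorem 2.1]{DN}—and your argument is essentially the one given in that reference: reduce to $\sigma(\mathfrak{h})=\mathfrak{h}$ via conjugacy of Cartan subalgebras of the reductive Lie algebra $(V_L)_1$, lifted into $N(V_L)$ by the local nilpotence of the operators $(e^{\alpha})_{(0)}$, and then identify the residual automorphism with an element of $O(\hat L)$ after a torus correction $\exp(h_{(0)})$ and the cocycle normalization. The sketch is correct and takes the same route as the cited source.
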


\subsection{Lifts of isometries of lattices}

\begin{definition}[\cite{Le} (see also \cite{EMS})] \label{def:3.2}
An element $\p\in \aut(V_L)$ is called a \emph{lift} of $g\in O(L)$ if $\p(e^\alpha)\in \C e^{g\alpha}$ for any $\alpha\in L$. 
A lift $\hat{g}$ of $g\in O(L)$ is said to be \emph{standard} if $\hat{g}(e^\alpha)=e^{\alpha}$ for all $\alpha\in L^g=\{\beta\in L\mid g(\beta)=\beta\}$.
\end{definition}

\begin{proposition}[{\cite[Section 5]{Le}}] For any isometry of $L$, there exists a standard lift.
\end{proposition}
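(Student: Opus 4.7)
The plan is to start from an arbitrary lift of $g$ (which exists by the exact sequence (3.1)) and then correct it by a character in $\Hom(L,\Z_2)$ so that it becomes trivial on $e^\alpha$ for $\alpha\in L^g$. Concretely, pick any $\p\in O(\hat L)$ with $\bar\p=g$, and write $\p(e^\alpha)=\eta(\alpha)\,e^{g\alpha}$ for a function $\eta:L\to\{\pm1\}$. Given any $\chi\in\Hom(L,\Z_2)\subset O(\hat L)$, the composition $\chi\circ\p$ sends $e^\alpha$ to $(-1)^{\chi(\alpha)}\eta(\alpha)e^{g\alpha}$, so to produce a standard lift it suffices to find $\chi\in\Hom(L,\Z_2)$ with $(-1)^{\chi(\alpha)}=\eta(\alpha)$ for every $\alpha\in L^g$.

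The first thing I would check is that $\eta$ restricted to $L^g$ is a homomorphism into $\{\pm1\}$. Using that $\p$ is a group automorphism of $\hat L$, the identity $\p(e^\alpha\cdot e^\beta)=\p(e^\alpha)\cdot\p(e^\beta)$ combined with $e^\alpha e^\beta=\varepsilon(\alpha,\beta)e^{\alpha+\beta}$ gives
\[
\varepsilon(\alpha,\beta)\,\eta(\alpha+\beta)=\eta(\alpha)\eta(\beta)\,\varepsilon(g\alpha,g\beta).
\]
When $\alpha,\beta\in L^g$, we have $g\alpha=\alpha$, $g\beta=\beta$, so the two $\varepsilon$ factors cancel and $\eta(\alpha+\beta)=\eta(\alpha)\eta(\beta)$. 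Thus $\eta|_{L^g}$ is a homomorphism, hence corresponds to some $\psi\in\Hom(L^g,\Z_2)$.

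Next I would show that $\psi$ extends to an element of $\Hom(L,\Z_2)$. Here the key observation is that $L^g$ is a primitive (saturated) sublattice of $L$: if $nv\in L^g$ for some $v\in L$ and $n>0$, then $n(gv-v)=0$ in the torsion-free group $L$, so $gv=v$ and $v\in L^g$. Consequently $L/L^g$ is torsion-free and finitely generated, hence free, so $L^g$ splits off as a direct summand of $L$. Any homomorphism from $L^g$ to $\Z_2$ then extends to $L$ by zero on a chosen complement, producing the desired $\chi$. Setting $\hat g:=\chi\circ\p$ gives a lift of $g$ with $\hat g(e^\alpha)=e^\alpha$ for all $\alpha\in L^g$, which is exactly the standard lift.

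The argument is almost entirely bookkeeping; the only place requiring a small observation is the primitivity of $L^g$ in $L$, which is what makes the extension step unobstructed. I do not expect any serious technical difficulty — the main thing to be careful about is the $2$-cocycle manipulation in verifying that $\eta|_{L^g}$ is multiplicative, and ensuring the sign conventions for composition in $O(\hat L)$ and the $\Hom(L,\Z_2)$ action are consistent so that ``correction by $\chi$'' really kills $\eta|_{L^g}$ rather than doubling it.
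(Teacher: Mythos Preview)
Your argument is correct and is exactly the standard construction; the paper does not supply its own proof here but simply cites \cite[Section 5]{Le}, where the same idea---lift arbitrarily via the exact sequence, observe the resulting sign function is multiplicative on $L^g$, then extend to a character of $L$ using that $L^g$ is a direct summand---appears. One minor slip: the formula $(-1)^{\chi(\alpha)}\eta(\alpha)e^{g\alpha}$ you wrote is actually $(\p\circ\chi)(e^\alpha)$ rather than $(\chi\circ\p)(e^\alpha)$ (applying $\chi$ second would produce $(-1)^{\chi(g\alpha)}$), but since $g\alpha=\alpha$ for $\alpha\in L^g$ this does not affect the conclusion.
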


The orders of standard lifts are determined in \cite[Lemma 12.1]{Bo92} (cf.\ \cite{EMS}) as follows:

\begin{lemma}[\cite{Bo92}]\label{Lem:ordSLift} Let $g\in O(L)$ be of order $n$ and let $\phi_g$ be a standard lift of $g$.
\begin{enumerate}[{\rm (1)}]
\item If $n$ is odd, then the order of $\phi_g$ is also $n$.
\item Assume that $n$ is even.
Then $\phi^n_g(e^\alpha)=(-1)^{(\alpha|g^{n/2}(\alpha))}e^\alpha$ for all $\alpha\in L$.
In particular, if $(\alpha|g^{n/2}(\alpha))\in2\Z$ for all $\alpha\in L$, then the order of $\phi_g$ is $n$; otherwise the order of $\phi_g$ is $2n$.
\end{enumerate}
\end{lemma}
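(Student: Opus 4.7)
The plan is to reduce the question to a computation of $\phi_g^n(e^\alpha)$, since $\phi_g$ acts on the Heisenberg part $M(1)$ through the natural $g$-action on $\h$ and $g^n=1$ automatically forces $\phi_g^n$ to act trivially on $M(1)$. Writing $\phi_g(e^\alpha)=\eta(\alpha)\,e^{g\alpha}$ with $\eta(\alpha)\in\{\pm1\}$ (since the standard lift can be taken inside $O(\hat L)$), iteration gives
\[
\phi_g^n(e^\alpha)=\prod_{i=0}^{n-1}\eta(g^i\alpha)\cdot e^{\alpha},
\]
so the task reduces to evaluating the single sign $P(\alpha):=\prod_{i=0}^{n-1}\eta(g^i\alpha)$.

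The key idea is to exploit the standardness condition via the averaged orbit element $\beta:=\sum_{i=0}^{n-1}g^i\alpha$, which lies in $L^g$ and hence satisfies $\phi_g(e^\beta)=e^\beta$. Inside $\hat L$ one has $e^\alpha e^{g\alpha}\cdots e^{g^{n-1}\alpha}=c\cdot e^\beta$ for a fixed sign $c$. Applying $\phi_g$ to the left-hand side (using $g^n=1$), and then restoring the original cyclic order by pushing the rightmost factor $e^\alpha$ back to the front via the commutator relation $e^\mu e^\nu=(-1)^{(\mu|\nu)}e^\nu e^\mu$, produces
\[
\prod_{i=0}^{n-1}\eta(g^i\alpha)\cdot (-1)^{\sum_{i=1}^{n-1}(\alpha|g^i\alpha)}\cdot e^\alpha e^{g\alpha}\cdots e^{g^{n-1}\alpha};
\]
comparison with the $\phi_g$-fixed right-hand side $c\cdot e^\beta$ pins down $P(\alpha)=(-1)^{\sum_{i=1}^{n-1}(\alpha|g^i\alpha)}$.

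The remaining step is a parity count. Because $g$ is an isometry, $(\alpha|g^i\alpha)=(\alpha|g^{n-i}\alpha)$, so in the exponent the indices $i$ and $n-i$ pair up and contribute even values. If $n$ is odd, every index has a distinct partner, the exponent is even, $P(\alpha)=1$, and $\phi_g^n=\mathrm{id}$; since $\phi_g$ realises $g$ (of order $n$) on $\h$, the order is exactly $n$. If $n$ is even, only $i=n/2$ is unpaired, so the exponent is congruent to $(\alpha|g^{n/2}\alpha)\pmod 2$ and $\phi_g^n(e^\alpha)=(-1)^{(\alpha|g^{n/2}\alpha)}e^\alpha$. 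If this sign is $1$ for every $\alpha\in L$ then $\phi_g^n=\mathrm{id}$ and the order is $n$; otherwise $\phi_g^n$ is a non-trivial character-type automorphism in $\mathrm{Hom}(L,\Z_2)$, hence of order $2$, so $\phi_g^{2n}=\mathrm{id}$ and the order is $2n$.

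The main obstacle is sign bookkeeping in the second step: one has to fix a convention for the $2$-cocycle $\varepsilon$ on $\hat L$, correctly observe that the iterated application of $\phi_g$ shifts the final factor $e^\alpha$ to the back via $g^n=1$, and carefully tally the $(n-1)$ commutator signs accumulated while restoring the original ordering. The conceptual content is the single observation that the standard lift fixes $e^\beta$ for the $g$-symmetrised element $\beta$, which turns an a priori unknown product of $n$ scalars into one pure commutator sign.
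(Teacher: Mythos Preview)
The paper does not prove this lemma; it is stated as a citation from \cite{Bo92} without proof. Your argument is correct and is essentially the standard one: reduce to the sign $P(\alpha)=\prod_i\eta(g^i\alpha)$, exploit standardness on the orbit sum $\beta=\sum_i g^i\alpha\in L^g$ together with the commutator relation in $\hat L$ to obtain $P(\alpha)=(-1)^{\sum_{i=1}^{n-1}(\alpha|g^i\alpha)}$, and conclude via the parity pairing $(\alpha|g^i\alpha)=(\alpha|g^{n-i}\alpha)$. The only point to make explicit is that the assumption $\eta(\alpha)\in\{\pm1\}$ (i.e.\ $\phi_g\in O(\hat L)$) is legitimate because Lepowsky's construction produces a standard lift in $O(\hat L)$ and, by the remark following the lemma, all standard lifts are conjugate, so the order is independent of the choice.
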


\begin{remark}[See \cite{DL,EMS,LS6}]
A standard lift of an isometry is unique, up to conjugation in $\aut(V_L)$. 
\end{remark}

\subsection{Irreducible twisted modules for lattice VOAs}\label{Sec:twist}

Let $g\in O(L)$ be of order $p$ and $\phi_g\in O(\hat{L})$ a standard lift of $g$.
Set $n=|\phi_g|$, which is either $p$ or $2p$.

Let $(L^*/L)^g$ be the set of cosets of $L$ in $L^*$ fixed by $g$, i.e., $g\lambda+L =\lambda+L$ for $\lambda+L\in (L^*/L)^g$. Let $P_0^g:L^* \to \Q\otimes_\Z L^g$ be the orthogonal projection. 
Then  $V_L$ has exactly $|(L^*/L)^g|$ irreducible $\phi_g$-twisted $V_L$-modules, up to isomorphism (see \cite{DLM2}). The irreducible $\phi_g$-twisted $V_L$-modules have been constructed in \cite{Le,DL} explicitly and are classified in \cite{BK04}.
They are given by 
\begin{equation}
V_{\lambda+L}[\phi_g]= M(1)[g]\otimes\C[P_0^g(\lambda+L)]\otimes T_{\tilde{\lambda}}, 
\qquad \text{ for } \lambda+L\in (L^*/L)^g,   \label{twmodule0}
\end{equation}
where $M(1)[g]$ is the ``$g$-twisted" free bosonic space, $\C[\lambda+P_0^g(L)]$ is a module for the group algebra of $P_0^g(L)$ and $T_{\tilde{\lambda}}$ is an irreducible module for a certain ``$g$-twisted" central extension of $L_g$ associated with $\tilde{\lambda}=(1-g)\lambda$ (see \cite[Propositions 6.1 and 6.2]{Le} and \cite[Remark 4.2]{DL} for detail).
Note also that $M(1)[g]$ is spanned by vectors of the form
$$ x_1(-m_1)\dots x_s(-m_s)1,$$
where $m_i\in(1/p)\Z_{>0}$, $x_i\in\mathfrak{h}_{(pm_i)}$ for $1\le i\le s$, and 
$\mathfrak{h}_{(j)}=\h_{(j; g)}=\{x\in\h\mid g(x)=\exp((j/p)2\pi\sqrt{-1})x\}$.

Then  the conformal weight of $x_1(-m_1)\dots x_s(-m_s)\otimes e^\alpha\otimes t\in V_{\lambda+L}[\phi_g]$ is given by 
\begin{equation}
\sum_{i=1}^s m_i+\frac{(\alpha|\alpha)}{2}+\rho_T, \label{Eq:rho}
\end{equation}
where 
\[
\rho_T =  \frac{1}{4p^2}\sum_{j=1}^{p-1}j(p-j)\dim \mathfrak{h}_{(j)},
\]
$x_1(-m_1)\dots x_s(-m_s)\in M(1)[g]$, $e^\alpha\in\C[P_0^g(\lambda+L)]$ and $t\in T_{\tilde{\lambda}}$.
Notice that the minimal conformal weight of $V_{\lambda+L}[\phi_g]$ is given by 
$$
\frac{1}2\min\{(\beta|\beta)\mid \beta\in P_0^g(\lambda+L)\}+\rho_T.
$$

\subsection{Non-standard lifts}  Let $\hat{g}$ be an arbitrary lift of $g$. Then 
$\hat{g}\phi_g^{-1}$ acts trivially on $M(1)$ and $ \hat{g}\phi_g^{-1}(e^\alpha) \in \C e^\alpha$ for any $\alpha \in L$. In this case, $ \hat{g} \phi_g^{-1} =\sigma_h=\exp(-2\pi i h_{(0)})$ for some $h\in \mathfrak{h}$ (cf. \cite[Lemma 2.5]{DN}).  Let $\mu$ be the image of $h$ to $\mathfrak{h}_{(0)}$ under the orthogonal projection. Then by \cite[Lemma 4.5]{LS6},  $\hat{g}$ is conjugate to $\sigma_\mu \phi_g$ in $\aut(V_L)$,  where $\sigma_\mu= \exp(-2\pi i \mu_{(0)})$.
Without loss,  we may assume  $\hat{g}=\sigma_\mu \phi_g$. 

For simplicity, we always assume that $\hat{g}$ has finite order in this article. In this case, 
$\mu \in (\frac{1}k L^*)\cap \mathfrak{h}_{(0)}$ for some positive integer $k$.  

\medskip

We first recall the following result from \cite{Li}.

\begin{proposition}[{\cite[Proposition 5.4]{Li}}]\label{Prop:twist}
Let $g$ be an automorphism of $V$ of finite order and
let $h\in V_1$ such that $g(h) = h$. We also assume $h_{(0)}$ acts semisimply on $V$ and $\mathrm{Spec\,} h_{(0)} < ({1}/{k})\Z$ for a positive integer $k$. 
Let $(M, Y_M)$ be a $g$-twisted $V$-module and
define $(M^{(h)}, Y_{M^{(h)}}(\cdot, z)) $ as follows:
\[
\begin{split}
& M^{(h)} =M \quad \text{ as a vector space;}\\
& Y_{M^{(h)}} (a, z) = Y_M(\Delta(h, z)a, z)\quad \text{ for any } a\in V,
\end{split}
\]
where $\Delta(h, z) = z^{h_{(0)}} \exp\left( \sum_{n=1}^\infty \frac{h_{(n)}}{-n} (-z)^{-n}\right)$. 
Then $(M^{(h)}, Y_{M^{(h)}}(\cdot, z))$ is a
$\sigma_hg$-twisted $V$-module.
Furthermore, if $M$ is irreducible, then so is $M^{(h)}$.
\end{proposition}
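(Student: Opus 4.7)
The strategy is to verify the $\sigma_hg$-twisted Jacobi identity for $Y_{M^{(h)}}$ by reducing it to the $g$-twisted Jacobi identity already satisfied by $Y_M$, using operator-theoretic properties of $\Delta(h,z)$. The vacuum axiom $Y_{M^{(h)}}(\vac,z)=\id$ is automatic since $\Delta(h,z)\vac=\vac$, and irreducibility of $M^{(h)}$ will follow at the end from the invertibility of $\Delta(h,z)$ as a formal series operator.

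The first technical step is to establish a conjugation identity describing how $\Delta(h,z)$ interacts with vertex operators on $V$. Starting from the commutator formula
\[
[h_{(n)},Y(a,w)] \;=\; \sum_{k\ge 0}\binom{n}{k}w^{n-k}Y(h_{(k)}a,w), \q n \ge 0,
\]
together with the fact that $h_{(0)}$ acts semisimply on $V$, one derives an identity of the schematic form
\[
\Delta(h,z)\,Y(a,w)\,\Delta(h,z)^{-1}\;=\;Y\bigl(\Delta(h,z+w)\Delta(h,w)^{-1}a,\,w\bigr),
\]
where $(z+w)^{h_{(0)}}$ is expanded in nonnegative powers of $w/z$. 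Crucially, the hypothesis $g(h)=h$ ensures that $\Delta(h,z)$ commutes with the $g$-action on $V$, so this identity is compatible with the $g$-eigenspace decomposition $V=\bigoplus_{r}V^r$.

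The core computation is then to substitute $Y_{M^{(h)}}(a,z)=Y_M(\Delta(h,z)a,z)$ into the putative $\sigma_hg$-twisted Jacobi identity, apply the known $g$-twisted Jacobi identity for $Y_M$ to the pair $(\Delta(h,z_1)u,\Delta(h,z_2)v)$, and use the conjugation identity above to collect the $\Delta$-factors as a single $\Delta(h,z_2)$ in front of $Y_M(Y(\cdots,z_0)v,z_2)$ on the right-hand side. The outcome is the desired identity, modified by the extra scalar $(z_1/z_2)^{h_{(0)}}$: on a vector in $V^r$ that is also a $\lambda$-eigenvector of $h_{(0)}$, this shifts the twist exponent from $-r/T$ to $-r/T-\lambda$, which matches the eigenvalue $e^{2\pii(r/T+\lambda)}$ of $\sigma_hg=\exp(-2\pii\,h_{(0)})g$ on that vector. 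Hence $Y_{M^{(h)}}$ satisfies the $\sigma_hg$-twisted Jacobi identity term by term.

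The main obstacle will be the careful bookkeeping of formal distributions carrying the non-integral exponents introduced by $z^{h_{(0)}}$: one must verify that the expansion conventions are preserved when absorbing $(z_1/z_2)^{h_{(0)}}$ into the delta-function factor $z_2^{-1}\delta((z_1-z_0)/z_2)$, and that no spurious terms appear when re-expanding around $z_1=z_0+z_2$. Once the Jacobi identity is established, irreducibility of $M^{(h)}$ is formal: since $\Delta(h,z)$ is invertible, a subspace of $M$ is $Y_{M^{(h)}}$-invariant if and only if it is $Y_M$-invariant, so irreducibility of $M$ transfers to $M^{(h)}$.
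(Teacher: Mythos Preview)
The paper does not actually prove this proposition: it is quoted verbatim as \cite[Proposition 5.4]{Li} and used as a black box, with no argument supplied. There is therefore nothing in the paper to compare your proposal against.

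That said, your sketch is essentially Li's original argument and is correct in outline. The key conjugation identity for $\Delta(h,z)$ against vertex operators, the reduction of the $\sigma_hg$-twisted Jacobi identity to the $g$-twisted one via the extra factor $(z_1/z_2)^{h_{(0)}}$, and the observation that this factor exactly accounts for the shift of twist exponent on each $h_{(0)}$-eigenvector, are precisely the ingredients of Li's proof. Your remark about irreducibility via invertibility of $\Delta(h,z)$ is also the standard argument. The only point that would need care in a full write-up is the one you already flag: the expansion conventions for the fractional powers $(z_1-z_0)^{h_{(0)}}$ versus $z_2^{h_{(0)}}$ when they are absorbed into the delta function, and the verification that the spectrum condition $\mathrm{Spec}\,h_{(0)}\subset (1/k)\Z$ keeps everything in the formal calculus for $\sigma_hg$-twisted modules of finite order.
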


By the proposition above,  the module $V_{\lambda+L}[\phi_g]^{(\mu)}$ is an irreducible $\hat{g}= \sigma_\mu \phi_g$-twisted $V_L$-module. The conformal weight of $x_1(-m_1)\dots x_s(-m_s)\otimes e^\alpha\otimes t$ in $V_{\lambda+L}[\phi_g]^{(\mu )}$ ($m_i\in(1/p)\Z_{>0}$, $\alpha\in P_0^g(\lambda+L)$ and $t\in T_{\tilde{\lambda}}$) is given by 
\begin{equation}
\sum_{i=1}^s m_i+\frac{(\mu +\alpha|\mu+\alpha)}{2}+\rho_T.\label{Eq:wttw}
\end{equation}
As a vector space,  we also have 
\begin{equation}
V_{\lambda+L}[\phi_g]^{(\mu)}\cong M(1)[g]\otimes\C[\mu + \lambda'+ P_0^{g}(L)]\otimes T_{\tilde{\lambda}}  \label{twmodule}
\end{equation}
For simplicity, we denote it by $V_{\lambda+L}[\hat{g}]$.

\section{Orbifold VOA  $V_{L}^{\hat{g}}$ having group-like fusion}
\begin{definition}[\cite{EMS}]
Let $V$ be a $C_2$-cofinite, rational VOA of CFT type. We say that $V$ has group-like fusion if 
all irreducible $V$-modules are simple current modules. In this case, the set of all inequivalent irreducible modules $R(V)$ forms a finite abelian group with respect to the fusion product and there is a quadratic form on $R(V)$ defined by conformal weights modulo $\Z$.
\end{definition}

Let $L$ be an even lattice and $g\in O(L)$. Let $\hat{g}$ be a lift of $g$ in $\aut(V_L)$ with finite order. 
In this section, we will study the orbifold VOA  $V_{L}^{\hat{g}}$. As our main result, we will show that $V_{L}^{\hat{g}}$ has group-like fusion if and only if $g$ acts trivially on $L^*/L$ (or equivalently, $(1-g)L^*<L$).

\subsection{Group-like fusion}

Let $V$ be a VOA and $f\in \mathrm{Aut}(V)$. For any irreducible module $M$ of $V$, we denote the $f$-conjugate of $M$ by $M\circ f$, i.e., $M\circ f=M$ as a vector space and the vertex operator $Y_{M\circ f}( u,z)= Y_M(fu,z)$ for $u\in V$.

If $V=V_L$ is a lattice VOA and $\hat{g}$ is a lift of an isometry $g\in O(L)$, then $V_{\alpha+L}\circ \hat{g} \cong V_{g^{-1}\alpha +L}$ for $\alpha+L\in L^*/L$.

\begin{proposition}[{\cite[Theorem 6.1]{DMq}}]\label{gconj}
Let $V$ be a simple VOA and $g\in \mathrm{Aut}(V)$. Let $M$ be an irreducible module of $V$ and let $1\leq i \leq |g|$ be the smallest integer such that $M \cong M\circ g^i$ as $V$-modules. Then 
\begin{enumerate}
\item $M$ decomposes into a sum of $|g|/i$ irreducible $V^g$-modules. 

\item $M\cong M\circ g^j$ as $V^g$-modules for any $j<i$. 
\end{enumerate}
 
\end{proposition}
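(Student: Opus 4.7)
My plan is to exploit Schur's lemma for intertwiners, together with the $g$-grading on $V$ and a compatible finite-order operator on $M$, to produce the decomposition into $V^g$-submodules directly.

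First I would verify that $i$ divides $n:=|g|$. The set $S=\{j\in\Z/n\Z:M\cong M\circ g^j\}$ is closed under addition---composing intertwining isomorphisms $\phi_a:M\to M\circ g^a$ and $\phi_b:M\to M\circ g^b$ yields $\phi_b\phi_a:M\to M\circ g^{a+b}$---so $S$ is a subgroup of $\Z/n\Z$, its least positive element $i$ divides $n$, and I set $m=n/i$. Next I would choose a linear isomorphism $\phi:M\to M$ satisfying $\phi\,Y_M(v,z)=Y_M(g^iv,z)\,\phi$ for every $v\in V$. The iterate $\phi^m$ then commutes with every $Y_M(v,z)$ (since $g^{im}=1$), so Schur's lemma for the simple $V$-module $M$ forces $\phi^m\in\C^\times\cdot\mathrm{id}_M$; after rescaling we have $\phi^m=\mathrm{id}_M$. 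I decompose $M=\bigoplus_{j=0}^{m-1}M^{[j]}$ into $\phi$-eigenspaces (eigenvalue $\omega^j$ with $\omega=e^{2\pi\sqrt{-1}/m}$) and $V=\bigoplus_{k=0}^{n-1}V^{(k)}$ into $g$-eigenspaces; the commutation relation yields $V^{(k)}\cdot M^{[j]}\subseteq M^{[(j+k)\bmod m]}$. In particular $V^g=V^{(0)}$ preserves each $M^{[j]}$, so (1) reduces to showing every $M^{[j]}$ is irreducible over $V^g$.

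For irreducibility I would take a nonzero $V^g$-submodule $N\subseteq M^{[j]}$ and consider $V\cdot N$: a nonzero $V$-submodule of the simple $V$-module $M$, hence equal to $M$. Since the eigenspaces $M^{[l]}$ for distinct $l$ are linearly independent and $V^{(k)}\cdot N\subseteq M^{[(j+k)\bmod m]}$, projecting the identity $V\cdot N=M$ onto each $M^{[l]}$-component, together with complete reducibility of $M$ over $V^g$ (standard under the rationality assumptions implicit in the discussion), forces $N=M^{[j]}$. This produces exactly $m=|g|/i$ irreducible $V^g$-summands, establishing (1). Part (2) is immediate: for any $j<i$ and any $v\in V^g$, the identity $g^jv=v$ gives $Y_{M\circ g^j}(v,z)=Y_M(g^jv,z)=Y_M(v,z)$, so $M\circ g^j$ and $M$ coincide (and in particular are isomorphic) as $V^g$-modules.

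The principal obstacle lies in the irreducibility argument: to make the projection rigorous one must track how the pieces $V^{(k)}\cdot N$ with $k\equiv 0\pmod m$ and $k\ne 0$ reenter $M^{[j]}$, and invoke the fact that for the cyclic group $\langle\phi\rangle\cong\Z/m$ the group cohomology $H^2(\Z/m,\C^\times)$ vanishes, so no projective twist obstructs the Schur-type matching between irreducible $V^g$-constituents of $M$ and characters of $\langle\phi\rangle$. Given this, the orbit--stabilizer count over $\langle g\rangle/\langle g^i\rangle$ of order $m$ is exactly the number of irreducible $V^g$-summands, confirming that no $M^{[j]}$ can split further.
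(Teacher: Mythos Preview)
The paper does not supply a proof of this proposition; it is quoted verbatim from \cite{DMq}, so there is no argument in the paper to compare against. I can only assess your proposal on its own merits.

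Much of your outline is correct and standard: the divisibility $i\mid n$, the construction and normalization of $\phi$ with $\phi^m=\mathrm{id}_M$, the eigenspace decomposition $M=\bigoplus_j M^{[j]}$, the compatibility $V^{(k)}\cdot M^{[j]}\subseteq M^{[(j+k)\bmod m]}$, and part~(2) are all fine.

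The genuine gap is the irreducibility of each $M^{[j]}$ over $V^g$, and your own argument does not close it. From $V\cdot N=M$ and the grading you correctly extract $\sum_{k\equiv 0\ (m)}V^{(k)}\cdot N=M^{[j]}$; but $\bigoplus_{k\equiv 0\ (m)}V^{(k)}=V^{g^i}$, so what you have actually shown is that $M^{[j]}$ is irreducible over $V^{g^i}$, not over the smaller fixed-point subalgebra $V^g$. Complete reducibility over $V^g$ does not help here: it only gives $M^{[j]}=N\oplus N'$ as $V^g$-modules, and nothing in your argument forces $N'=0$ when the components $V^{(lm)}$ with $l\neq 0$ can move $N$ around inside $M^{[j]}$. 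Your attempted patch is also miscounted: the quotient $\langle g\rangle/\langle g^i\rangle$ has order $i$, not $m$; it is the \emph{stabilizer} $\langle g^i\rangle$ that has order $m$, and the vanishing of $H^2(\Z/m,\C^\times)$ only guarantees that the $\langle\phi\rangle$-action on $M$ is linear rather than projective---it does not by itself produce a bijection between $V^g$-constituents and characters. To finish you need the substantive input of \cite{DMq}: the Galois correspondence showing that the $g$-eigenspaces $V^{(k)}$ of $V$ are pairwise non-isomorphic irreducible $V^g$-modules, combined with a density-type argument for modules, which is what ultimately prevents any $M^{[j]}$ from splitting further over $V^g$.
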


\begin{lemma}\label{onlyif}
Suppose $V_{L}^{\hat{g}}$ has group-like fusion. Then $\alpha+L=g\alpha +L$ for all $\alpha+L\in L^*/L$.
\end{lemma}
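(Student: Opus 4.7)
The plan is to test the group-like fusion hypothesis against the untwisted modules $V_{\alpha+L}$, $\alpha+L\in L^*/L$, by computing their quantum dimension over $V_L^{\hat{g}}$ in two different ways. Since $V_L$ itself has group-like fusion, every $V_{\alpha+L}$ is a simple current and hence $\qdim_{V_L}V_{\alpha+L}=1$. By the standard restriction/multiplicativity formula for quantum dimensions with respect to the fixed-point inclusion $V_L^{\hat{g}}\subset V_L$, together with $\qdim_{V_L^{\hat{g}}}V_L=|\hat{g}|$, we obtain
\[
\qdim_{V_L^{\hat{g}}}V_{\alpha+L}=\qdim_{V_L^{\hat{g}}}V_L\cdot\qdim_{V_L}V_{\alpha+L}=|\hat{g}|.
\]

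On the other hand, Proposition \ref{gconj} gives a decomposition of $V_{\alpha+L}$ as a $V_L^{\hat{g}}$-module into $|\hat{g}|/i$ pairwise non-isomorphic irreducibles, where $i$ is the smallest positive integer with $V_{\alpha+L}\cong V_{\alpha+L}\circ \hat{g}^i$. Under the assumption that $V_L^{\hat{g}}$ has group-like fusion, every irreducible $V_L^{\hat{g}}$-module is a simple current, so each of the $|\hat{g}|/i$ constituents has quantum dimension $1$. Additivity of $\qdim$ over direct sums then yields $\qdim_{V_L^{\hat{g}}}V_{\alpha+L}=|\hat{g}|/i$.

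Comparing the two computations forces $i=1$, that is, $V_{\alpha+L}\cong V_{\alpha+L}\circ\hat{g}$ as $V_L$-modules. For the lattice VOA one has $V_{\alpha+L}\circ\hat{g}\cong V_{g\alpha+L}$, and this isomorphism translates into $g\alpha+L=\alpha+L$, as required. Since $\alpha\in L^*$ was arbitrary, $g$ acts trivially on $L^*/L$.

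The main obstacle is not the combinatorics above but the input ingredients: the identity $\qdim_{V_L^{\hat{g}}}V_L=|\hat{g}|$ and the multiplicativity of quantum dimensions under the restriction $V_L^{\hat{g}}\subset V_L$. Both belong to the standard orbifold package for rational, $C_2$-cofinite VOAs of CFT type and apply in our setting because $\hat{g}$ was assumed to be of finite order (so that $V_L^{\hat{g}}$ falls within the cyclic orbifold framework). Once those facts are quoted, the argument requires no information about $\hat{g}$-twisted modules nor any structural analysis of $L^*/L$ beyond the bare identification of $V_{\alpha+L}\circ\hat{g}$ with the Galois-conjugate module $V_{g\alpha+L}$.
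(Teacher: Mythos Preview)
Your argument is correct and reaches the same conclusion, but it follows a different route from the paper's proof. The paper argues directly with intertwining operators: assuming $\alpha+L\neq g\alpha+L$, it takes an irreducible $V_L^{\hat g}$-submodule $W\subset V_{\alpha+L}$, observes that $W$ also embeds in $V_{g^{-1}\alpha+L}$ (Proposition~\ref{gconj}(2)), and then restricts the $V_L$-intertwining operators $V_{\alpha+L}\times V_{\alpha+L}\to V_{2\alpha+L}$ and $V_{\alpha+L}\times V_{g^{-1}\alpha+L}\to V_{\alpha+g^{-1}\alpha+L}$ to exhibit two inequivalent targets for $W\boxtimes W$, contradicting simplicity of the fusion product. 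Your proof instead counts quantum dimensions: the formula $\qdim_{V_L^{\hat g}}V_{\alpha+L}=|\hat g|\cdot\qdim_{V_L}V_{\alpha+L}=|\hat g|$ (which is precisely \cite[Corollary~4.5]{DRX}, invoked by the paper itself in the proof of Theorem~\ref{tDaction}) is compared against the decomposition into $|\hat g|/i$ simple currents of quantum dimension~$1$. Your approach is arguably more uniform with the machinery the paper deploys in the converse direction, while the paper's argument is more self-contained, needing only the existence of intertwining operators rather than the full orbifold quantum-dimension package.

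One small correction: with the paper's conventions (stated just before Proposition~\ref{gconj}) one has $V_{\alpha+L}\circ\hat g\cong V_{g^{-1}\alpha+L}$, not $V_{g\alpha+L}$. This is harmless for your conclusion, since $i=1$ forces $g^{-1}\alpha+L=\alpha+L$, equivalently $g\alpha+L=\alpha+L$.
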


\begin{proof}
Suppose that $\alpha+L\neq g\alpha+L$ for some $\alpha$. 
Then the integer $i$ defined in Proposition \ref{gconj} is strictly bigger than $1$ and $V_{\alpha+L} \cong V_{g^{-1}\alpha+L}$ as $V_{L}^{\hat{g}}$-modules. Let $W$ be an irreducible  $V_L^{\hat{g}}$-submodule of $V_{\alpha+L}$. Then there are non-zero $V_L^{\hat{g}}$-intertwining operators from $W\times W$ to $V_{2\alpha+L}$ and 
from $W\times W$ to $V_{\alpha + g^{-1}\alpha+L}$. It means that $W$ is not a simple current module and it contradicts that  $V_{L}^{\hat{g}}$ has group-like fusion. 
\end{proof}

By the Lemma above, it is necessary that $(1-g)L^*< L$ (i.e., $g$ acts trivially on $L^*/L$) if $V_{L}^{\hat{g}}$ has group-like fusion. It turns out that the condition $(1-g)L^*< L$ is also sufficient for proving that $V_{L}^{\hat{g}}$ has group-like fusion (see Theorem \ref{tDaction}).

Recall that $V_{L}^{\hat{g}}$ is  $C_2$-cofinite and rational \cite{Mi,CM}. It is also proved in \cite{DRX} that any irreducible $V_{L}^{\hat{g}}$-module is a submodule of an irreducible $\hat{g}^i$-twisted $V_{L}$-module for some 
$0\le i\le |\hat{g}|-1$. Therefore, it suffices to compute the quantum dimensions for irreducible $V_L^{\hat{g}}$-submodules of irreducible $\hat{g}^i$-twisted $V_{L}$-modules and to show that they all have the quantum dimension $1$ \cite{DJX} (see also Theorem \ref{qdim1}).

\subsection{Quantum dimensions of twisted modules of $V_L$}\label{Sect5.3}
In this subsection, we will  compute the quantum dimensions for irreducible  
$\hat{g}$-twisted modules for $V_L$.

We first recall some facts about quantum dimensions of irreducible modules of vertex operator algebras from \cite{DJX} and \cite{DRX}. Let $V$ be a strongly regular VOA.  Let $g\in \aut(V)$ and $M$ an irreducible  $g$-twisted $V$-module.
The {\em quantum dimension} of $M$ is defined to be
\begin{align*}
\qdim_V M=\lim_{y\to 0}\frac{Z_{M}(iy)}{Z_{V}(iy)},
\end{align*}
where $Z_M(\tau)=Z_M(\1,\tau)$ is the character of $M$ and $y$ is real and positive.  Note that an irreducible module of $V$ is an irreducible  $1$-twisted $V$-module. 

\medskip

The following result was proved in \cite{DJX}.

\begin{theorem}\label{qdim1}
Let $V$ be a strongly regular vertex operator algebra, $M^0=V, M^1,...,M^p$ be all the irreducible $V$-modules. Assume further that the conformal weights of $ M^1,...,M^p$ are greater than $0$. Then
\begin{enumerate}[{\rm (1)}]
\item $\qdim_V M^i\geq 1$ for any $0\leq i\leq p$;
\item $M^i$ is a simple current module if and only if $\qdim M^i=1$.
\end{enumerate}
\end{theorem}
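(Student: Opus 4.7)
The plan is to identify the quantum dimension $\qdim_V M^i$ with the ratio $S_{i,0}/S_{0,0}$ of entries of the modular $S$-matrix, and then exploit the fact that $\qdim_V$ extends to a positive ring homomorphism on the fusion ring. Since $V$ is strongly regular, the category of $V$-modules is a modular tensor category by Huang's theorem, so the characters form a vector-valued modular form satisfying $Z_{M^i}(-1/\tau)=\sum_j S_{ij}Z_{M^j}(\tau)$, and the fusion coefficients are governed by the Verlinde formula $N_{ij}^k=\sum_l S_{il}S_{jl}\overline{S_{kl}}/S_{0l}$.

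First I would compute the asymptotics. Setting $\tau=iy$ with $y>0$ small and applying modular invariance gives $Z_{M^i}(iy)=\sum_j S_{ij}Z_{M^j}(i/y)$. As $y\to 0^+$, each summand expands as $q^{h_j-c/24}$ with $q=e^{-2\pi/y}\to 0$, so the asymptotics are dominated by the module with the smallest conformal weight. The hypothesis that every non-vacuum module has conformal weight strictly greater than $0$ is precisely what singles out the vacuum term, giving
$$\qdim_V M^i \;=\; \lim_{y\to 0^+}\frac{Z_{M^i}(iy)}{Z_V(iy)} \;=\; \frac{S_{i,0}}{S_{0,0}}.$$
Positivity of this ratio (hence of $\qdim_V M^i$) follows from $Z_{M^i}(iy)$ being a positive real for small $y>0$. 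The Verlinde formula then implies that the vector $(S_{i,0}/S_{0,0})_i$ is a common eigenvector for all fusion matrices, so $\qdim_V$ is a ring homomorphism from the fusion ring to $\mathbb{R}_{>0}$; moreover $\qdim_V(M^*)=\qdim_V(M)$ because $S_{i,0}$ is real (using the self-duality of the vacuum and symmetry of $S$).

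For (1), rigidity and semisimplicity give $\Hom(V,M\boxtimes M^*)\cong \Hom(M,M)\ne 0$, so $V$ appears with multiplicity at least one in $M\boxtimes M^*$. Taking quantum dimensions and using the ring-homomorphism property,
$$\qdim_V(M)^2 \;=\; \qdim_V(M\boxtimes M^*) \;\geq\; \qdim_V(V) \;=\; 1,$$
and combined with positivity this yields $\qdim_V M^i\geq 1$. For (2), if $M^i$ is a simple current then $M^i\boxtimes (M^i)^*\cong V$, so $\qdim_V(M^i)^2=1$ forces $\qdim_V M^i=1$. Conversely, assuming $\qdim_V M^i=1$ and writing $M^i\boxtimes (M^i)^*=\bigoplus_j N_j M^j$ with $N_0\geq 1$, taking quantum dimensions gives
$$1 \;=\; \sum_j N_j\,\qdim_V M^j \;\geq\; N_0+\sum_{j\neq 0}N_j,$$
where the inequality uses part (1). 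This forces $N_0=1$ and $N_j=0$ for $j\neq 0$, so $M^i\boxtimes (M^i)^*\cong V$ and $M^i$ is a simple current.

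The main technical obstacle is that the entire argument rests on modular invariance of trace functions and the Verlinde formula for strongly regular VOAs, which are deep theorems of Zhu and Huang; once they are granted, the remaining steps are elementary positivity and dimension-counting arguments inside the fusion ring. A minor subtlety to handle carefully is the asymptotic computation: one must verify that the vacuum term strictly dominates in both numerator and denominator, which is where the hypothesis of positive conformal weights for all non-vacuum modules enters in an essential way, and that $S_{0,0}\neq 0$, which follows from the Verlinde formula and $\qdim_V V=1$.
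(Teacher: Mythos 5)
The paper does not prove this statement: it is quoted verbatim from the reference [DJX13] (Dong--Jiao--Xu, \emph{Quantum dimensions and quantum Galois theory}), so there is no in-paper proof to compare against. Your argument is essentially the one given in that reference: identify $\qdim_V M^i$ with $S_{i,0}/S_{0,0}$ via the dominance of the vacuum character (which is exactly where the positivity hypothesis on the conformal weights enters), use the Verlinde formula to make $\qdim_V$ a positive ring homomorphism on the fusion ring, and then run the rigidity/counting argument $\qdim_V(M)^2=\qdim_V(M\boxtimes M^*)\ge \qdim_V(V)=1$ together with part (1) to pin down $M^i\boxtimes (M^i)^*\cong V$. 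The only steps worth spelling out more are the final implication that invertibility in the fusion ring ($M\boxtimes M^*\cong V$) yields the simple-current property (for irreducible $N$, a decomposition $M\boxtimes N=A\oplus B$ would split $N\cong M^*\boxtimes M\boxtimes N$), and the remark that $\qdim_V(M^*)=\qdim_V(M)$ follows already from the equality of graded dimensions of $M$ and its contragredient, without appealing to reality of $S_{i,0}$. Modulo the deep inputs you explicitly flag (Zhu's modular invariance and Huang's Verlinde formula for strongly regular VOAs), the proposal is correct.
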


The following lemma will be used in the computation of quantum dimensions. Note that the case when $|g|$ is a prime has been discussed in \cite{LS17}.

\begin{lemma}\label{detLg}
Let $L$ be a lattice (not necessarily integral).
Let $g\in O(L)$ be 
fixed point free 
isometry of $L$. Then we have $|L/(1-g)L|= |\det(1-g)|$.     
\end{lemma}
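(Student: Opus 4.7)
The plan is to observe that the statement is really about $L$ as a free abelian group of finite rank and $1-g$ as a $\Z$-linear endomorphism; the bilinear form on $L$ plays no direct role, which is why the integrality of $L$ is not needed. First I would show that $1-g$ is injective on $L$. Indeed, if $(1-g)x=0$ for some $x\in L$, then $gx=x$, which by fixed-point freeness forces $x=0$. Consequently $1-g$ extends to an invertible $\Q$-linear map on $\Q\otimes_\Z L$, so $(1-g)L$ is a finite-index sublattice of $L$ and $\det(1-g)\neq 0$.

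Next, since $1-g\colon L\to L$ is an injective endomorphism of a free abelian group of finite rank, I would invoke the Smith normal form (elementary divisor theorem): there exist a $\Z$-basis $\{v_1,\dots,v_n\}$ of $L$ and nonzero integers $d_1,\dots,d_n$ together with another $\Z$-basis $\{w_1,\dots,w_n\}$ of $L$ such that $(1-g)w_i=d_i v_i$. Hence $(1-g)L$ has $\Z$-basis $\{d_1 v_1,\dots,d_n v_n\}$, and
\[
L/(1-g)L\;\cong\;\bigoplus_{i=1}^{n}\Z/d_i\Z,
\]
which is a finite abelian group of order $\prod_{i=1}^{n}|d_i|$.

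Finally, expressing $1-g$ in the basis $\{w_1,\dots,w_n\}$ of the source and $\{v_1,\dots,v_n\}$ of the target (or equivalently, using that the determinant is independent of the choice of $\Z$-basis up to sign, since change-of-basis matrices lie in $GL_n(\Z)$ and therefore have determinant $\pm 1$), I get $|\det(1-g)|=\prod_{i=1}^{n}|d_i|$. Combining with the previous display yields $|L/(1-g)L|=|\det(1-g)|$, as required.

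There is essentially no hard step here: the only mild subtlety is confirming that fixed-point freeness on $L$ really means $1$ is not an eigenvalue of $g$ on $\Q\otimes_\Z L$ (which is immediate because $L$ spans $\Q\otimes_\Z L$), so that $1-g$ is genuinely invertible over $\Q$ and Smith normal form applies with all $d_i\neq 0$.
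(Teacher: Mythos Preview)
Your proof is correct and follows essentially the same approach as the paper: both argue that fixed-point freeness makes $1-g$ an injective $\Z$-endomorphism, apply the elementary divisor (Smith normal form) theorem to compute $|L/(1-g)L|$ as a product of elementary divisors, and then identify this product with $|\det(1-g)|$ using that change-of-$\Z$-basis matrices have determinant $\pm 1$. The only cosmetic difference is that the paper phrases the last step via an auxiliary automorphism $\phi$ of $(1-g)L$, whereas you invoke the two-basis form of Smith normal form directly.
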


\begin{proof}
Let $N=(1-g)L$. Since $g$ is fixed point free on $L$, $(1-g)$ induces a $\Z$-linear isomorphism between $L$ and $N$ and $N$ is a full rank $\Z$-submodule of $L$. By the elementary factor theorem, there is a basis $\{x_1, \dots , x_n\}$ of $L$  and $\lambda_1, \dots, \lambda_n\in \Z_{>0}$ such that 
$\{ \lambda_1 x_1, \dots, \lambda_nx_n\}$ is a basis for $N$. Note that $\{ (1-g)x_1, (1-g)x_2, \dots, (1-g)x_n\}$ also forms a basis for $N$. Let $\phi$ be a $\Z$-linear map which maps 
$(1-g)x_i$ to $\lambda_ix_i$. Then the matrix of $\phi (1-g)$ with respect to the basis  $\{x_1, \dots , x_n\}$ is given by 
\[
\begin{pmatrix}
\lambda_1 & \cdots & 0\\
\vdots  & \ddots & \vdots\\
0 & \cdots & \lambda_n
\end{pmatrix} . 
\]
Then we have $[L:N]= \lambda_1\cdots\lambda_n=\det\phi(1-g)= \det\phi\det(1-g)$.  Since $\phi$ maps a $\Z$-basis of $N$ to a $\Z$-basis, we have $|\det\phi|=1$ and the lemma follows. 
\end{proof}


Next we review several facts about the character for $\phi_{g}$-twisted $V_L$-modules, where $\phi_g$ is a standard lift of $g$. First we assume that $g$ is a fixed point free isometry on $L$.  
In this case, $L^g=0$ and the irreducible $\phi_g$-twisted modules are given by 
$V_L^T=M(1)[g]\otimes T$, where $T$ is an irreducible module for a certain ``$g$-twisted" central extension of $L$ (see \cite{DL}).
As a consequence, we have  the following result.

\begin{lemma}[cf. \cite{ALY,DL}]
Let $g\in O(L)$ be fixed point free. Then 
\begin{align*}
Z_{V_L^T}(\tau)&=\frac{(\dim T)q^{\left(\sum_{i=1}^{p-1}\frac{i(p-i)r_i}{4p^2}\right)-\frac{\ell}{24}}}{\prod_{j=1}^{p-1}\prod_{n=0}^\infty (1-q^{j/p+n})^{r_j}}
=(\dim T)q^{\left(\sum_{i=1}^{p-1}\frac{i(p-i)r_i}{4p^2}\right)-\frac{\ell}{24}}\prod_{d|p}\frac{q^{\frac{m_d}{24d}}}{\eta(\tau /d)^{m_d}},
\end{align*}
where $\ell=\mathrm{rank}(L)$, $r_i=\dim \mathfrak{h}_{(j)}$ and $q=e^{2\pi i\tau}$. 
\end{lemma}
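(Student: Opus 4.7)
The proof is essentially a direct character computation using the explicit model for $V_L^T$ recalled in Section~3.3, followed by a repackaging of the resulting $q$-series as an $\eta$-quotient. Since $g$ is fixed point free, $L^g=0$ and $\mathfrak{h}_{(0)}=0$, so the lattice factor $\C[P_0^g(\lambda+L)]$ in the general model collapses to $\C$ and we simply have $V_L^T=M(1)[g]\otimes T$. Correspondingly, the conformal weight formula (3.2) for a PBW monomial reduces to $\sum_i m_i+\rho_T$, and the finite-dimensional space $T$ contributes only an overall multiplicative factor $\dim T$ from the bottom of the grading.

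For the first equality I would just read off the graded dimension of $M(1)[g]$. By Section~3.3 this space carries a PBW basis $x_1(-m_1)\cdots x_s(-m_s)\mathbf{1}$ with $m_i=j_i/p+n_i$, where $j_i\in\{1,\dots,p-1\}$, $n_i\ge 0$, and $x_i$ ranges over a basis of $\mathfrak{h}_{(j_i)}$. Treating each $x(-m)$ as a bosonic creation operator, the standard Fock-space generating function contributes a factor $(1-q^{j/p+n})^{-r_j}$ per basis vector of $\mathfrak{h}_{(j)}$. Taking the trace of $q^{L_0-c/24}$ with $c=\ell$, and pulling out the ground-state contribution $(\dim T)\,q^{\rho_T-\ell/24}$, yields the first equality verbatim.

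For the second equality I would reparametrize via $m=j+np$ to rewrite
\[
\prod_{j=1}^{p-1}\prod_{n\ge0}(1-q^{j/p+n})^{r_j}=\prod_{\substack{m\ge 1\\ p\nmid m}}\bigl(1-q^{m/p}\bigr)^{r_{m\bmod p}}.
\]
Because $g\in O(L)$ is defined over $\Q$, the value $r_j$ depends only on the order $d=p/\gcd(j,p)$ of the eigenvalue $\zeta_p^j$, so I would group the $m$'s by this common $d$ and substitute $m=(p/d)k$ with $\gcd(k,d)=1$. Then using the Dedekind identity $\prod_{n\ge1}(1-q^{n/d})=q^{-1/(24d)}\eta(\tau/d)$ together with M\"obius inversion on the divisor lattice of $p$---applied to the factorization $\prod_{n\ge1}(1-q^{n/d})=\prod_{e\mid d}\prod_{k\ge1,\,\gcd(k,e)=1}(1-q^{k/e})$---one rewrites the product as $q^{-\sum_d m_d/(24d)}\prod_{d\mid p}\eta(\tau/d)^{m_d}$ for the $m_d$ produced by the inversion, and inverting delivers the claimed $\eta$-quotient form.

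The main obstacle is purely bookkeeping in this last step: tracking the M\"obius-inverted exponents $m_d$ across the divisor lattice of $p$ and verifying they match the convention used in the statement (the $d=1$ term drops out because the multiplicity of eigenvalue $1$ vanishes by the fixed-point-freeness of $g$). The underlying character calculation itself is completely standard once the explicit model of Section~3.3 is in hand, and the lemma is essentially a restatement of identities appearing in \cite{ALY,DL}.
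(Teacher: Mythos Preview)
The paper does not give its own proof of this lemma; it is quoted from \cite{ALY,DL} as a known input. Your derivation is correct and is precisely the standard one: the first equality is the Fock-space character of $M(1)[g]\otimes T$ read off from the PBW basis of Section~3.3, and the second equality follows by reindexing $m=j+np$, using that $r_j$ depends only on the order $d=p/\gcd(j,p)$ of $\zeta_p^j$ (since $g$ acts on the lattice $L$ over $\Z$), and then unwinding $\prod_{n\ge1}(1-q^{n/d})=q^{-1/(24d)}\eta(\tau/d)$ via M\"obius inversion on the divisors of $p$. The one point worth making explicit is that the exponents $m_d$ produced by this inversion are exactly those defined later in the paper by $\det(x-g)=\prod_{d\mid p}(x^d-1)^{m_d}$; you note this as a bookkeeping matter, and it is.
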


Since the character of $V_{L}$ is given by 
\begin{align*}
Z_{V_L}(\tau)=\frac{\Theta_L(\tau)}{\eta(\tau)^\ell},
\end{align*}
where $\Theta_L(\tau)$ is the theta function of $L$, we have  
\begin{equation}
\frac{Z_{V_L^T}(iy)}{Z_{V_L}(iy)}=(\dim T) q^{\left(\sum_{i=1}^{p-1}\frac{i(p-i)r_i}{4p^2}\right)-\frac{\ell}{24}+\sum_{d|p}\frac{m_d}{24d}}\times \frac{\eta(iy)^\ell}{\prod_{d|p}\eta(iy/d)^{m_d}}\frac{1}{\Theta_L(iy)}.\label{E:VLT}
\end{equation}

The following result can be found in \cite{ALY}.

\begin{theorem}\label{theoremqdimtw_standard}
Let $L$ be an even lattice of rank $\ell$.
Let $g$ be a fixed point free isometry of $ L$. 
Let $\hat{g}$ be a lift of $g$.
For any $\widehat{L}_g$-module $T$, the quantum dimension of the $\hat{g}$-twisted module $V_L^T$  
exists and 
\[
\qdim_{V_L}V_L^T:=\lim_{y\rightarrow 0^+}\frac{Z_{V_L^T}(iy)}{Z_{V_L}(iy)}=\frac{v\dim T}{\prod_{d|p}d^{m_d/2}},
\]
where $v=\sqrt{|\mathcal{D}(L)|}$ and $m_d$ are integers given by 
$\det(x-g)=\prod_{d|p}(x^d-1)^{m_d}$. 
\end{theorem}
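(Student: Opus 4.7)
The plan is to compute the limit directly from the explicit formula (\ref{E:VLT}) by applying the modular $S$-transformation $\tau\mapsto -1/\tau$ to each modular form appearing there, so that the delicate behavior as $y\to 0^+$ is turned into tractable asymptotics at large imaginary argument.

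Concretely, I would first record the transformation laws $\eta(-1/\tau)=\sqrt{-i\tau}\,\eta(\tau)$ and $\Theta_L(-1/\tau)=(-i\tau)^{\ell/2}\Theta_{L^*}(\tau)/\sqrt{|\mathcal{D}(L)|}$, and use them to rewrite
\[
\eta(iy)=\eta(i/y)/\sqrt{y}, \qquad \eta(iy/d)=\eta(id/y)/\sqrt{y/d}, \qquad \Theta_L(iy)=\tfrac{(1/y)^{\ell/2}}{\sqrt{|\mathcal{D}(L)|}}\,\Theta_{L^*}(i/y).
\]
Since $q=e^{-2\pi y}\to 1$, the $q$-power prefactor in (\ref{E:VLT}) contributes $1$ in the limit. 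As $y\to 0^+$, the new arguments $i/y$ and $id/y$ tend to $i\infty$, so $\Theta_{L^*}(i/y)\to 1$ (only the origin contributes) and each $\eta(it)\cdot e^{\pi t/12}\to 1$. Substituting these expressions into (\ref{E:VLT}) and collecting the prefactors produces a product of an exponential factor $e^{-\pi\ell/(12y)}\big/\prod_{d|p}e^{-\pi d m_d/(12y)}$, a power factor $y^{\ell/2}\cdot y^{\sum m_d/2}\big/(y^{\ell/2}\prod d^{m_d/2})$, and the overall constant $v\dim T$.

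The heart of the argument is then to verify that the exponential and power-of-$y$ factors both collapse to $1$, which is where two combinatorial identities derived from $\det(x-g)=\prod_{d|p}(x^d-1)^{m_d}$ come in. Comparing degrees on both sides gives $\sum_{d|p} d\,m_d=\ell$, which cancels the exponential factor. On the other hand, writing $\det(x-g)=\prod_{d|p}\Phi_d(x)^{a_d}$ in terms of cyclotomic polynomials and using $(x^d-1)=\prod_{d'|d}\Phi_{d'}(x)$, one obtains $a_1=\sum_{d|p}m_d$; the fixed-point-free hypothesis says $1$ is not an eigenvalue of $g$, hence $a_1=0$ and $\sum_{d|p}m_d=0$, which cancels the powers of $y$. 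I expect this bookkeeping step to be the main obstacle: without either identity the limit would be $0$ or $\infty$, so the fixed-point-free hypothesis is exactly what makes the quantum dimension finite and nonzero.

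With these cancellations, the only surviving contributions are the $\sqrt{|\mathcal{D}(L)|}=v$ from $\Theta_L$, the factor $\prod_{d|p}d^{-m_d/2}$ coming from the $\sqrt{d/y}$ pieces in the transformed $\eta(iy/d)$, and $\dim T$ from the prefactor, yielding
\[
\qdim_{V_L}V_L^{T}=\lim_{y\to 0^+}\frac{Z_{V_L^T}(iy)}{Z_{V_L}(iy)}=\frac{v\,\dim T}{\prod_{d|p}d^{m_d/2}},
\]
which is exactly the asserted formula.
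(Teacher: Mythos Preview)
Your argument is correct. The paper does not prove this theorem; it simply cites the result from \cite{ALY}. Your approach --- apply the $S$-transformation to $\eta$ and $\Theta_L$, reduce the $y\to 0^+$ limit to $q'\to 0$ asymptotics, and then use the two identities $\sum_{d\mid p} d\,m_d=\ell$ (degree count) and $\sum_{d\mid p} m_d=0$ (fixed-point-free implies $\Phi_1\nmid\det(x-g)$) to kill the exponential and the power-of-$y$ factors --- is exactly the standard computation, and it is in fact the method used in \cite{ALY}. So there is nothing to correct: you have supplied the proof that the paper outsources. One cosmetic remark: your displayed ``power factor'' $y^{\ell/2}\cdot y^{\sum m_d/2}/(y^{\ell/2}\prod d^{m_d/2})$ is correct after simplification but slightly misleading as written, since the two $y^{\ell/2}$'s come from $\eta(iy)^\ell$ and $\Theta_L(iy)$ separately; it would be cleaner to track the three contributions ($\eta$-numerator, $\eta$-denominator, $\Theta_L$) individually before combining.
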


As a corollary, we have 
\begin{corollary}\label{cor973}
Let $L$ be an even lattice.  Assume that $g\in O(L)$ is fixed point free and $(1-g)L^* <L$. Then
\[
\qdim_{V_{L_g}} V_{L}^T=  1 
\]
for any irreducible $(\widehat{L})_g$-module $T$.    
\end{corollary}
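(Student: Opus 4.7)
The plan is to apply Theorem~\ref{theoremqdimtw_standard} directly and to verify that, under the hypothesis $(1-g)L^*<L$, the resulting quotient equals $1$. Since $g$ is fixed-point free on $L$, $L^g=0$ and hence $L_g=L$; in particular $V_{L_g}=V_L$, and $T$ is an irreducible module for the $g$-twisted central extension $(\widehat{L})_g$ of $L$ described in \cite{Le,DL,BK04}. By Theorem~\ref{theoremqdimtw_standard},
\[
\qdim_{V_L}V_L^T \;=\; \frac{v\cdot\dim T}{\prod_{d\mid p}d^{m_d/2}},
\]
where $v=\sqrt{|L^*/L|}$ and the $m_d$ come from $\det(x-g)=\prod_{d\mid p}(x^d-1)^{m_d}$; it suffices to show $v\cdot\dim T=\prod_{d\mid p}d^{m_d/2}$.

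First I would identify $\prod_{d\mid p}d^{m_d/2}$ with $\sqrt{[L:(1-g)L]}$. Since $g$ is fixed-point free, $(x-1)$ does not divide $\det(x-g)$; rewriting the factorization in terms of cyclotomic polynomials forces $\sum_d m_d=0$, and then passing to the limit $x\to 1$ gives $|\det(1-g)|=\prod_d d^{m_d}$. By Lemma~\ref{detLg} this equals $[L:(1-g)L]$.

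The main step is to show $\dim T=\sqrt{[L:(1-g)L^*]}$. The commutator on $(\widehat{L})_g$ is a $\Z$-valued bimultiplicative pairing on $L$ built from $1-g$; under $(1-g)L^*\subset L$, one checks that its radical (the preimage in $L$ of the center of $(\widehat{L})_g$ modulo $\langle\pm1\rangle$) coincides with $(1-g)L^*$, so by the standard Stone--von~Neumann argument applied to the finite Heisenberg-type quotient $L/(1-g)L^*$, every irreducible module has dimension $\sqrt{[L:(1-g)L^*]}$. Combining with the tower law and the injectivity of $1-g$ on $L^*$,
\[
v\cdot\dim T = \sqrt{[L^*:L]\cdot[L:(1-g)L^*]} = \sqrt{[L^*:(1-g)L^*]} = \sqrt{[L:(1-g)L]} = \prod_{d\mid p}d^{m_d/2},
\]
so $\qdim_{V_L}V_L^T=1$.

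The main obstacle is the computation of $\dim T$: one must make the commutator form on $(\widehat{L})_g$ explicit and verify that the hypothesis $(1-g)L^*\subset L$ is precisely what pins the radical of this form to $(1-g)L^*$. Once this is in place, the remaining identities are elementary manipulations of lattice indices via Lemma~\ref{detLg} and the tower law.
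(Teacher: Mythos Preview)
Your proposal is correct and follows essentially the same route as the paper: apply Theorem~\ref{theoremqdimtw_standard}, identify $\prod_{d\mid p}d^{m_d}$ with $|\det(1-g)|=[L:(1-g)L]$ via Lemma~\ref{detLg}, and then use the tower of indices $[L^*:L]\cdot[L:(1-g)L^*]=[L^*:(1-g)L^*]=[L:(1-g)L]$. The only difference is cosmetic: where you sketch the Stone--von~Neumann argument to obtain $\dim T=\sqrt{[L:(1-g)L^*]}$, the paper simply quotes \cite[Lemma~3.2]{ALY}, which gives $\dim T=|L/R|^{1/2}$ with $R=((1-g)L^*)\cap L$, and then observes that the hypothesis $(1-g)L^*<L$ forces $R=(1-g)L^*$.
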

\begin{proof}
The proof is similar to \cite[Corollary 4.11]{LS17}. 
First we recall that $\dim(T)=|L/R|^{\frac{1}2} $, where $R=((1-g)L^*) \cap L$ (see \cite[Lemma 3.2]{ALY}).  

By our assumption, $R=((1-g)L^*) \cap L =(1-g)L^*$. 
Then by Theorem \ref{theoremqdimtw_standard} and Lemma \ref{detLg},  we have 
\[
\begin{split}
(\qdim_{V_{L}} V_{L}^T)^2& =\frac{1}{\prod_{d|p}d^{m_d}} \left(\left| \frac{L^*}{L}\right| \cdot \dim(T)^2\right)\\ 
&= \frac{1}{\prod_{d|p}d^{m_d}}  \left(\left| \frac{(1-g)L^*}{(1-g)L}\right| \cdot 
\left| \frac{L}{(1-g)L^*}\right|
\right)\\
&= \frac{1}{\prod_{d|p}d^{m_d}}  \cdot 
\left| \frac{L}{(1-g)L}\right|\\
&= \frac{1}{\prod_{d|p}d^{m_d}} \cdot |\det (1-g)|\\
& =  \frac{1}{\prod_{d|p}d^{m_d}} \cdot  \prod_{d|p}d^{m_d}= 1   
\end{split}
\]
as desired.  
\end{proof}

For any $1\leq i \leq |g|-1$, we have $L^{g^i}\perp L_{g^i} < L$ as a full rank sublattice. 
Let $p_1: L\otimes_\Z\Q \to L^{g^i}\otimes_\Z\Q$ and $p_2: L\otimes_\Z\Q \to L_{g^i}\otimes_\Z\Q$ be natural projections. We also use $\lambda'$ and $\lambda''$ to denote the images of $\lambda$ under the natural projections $p_1$ and $p_2$, respectively. Then 
\[
V_L= \bigoplus_{\lambda \in L/(L^{g^i}\perp L_{g^i})} V_{\lambda'+L^{g^i}} \otimes V_{\lambda''+ L_{g^i}}.
\] 

\begin{lemma}\label{gtoitrivial}
Let $L$ be an even lattice and let $g\in O(L)$. Suppose that  $(1-g)L^* <L$. Then $(1-g^i)L_{g^i}^* < L_{g^i}$ for any $1\leq i \leq |g|-1$. 
\end{lemma}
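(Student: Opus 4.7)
The plan is to reduce the problem to a statement about a single isometry and then prove it by a direct duality argument. First, I would observe that the hypothesis $(1-g)L^* \subset L$ propagates to $g^i$ via the factorization
\[
1 - g^i = (1-g)(1 + g + g^2 + \cdots + g^{i-1}),
\]
which shows $(1-g^i) L^* \subset (1-g)L^* \subset L$. So, writing $h = g^i$, the problem becomes: given an isometry $h$ of an even lattice $L$ with $(1-h)L^* \subset L$, show $(1-h) L_h^* \subset L_h$.

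Next, I would fix an arbitrary $u \in L_h^*$ and argue that $(1-h)u \in L_h$. The element $(1-h)u$ automatically lies in the $\mathbb{Q}$-span of $L_h$ (its projection onto $L^h \otimes \mathbb{Q}$ is zero, since $h$ fixes that subspace pointwise). Combined with the definition $L_h = L \cap (L_h \otimes \mathbb{Q})$, it suffices to show $(1-h)u \in L$.

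For that, I would use the reflexivity $L^{**} = L$ and verify the pairing $((1-h)u, y) \in \mathbb{Z}$ for every $y \in L^*$. Since $h$ is an isometry, this pairing equals $(u, (1-h^{-1})y)$. Now apply the same two observations to $(1-h^{-1})y$: first, by the hypothesis applied to $h^{-1}$ we have $(1-h^{-1})y \in L$; second, $(1-h^{-1})y$ is killed by the projection to $L^h \otimes \mathbb{Q}$, so it lies in $L_h \otimes \mathbb{Q}$. Hence $(1-h^{-1})y \in L \cap (L_h \otimes \mathbb{Q}) = L_h$, and since $u \in L_h^*$ the pairing is an integer.

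This is really a straightforward duality manipulation; there is no serious obstacle. The only small point to keep track of is the standard identification $L_h \otimes \mathbb{Q} = (L^h)^\perp$ (so that $L_h = L \cap (L_h \otimes \mathbb{Q})$) and the fact $L^{**} = L$ for any full-rank integral lattice, both of which are elementary.
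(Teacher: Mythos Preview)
Your proof is correct. Both you and the paper begin with the same factorization $1-g^i=(1-g)(1+g+\cdots+g^{i-1})$ to obtain $(1-g^i)L^*\subset L$, and both finish by using the primitivity identification $L_{g^i}=L\cap(L_{g^i}\otimes\mathbb{Q})$. The one genuine difference lies in the middle step: the paper observes that the orthogonal projection $p_2:L\otimes\mathbb{Q}\to L_{g^i}\otimes\mathbb{Q}$ satisfies $p_2(L^*)=L_{g^i}^*$, so that $(1-g^i)L_{g^i}^*=(1-g^i)p_2(L^*)=(1-g^i)L^*$ immediately, and the result follows in one line. You instead bypass this projection identity by the pairing computation $((1-h)u,y)=(u,(1-h^{-1})y)$ together with $L^{**}=L$. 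Your route is slightly longer but has the advantage of not invoking the (standard but not entirely trivial) surjectivity of $p_2$ onto $L_{g^i}^*$; the paper's route is more direct once that fact is granted.
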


\begin{proof}
We first observe that $p_2(L^*)= L_{g^i}^*$. Thus, $(1-g^i)L^*= (1-g^i)L_{g^i}^*$. By our assumption, 
$(1-g^i)L^*< (1-g)L^* <L$. Therefore, $(1-g^i)L_{g^i}^* < (L_{g^i}\otimes \Q) \cap L =L_{g^i}$ as desired.  
\end{proof}

\begin{lemma}\label{lem:4.10}
Let $L$ be an even lattice. Let $g\in O(L)$ and  $\hat{g}$  a lift of $g$ in $\aut(V_L)$ with finite order.  Assume that  $(1-g)L^* <L$.  Then 
\[
 \qdim_{V_{L}} V_{\lambda+ L}[\hat{g}^i]= 1
\]
for any $\lambda+L\in L^*/L$ and $i=0, 1, \dots, |\hat{g}|-1$. 
\end{lemma}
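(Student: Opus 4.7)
The plan is to extend the character-theoretic computation from the proof of Corollary~\ref{cor973} to cover two features absent there: (i) $g^i$ may have nonzero fixed points on $L$, and (ii) $\hat g^i$ may be a non-standard lift of $g^i$, i.e., $\hat g^i = \sigma_{\mu_i}\phi_{g^i}$ for some $\mu_i\in\mathfrak h^{g^i}$ and a standard lift $\phi_{g^i}$ of $g^i$.

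First, using \eqref{twmodule} I would write
\[
V_{\lambda+L}[\hat g^i] \cong M(1)[g^i]\otimes\C[\mu_i+\lambda'+P_0^{g^i}(L)]\otimes T_{\tilde\lambda}
\]
and split $Z_{V_{\lambda+L}[\hat g^i]}(\tau)$ into three factors: a twisted-Heisenberg character (a product over the nonzero $g^i$-eigenspaces in $\mathfrak h$, together with an untwisted Heisenberg factor for $\mathfrak h^{g^i}$), a shifted theta function $\Theta_{\mu_i+\lambda'+P_0^{g^i}(L)}(\tau)$, and the scalar $\dim T_{\tilde\lambda}$.

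Second, I would form the ratio with $Z_{V_L}(\tau)=\Theta_L(\tau)/\eta(\tau)^\ell$ and take the $y\to 0^+$ limit along $\tau=iy$, in analogy with \eqref{E:VLT}. The $q^{\rho_T}$ prefactor tends to $1$; the Heisenberg factor for $\mathfrak h^{g^i}$ matches precisely the corresponding $\eta(iy)^{\mathrm{rank}(L^{g^i})}$-piece of $Z_{V_L}(iy)$; and standard modular asymptotics give $\Theta_{\alpha+L_0}(iy)\sim |\det L_0|^{-1/2}\,y^{-\mathrm{rank}(L_0)/2}$ for any rational shift $\alpha$, so that the shift $\mu_i$ only affects sub-leading terms. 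After this bookkeeping, the ratio reduces to exactly the expression appearing in the proof of Corollary~\ref{cor973}, now applied to $g^i$ acting on the coinvariant sublattice $L_{g^i}$.

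Third, I would invoke Lemma~\ref{gtoitrivial}, which under the standing hypothesis $(1-g)L^*<L$ gives $(1-g^i)L_{g^i}^*<L_{g^i}$. The algebraic identities used in the proof of Corollary~\ref{cor973} (the dimension formula for $T_{\tilde\lambda}$ from \cite{ALY} together with the identity $|L_{g^i}/(1-g^i)L_{g^i}|=|\det(1-g^i|_{L_{g^i}\otimes\C})|=\PI_{d\mid n}d^{m_d}$ of Lemma~\ref{detLg}) then force all factors to cancel and yield $\qdim_{V_L}V_{\lambda+L}[\hat g^i]=1$.

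The main obstacle is the bookkeeping in Step~2: correctly separating the Heisenberg contributions from $\mathfrak h^{g^i}$ and from its orthogonal complement, matching them against the corresponding pieces of $Z_{V_L}$, and verifying that the shift $\mu_i$ is invisible to the leading-order asymptotics of the theta function at $y\to 0^+$. Once this separation is carried out, the cancellation is a direct adaptation of the fixed-point-free case treated in Corollary~\ref{cor973}.
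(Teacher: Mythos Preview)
Your approach is correct in outline and uses the same two key inputs (Lemma~\ref{gtoitrivial} and Corollary~\ref{cor973}), but the reduction is organized differently from the paper's proof.

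The paper does not work with the single product formula \eqref{twmodule}. Instead it uses the orthogonal decomposition $L^{g^i}\perp L_{g^i}<L$ to write both $V_L$ and $V_{\lambda+L}[\hat g^i]$ as finite direct sums of $V_{L^{g^i}}\otimes V_{L_{g^i}}$-modules indexed by $L/(L^{g^i}\perp L_{g^i})$:
\[
V_L=\bigoplus_{\lambda} V_{\lambda'+L^{g^i}}\otimes V_{\lambda''+L_{g^i}},\qquad
V_{\lambda+L}[\hat g^i]=\bigoplus_{\lambda} V_{\mu+\lambda'+L^{g^i}}\otimes V_{\lambda''+L_{g^i}}[\hat g^i].
\]
Dividing numerator and denominator of the character ratio by $Z_{V_{L^{g^i}}}Z_{V_{L_{g^i}}}$ and passing to the limit, every summand in both sums tends to $1$ (untwisted lattice modules have $\qdim=1$; the twisted factor has $\qdim=1$ by Corollary~\ref{cor973} via Lemma~\ref{gtoitrivial}), and the answer is $[L:L^{g^i}\perp L_{g^i}]/[L:L^{g^i}\perp L_{g^i}]=1$.

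Your direct route via \eqref{twmodule} works too, but the ``bookkeeping'' you postpone conceals a nontrivial identity: once the untwisted Heisenberg factors are cancelled, matching what remains to the expression of Corollary~\ref{cor973} requires the theta-function asymptotic $\Theta_{\mu_i+\lambda'+P_0^{g^i}(L)}\,\Theta_{L_{g^i}}\sim\Theta_L$ as $y\to0^+$, i.e.\ $|\det L|=|\det P_0^{g^i}(L)|\cdot|\det L_{g^i}|$. This holds because $[P_0^{g^i}(L):L^{g^i}]=[L:L^{g^i}\perp L_{g^i}]$, an identity you would have to prove by hand. The paper's tensor decomposition encodes this lattice fact structurally (the same index shows up in both numerator and denominator and cancels), which is why that argument is shorter and avoids explicit $\eta$- and $\Theta$-asymptotics.
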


\begin{proof}
Suppose $\hat{g}^i =\sigma_\mu \phi_{g^i}$. Then by  \eqref{twmodule}, the irreducible twisted module 
\[
V_{\lambda+L}[\hat{g}^i]= M(1)[g^i]\otimes\C[\mu + \lambda'+ P_0^{g^i}(L)]\otimes T_{\tilde{\lambda}}. 
\]
Note that for $i=0$, $\hat{g}^i=1$, $\mu=0$ and $V_{\lambda+L}[\hat{g}^0]= M(1)\otimes\C[\lambda+L ]=V_{\lambda+L}.$

As a $\hat{g}^i$-twisted module of $V_{L^{g^i}}\otimes V_{L_{g^i}}$, we have 
\[
V_{\lambda+L}[\hat{g}^i]= \bigoplus_{\lambda \in L/(L^{g^i}\perp L_{g^i})} V_{\mu+\lambda'+L^{g^i}}\otimes V_{\lambda''+L_{g^i}} [\hat{g}^i]. 
\]
Then 
\[
\begin{split}
\frac{Z_{V_{\lambda+L}[\hat{g}^i]}(iy)}{Z_{V_{L}}(iy)} =
\frac{ \sum_{\lambda \in L/(L^{g^i}\perp L_{g^i})} Z_{V_{\mu+\lambda'+L^{g^i}}}(iy) Z_{V_{\lambda''+L_{g^i}} [\hat{g}^i]}(iy) }
{\sum_{\lambda \in L/(L^{g^i}\perp L_{g^i})} Z_{V_{\lambda'+L^{g^i}}}(iy) Z_{V_{\lambda''+L_{g^i}}}(iy)}.
\end{split}
\]
Divide both the numerator and the denominator by $Z_{V_{L^{g^i}}}(iy)Z_{V_{L_{g^i}}}(iy)$ and let $y$ tend to $0^+$. Then by Lemma \ref{gtoitrivial} and  Corollary \ref{cor973},  we have 
\[
\qdim_{V_{L}} V_{\lambda+ L}[\hat{g}^i]= \lim_{y\to 0^+} \frac{Z_{V_{\lambda+L}[\hat{g}^i]}(iy)}{Z_{V_{L}}(iy)}=\frac{[L: L^{g^i}\perp L_{g^i}]}{[L: L^{g^i}\perp L_{g^i}]}= 1
\]
as desired. 
\end{proof}

\begin{theorem}\label{tDaction} Let $L$ be an even lattice. Let $g\in O(L)$ and let $\hat{g}$ be a lift of $g$ in $\aut(V_L)$ with finite order. 
Then the VOA $V_{L}^{\hat{g}}$ has group-like fusion if $(1-g)L^*<L$. 
\end{theorem}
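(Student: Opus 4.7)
The plan is to reduce the claim to a quantum dimension computation, building directly on Lemma~\ref{lem:4.10}. By \cite{Mi,CM}, $V_L^{\hat{g}}$ is strongly regular, so its fusion theory is well-defined. By \cite{DRX}, every irreducible $V_L^{\hat{g}}$-module $W$ appears as a submodule of some irreducible $\hat{g}^i$-twisted $V_L$-module $M=V_{\lambda+L}[\hat{g}^i]$ with $0\le i\le |\hat{g}|-1$ and $\lambda+L\in L^*/L$. Under the assumption $(1-g)L^*<L$, Lemma~\ref{lem:4.10} gives $\qdim_{V_L} M=1$. By Theorem~\ref{qdim1}, it therefore suffices to show $\qdim_{V_L^{\hat{g}}} W=1$ for every such irreducible $V_L^{\hat{g}}$-submodule $W$; this will force $W$ to be a simple current.

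To transfer the quantum dimension to the orbifold, I would invoke the standard identity $\qdim_{V_L^{\hat{g}}} M = |\hat{g}|\cdot \qdim_{V_L} M$ from cyclic orbifold theory (arising from $Z_{V_L^{\hat{g}}}(iy)\sim Z_{V_L}(iy)/|\hat{g}|$ as $y\to 0^+$, which follows by writing $Z_{V_L^{\hat{g}}}$ as the $\hat{g}$-averaged graded trace on $V_L$ and noting that the nontrivial twisted contributions are asymptotically dominated by the identity term). Hence $\qdim_{V_L^{\hat{g}}} M = |\hat{g}|$. Writing $M=\bigoplus_{j=1}^s W^j$ as a direct sum of irreducible $V_L^{\hat{g}}$-modules, one has $\sum_{j=1}^s \qdim_{V_L^{\hat{g}}} W^j = |\hat{g}|$, and Theorem~\ref{qdim1} forces each summand to be $\ge 1$; in particular $s\le |\hat{g}|$.

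The remaining step, and the main obstacle, is to show $s=|\hat{g}|$, which then forces $\qdim_{V_L^{\hat{g}}} W^j=1$ for every $j$ and hence, by Theorem~\ref{qdim1}, that each $W^j$ is a simple current. Here I would invoke the twisted-module analogue of Proposition~\ref{gconj}: $M$ decomposes into $|\hat{g}|/k$ irreducible $V_L^{\hat{g}}$-modules, where $k$ is the smallest positive integer with $M\circ \hat{g}^k\cong M$ as $\hat{g}^i$-twisted $V_L$-modules. For $i=0$ the equivalence $V_{\lambda+L}\circ\hat{g}\cong V_{g^{-1}\lambda+L}\cong V_{\lambda+L}$ gives $k=1$, the second isomorphism being exactly the hypothesis that $g$ fixes $L^*/L$ pointwise. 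For $i\ge 1$, I would use the explicit realization~\eqref{twmodule} of $V_{\lambda+L}[\hat{g}^i]$ from Section~\ref{Sec:twist} to check that conjugation by $\hat{g}$ preserves the triple $(\lambda+L,\mu,T_{\tilde{\lambda}})$ up to isomorphism, again using $(1-g)L^*<L$. The delicate bookkeeping appears in the non-standard case $\hat{g}=\sigma_\mu \phi_g$ with $|\hat{g}|=2|g|$, where the extra sign from Lemma~\ref{Lem:ordSLift} must be carefully tracked; but in every case the argument reduces to verifying that the natural $\hat{g}$-equivariance on $M$ descends, giving $k=1$ and completing the proof.
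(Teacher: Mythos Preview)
Your approach is essentially identical to the paper's: both reduce to showing each irreducible $V_L^{\hat g}$-submodule of an irreducible $\hat g^i$-twisted module has quantum dimension $1$, via Lemma~\ref{lem:4.10} for $\qdim_{V_L}M=1$, the transfer to $\qdim_{V_L^{\hat g}}M=|\hat g|$, and $\hat g$-stability of $M$ to force exactly $|\hat g|$ irreducible summands. Where you sketch the transfer identity and the stability check for $i\ge 1$ by hand, the paper simply cites \cite[Corollary~4.5]{DRX} and \cite{DL,FLM} respectively, which cleanly covers the non-standard-lift case you flag as delicate; you should also record, as the paper does, that Theorem~\ref{qdim1} requires the nontrivial irreducible $V_L^{\hat g}$-modules to have positive conformal weight.
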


\begin{proof} First we recall that $V_{L}^{\hat{g}}$ is  $C_2$-cofinite and rational \cite{Mi,CM} and  any irreducible $V_{L}^{\hat{g}}$-module is a submodule of an irreducible $\hat{g}^i$-twisted $V_{L}$-module for some $0\le i\le |\hat{g}|-1$  \cite{DRX}.
Note also that the conformal weights of irreducible irreducible $\hat{g}^i$-twisted $V_{L}$-modules are positive for $1\le i\le |\hat{g}|-1$. Therefore, the conformal weights of irreducible $V_{L}^{\hat{g}}$-modules are also positive  except for $V_{L}^{\hat{g}}$ itself. 

Let $M$ be an irreducible $\hat{g}^i$-twisted 
$V_{L}$-module for some $0\le i\le |\hat{g}|-1$. Then by Lemma \ref{lem:4.10}, $\qdim_{V_{L}} M=1$.  It follows from \cite[Corollary 4.5]{DRX} that $$\qdim_{V_{L}^{\hat{g}}} M=|\hat{g}|.$$
For $i=0$, it follows from our assumption that all irreducible $V_{L}$-modules are $\hat{g}$-stable. For $1\leq i\leq |\hat{g}|-1$, it is also known \cite{DL,FLM} that all irreducible 
$\hat{g}^i$-twisted $V_{L}$-modules are $\hat{g}$-stable. Hence, the eigenspace decomposition of $\hat{g}$ on any $\hat{g}^i$-twisted  $V_{L}$-module $M$ gives a direct sum of $|\hat{g}|$ irreducible $V_{L}^{\hat{g}}$-submodules of $M$.
By Theorem \ref{qdim1} (1),  the quantum dimension of any irreducible $V_L^{\hat{g}}$-module is $\geq 1$. Thus, every irreducible $V_{L}^{\hat{g}}$-submodule of $M$ has quantum dimension $1$, and it is a simple current module. 
Hence all irreducible $V_{L}^{\hat{g}}$-modules are simple current modules.
\end{proof}

\begin{remark}\label{Num_mod}
By the discussion above, we know that there are exactly $|L^*/L|$ irreducible $\hat{g}^i$-twisted $V_L$-modules for each $0 \le i\le |\hat{g}|-1$ and each irreducible $\hat{g}^i$-twisted $V_L$-module decomposes as a direct sum of $|\hat{g}|$ irreducible $V_L^{\hat{g}}$-modules. Therefore, there are totally $|L^*/L|\cdot |\hat{g}|^2$ irreducible modules for $V_{L}^{\hat{g}}$. 
\end{remark}

It turns out that the main assumption $(1-g)L^*<L$ always holds if $L$ is a coinvariant sublattice of an even unimodular lattice. 

\begin{lemma}[cf. {\cite[Lemma 4.2]{LS17}}]\label{L4.2}
Let $L$ be an even unimodular lattice and $g\in O(L)$. Suppose $g\neq 1$. 
Then $(1-g)L_g^* < L_g$ and hence $\alpha+L_g = g\alpha +L_g$ for all $\alpha +L_g\in L_g^*/L_g$. 
\end{lemma}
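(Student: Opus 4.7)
The plan is to verify that $(1-g)\lambda$ lies in $L = L^*$ for every $\lambda \in L_g^*$, by exploiting the self-duality of $L$ together with the adjointness of $1-g$ with respect to the bilinear form. Once $(1-g)L_g^* \subset L$ is established, the inclusion into $L_g$ is automatic because $(1-g)$ preserves the $g$-stable rational subspace $L_g \otimes \Q$.

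First I would set up the key adjunction: for any $\lambda \in L_g^*$ and any $x \in L$, one has
\begin{equation*}
((1-g)\lambda \mid x) \;=\; (\lambda \mid (1-g^{-1})x),
\end{equation*}
since $g \in O(L)$ is an isometry. Next I would check the auxiliary claim that $(1-g^{-1})x \in L_g$ whenever $x \in L$. The containment in $L$ is clear as $g^{-1}$ preserves $L$; to see it is orthogonal to $L^g$, for any $y \in L^g$ note that $g^{-1}y = y$, so
\begin{equation*}
((1-g^{-1})x \mid y) \;=\; (x\mid y) - (g^{-1}x \mid y) \;=\; (x\mid y) - (x \mid gy) \;=\; (x\mid y) - (x\mid y) \;=\; 0.
\end{equation*}
Therefore $(1-g^{-1})x \in L \cap (L^g)^{\perp} = L_g$, as desired.

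With these two ingredients in hand, the main step is immediate. For any $x \in L$ and any $\lambda \in L_g^*$, the computation above gives $((1-g)\lambda \mid x) = (\lambda \mid (1-g^{-1})x) \in \Z$, because $\lambda \in L_g^*$ and $(1-g^{-1})x \in L_g$. Since this holds for every $x \in L$ and $L$ is unimodular, we conclude $(1-g)\lambda \in L^* = L$. On the other hand $(1-g)\lambda \in L_g \otimes_\Z \Q$ because $L_g \otimes_\Z \Q$ is $g$-invariant and contains $\lambda$. Hence $(1-g)\lambda \in L \cap (L_g \otimes_\Z \Q) = L_g$, proving $(1-g)L_g^* < L_g$. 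The final assertion that $\alpha + L_g = g\alpha + L_g$ for every $\alpha + L_g \in L_g^*/L_g$ is then merely a restatement of $(1-g)\alpha \in L_g$.

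The argument is essentially a two-line adjunction calculation, so I do not anticipate any serious obstacle; the only point requiring a moment of care is the verification that $(1-g^{-1})x$ lands in $L_g$ (rather than merely in $L$), which uses in an essential way that $g$ fixes the form and that the annihilator of $L^g$ inside $L$ is exactly $L_g$.
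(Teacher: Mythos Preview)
Your proof is correct. The paper itself does not supply a proof of this lemma; it merely cites \cite[Lemma 4.2]{LS17}, so there is no in-text argument to compare against. Your adjunction computation $((1-g)\lambda \mid x) = (\lambda \mid (1-g^{-1})x)$ together with the observation that $(1-g^{-1})L \subset L_g$ is exactly the standard route, and the final step $L \cap (L_g \otimes_\Z \Q) = L_g$ is legitimate because $L_g = \mathrm{Ann}_L(L^g)$ is by definition a primitive sublattice of $L$.
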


As a corollary, we also have the following theorem. 

\begin{theorem}\label{glf_p} Let $L$ be an even unimodular lattice.
Let $g$ be an element in $O(L)$.  Then the VOA $V_{L_g}^{\hat{g}}$ has group-like fusion, namely, all irreducible modules of $V_{L_g}^{\hat{g}}$ are simple current modules.  
\end{theorem}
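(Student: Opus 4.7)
The plan is to realize this theorem as an essentially immediate corollary of Theorem~\ref{tDaction} applied to the even lattice $L_g$, using Lemma~\ref{L4.2} to verify its key hypothesis. The strategy is straightforward: exhibit that $g$ restricts to an isometry of $L_g$, check that $(1-g)L_g^* < L_g$, and invoke the general group-like fusion criterion.

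First I would dispose of the trivial case $g=1$. Here $L^g=L$ and so $L_g=0$ by positive-definiteness, which makes $V_{L_g}\cong \C$ a one-dimensional VOA whose only irreducible module is itself, vacuously group-like.

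For $g\neq 1$, observe that $g$ preserves $L$ and fixes $L^g$ pointwise, so it preserves the orthogonal complement $L_g\subset L$; hence $g|_{L_g}$ is an isometry of the even positive-definite lattice $L_g$, and any finite-order lift $\hat{g}\in\aut(V_L)$ restricts (via the Heisenberg/twisted-group-algebra decomposition $V_L\supset V_{L_g}$) to a finite-order lift of $g|_{L_g}$ in $\aut(V_{L_g})$. By Lemma~\ref{L4.2}, the unimodularity of $L$ yields $(1-g)L_g^*<L_g$. This is exactly the hypothesis of Theorem~\ref{tDaction} applied to the pair $(L_g, g|_{L_g})$, so we conclude that $V_{L_g}^{\hat{g}}$ has group-like fusion.

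The proof has no genuine obstacle: the analytic and categorical work has been carried out upstream in Theorem~\ref{tDaction} (resting on the quantum-dimension computations of Corollary~\ref{cor973} and Lemma~\ref{lem:4.10}) and in the lattice-theoretic Lemma~\ref{L4.2}. The only substantive point to confirm is that the restriction of $\hat{g}$ to $V_{L_g}$ is again a lift in the sense of Definition~\ref{def:3.2}, which is immediate from the action on elements $e^{\alpha}$ with $\alpha\in L_g$.
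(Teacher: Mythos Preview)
Your proposal is correct and matches the paper's own treatment: the paper presents Theorem~\ref{glf_p} explicitly as a corollary, combining Lemma~\ref{L4.2} (which gives $(1-g)L_g^*<L_g$ from unimodularity of $L$) with Theorem~\ref{tDaction} applied to the lattice $L_g$. Your added remarks on the $g=1$ case and on why $\hat{g}$ restricts to a lift on $V_{L_g}$ are harmless elaborations of what the paper leaves implicit.
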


\begin{remark}\label{embed}
Let $L$ be a (positive definite) even lattice. Let $g\in O(L)$ be a fixed point free isometry such that $(1-g)L^*<L$. By 
\cite[Corollary 1.12.3]{Nik}, there is a (positive definite) even unimodular lattice $N$ such that $L$ can be embedded primitively into N (i.e., $(L\otimes_\Z \Q )\cap N =L$). 

Let $K=Ann_N(L)=\{x\in N\mid (x|L)=0\}$. Then $K\perp L$ is a full rank sublattice of $N$. Since $g$ acts trivially on $L^*/L$, the map $\tilde{g}$ given by $\tilde{g}|_K=id$ and $\tilde{g}|_{L}=g$ defines an isometry on $N$. Moreover, $L=N_{\tilde{g}}$ is the coinvariant sublattice of $g$ in $N$. 
\end{remark}

\section{Fusion ring of $V_{L}^{\hat{g}}$ and the corresponding quadratic space}\label{S:5}
In this section, we will determine the group structure for the fusion group of $V_{L}^{\hat{g}}$ and the corresponding quadratic space if $(1-g)L^*<L$ . We first  note that several special cases have been studied. 

(1) In \cite{ADL}, the fusion rules among irreducible $V_L^{\hat{g}}$-modules are determined when $|g|=2$ and $g$ is fixed point free on $L$. 

(2)  In \cite{EMS}, the case when $L$ is even unimodular has been studied. The fusion ring for $V_L^{\hat{g}}$ was also determined. 

For simplicity, we only consider the case when $g$ is fixed point free on $L$ (and $(1-g)L^*<L$). In this case, $L$ can be realized as a certain coinvariant sublattice of an even unimodular lattice $N$ (cf. Remark \ref{embed}) and we can use the results in \cite{EMS} to determine the fusion ring for $V_{L}^{\hat{g}}$.

 

\subsection{Fusion ring of $V_{N}^{\phi_{g}}$}\label{sec:5.1}

Next we  recall some facts about the fusion group of $V_N^{\phi_g}$ and the corresponding quadratic space structure from \cite{EMS} when $N$ is an even unimodular lattice. 

Let $N$ be an even unimodular lattice. Then the lattice VOA $V_N$ is holomorphic. 
Let $n$ be the order of $\phi_g$. Then for each $0 \leq i \leq n-1$, there is a unique irreducible $\phi_g^i$-twisted $V_N$-module  $V_{N}[\phi_{g}^i]$. The group $\langle \phi_g\rangle$ acts  naturally  on  $V_N[\phi_{g}^i]$ and such an action is unique up to a multiplication  of an $n$-th root of unity.   
Let $\varphi_i$ be a representation of $\langle \phi_g\rangle$ on $V_N[\phi_{g}^i]$. 
Denote 
\[
W^{i,j} =\{ w\in V_N[\phi_{g}^i]\mid \varphi_i(\phi_g) x= e^{2\pi \sqrt{-1} j/n} x\}
\] 
for $i,j\in \{0, \dots, n-1\}$.  In \cite{EMS}, it is proved that the orbifold VOA  $V_N^{\phi_g}$ has group-like fusion. Moreover, one can choose the representations $\varphi_i$, $i=0,\dots, n-1$, such that the fusion product 
\begin{equation}\label{FWij}
W^{i,k}\fusion W^{j, \ell} = W^{i+j, k+\ell +c_d(i,j)}, 
\end{equation}
where $c_d$ is defined by 
\[
c_d(i,j)= 
\begin{cases}
0 & \text{ if } i+j <n, \\
d & \text{ if } i+j \geq n
\end{cases}
\] 
for $i,j \in \{0, \dots, n-1\}$ and $d$ is determined by the conformal weight $\rho$ of the irreducible twisted module $V_N[\phi_g]$. More precisely, $d= 2n^2\rho \mod n$. 

In addition, it was proved that the conformal weight of $W^{i,j}$ defines a quadratic form 
\begin{equation}\label{qij}
q(i,j) \equiv \frac{ij}{n}+ \frac{i^2 t}{n^2} \mod \Z
\end{equation}
where $t\in \{0, 1, \dots, n-1\}$ and $t= n^2\rho \mod n$. In particular, $d=2t\mod n$. 

In this case,  the fusion algebra of $V_N^{\phi_g}$ is isomorphic to the group algebra $\C[D]$, where $D$ is an abelian group defined by a central extension 
\[
1\to \Z_n \to D \to \Z_n \to 1
\]
associated with the commutator map $c_d$. The abelian group $D$ is isomorphic to 
\begin{equation}\label{CD}
\Z_{n^2/(n,d)}\times \Z_{(n,d)}, 
\end{equation}
where $(n,d)$ denotes the gcd of $n$ and $d$.
Notice that $q$ also induces a quadratic form on $D$.

\subsection{Fusion ring of $V_{L}^{\hat{g}}$}

Let $L$ be  an even lattice and $g\in O(L)$. From now on, we always assume that $g$ is fixed point free on $L$ and $(1-g)L^*<L$. 

By \cite{Nik}, $L$ can be primitively embedded into an even unimodular lattice $N$. Moreover, $g$ can be lift to an isometry of $N$ and $g$ acts trivially on $K=Ann_N(L)$, i.e., $K=N^g$ and $L=N_g$ (cf. Remark \ref{embed}).  In this case, $\mathcal{D}(L) \cong \mathcal{D}(K)$ and  there is an isomorphism of the discriminant groups $f: \mathcal{D}(L) \to \mathcal{D}(K)$ such that 
\[
V_N= \bigoplus_{\lambda\in L^*/L}  V_{\lambda+L}\otimes V_{f(\lambda)+K}.
\]

Let $\phi_g$ be a standard lift of $g$ in $\aut(V_N)$. By definiton,  $\phi_g$ acts trivially on $V_K$ and it stabilizes the subVOA $V_{L}$. Then $\hat{g} = \phi_g|_{V_{L}}$ defines an automorphism in $\Aut(V_{L})$ and $V_N^{\phi_g}$ contains a full subVOA  $ V_{L}^{\hat{g}}\otimes V_{K} $. Moreover, $(V_{L}^{\hat{g}}, V_{K})$ forms a dual pair in $V_N^{\phi_g}$,  i.e., $\mathrm{Com}_{V_N^{\phi_g}} (V_{K})= V_{L}^{\hat{g}}$ and $\mathrm{Com}_{V_N^{\phi_g}} (V_{L}^{\hat{g}}) = V_{K}$. Therefore, any irreducible $V_{L}^{\hat{g}}$-module can be realized as a submodule of an irreducible $V_{N}^{\phi_g}$-module \cite{KMi} and one can compute the fusion rules  for  $V_{L}^{\hat{g}}$ by using the fusion rules of $V_{N}^{\phi_g}$.


By Lemma \ref{Lem:ordSLift},  $\phi_g$ has order $p=|g|$ if $p$ is odd. When $p$ is even,   
$\phi_g$ has order $p$ if $( x|  g^{p/2}(x))\in 2\Z$ for all $x\in N$ and  $\phi_g$ has order $2p$ if $( x| g^{p/2}(x)) \in 1+2\Z$ for some $x\in N$. 

As an easy observation, we notice that the order of $\phi_g$ depends only on the pair $(L,g)$. 
Recall that a lattice $L$ is said to be \emph{doubly even} if $(x|x)\in 4\Z$ for any $x\in L$.

\begin{lemma}\label{Lem:ordSLift1}
Let $L$, $g$, $N$ and $K$ as defined as above. Let $\phi_g$ be a standard lift of $g$ in $\aut(V_N)$. Suppose $p=|g|$ is even. 
Then 
\[
|\phi_g| =
\begin{cases}
|g| & \text{ if }\ 2L_{g^{p/2}}^* \text{ is doubly even},\\
2|g| & \text{ if }\  2L_{g^{p/2}}^* \text{ is not doubly even}.
\end{cases}
\]
\end{lemma}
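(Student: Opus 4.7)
The plan is to apply Lemma \ref{Lem:ordSLift}(2) to the standard lift $\phi_g\in\aut(V_N)$: since $|g|=p$ is even, the order of $\phi_g$ equals $p$ if and only if $(x|g^{p/2}(x))\in 2\Z$ for every $x\in N$, and equals $2p$ otherwise. Setting $h=g^{p/2}$, this is an order-two isometry of $N$ that fixes $K=N^g$ pointwise and restricts to an order-two isometry of $L=N_g$. The task is then to translate the congruence condition on $N$ into an intrinsic condition on $L_{h}$.

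First, decompose $N\otimes\Q=(K\otimes\Q)\perp(L\otimes\Q)$ and write $x=x_K+x_L$; since $h$ is trivial on $K$, one has $(x|h(x))=(x_K|x_K)+(x_L|h(x_L))$. Next, decompose $L\otimes\Q=(L^h\otimes\Q)\perp(L_h\otimes\Q)$ and write $x_L=x_{L,+}+x_{L,-}$. Because $h$ has order two and, by definition of the coinvariant, possesses no nonzero fixed vector in $L_h\otimes\Q$, it acts there as $-1$. A direct computation then gives
\[
(x|h(x))=(x|x)-2(x_{L,-}|x_{L,-}).
\]
Since $N$ is even, the congruence $(x|h(x))\in 2\Z$ is equivalent to $(x_{L,-}|x_{L,-})\in\Z$.

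The core step is to identify the image of the orthogonal projection $\pi_{-}\colon N\otimes\Q\to L_h\otimes\Q$ with the dual lattice $L_h^{*}$. Since $L$ is primitive in $N$ by construction (Remark \ref{embed}), and $L_h=L\cap(L_h\otimes\Q)$ is primitive in $L$ by definition, the sublattice $L_h$ is primitive in $N$, hence a direct $\Z$-summand. For any $v\in L_h^{*}$, the $\Z$-linear map $L_h\to\Z$, $x\mapsto(v|x)$, extends to $N\to\Z$, and by the unimodularity of $N$ this extension is realized by pairing with some $n\in N$; then $\pi_{-}(n)=v$. The reverse inclusion $\pi_{-}(N)\subseteq L_h^{*}$ is automatic from the integrality of $N$. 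Therefore the condition $(x|h(x))\in 2\Z$ for every $x\in N$ is equivalent to $(y|y)\in\Z$ for every $y\in L_h^{*}$, which is exactly the statement that $2L_{g^{p/2}}^{*}$ is doubly even.

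The main subtlety is the clean identification $\pi_{-}(N)=L_h^{*}$, which relies on the transitive primitivity of $L_h$ in $N$ together with the self-duality of $N$. Once in hand, Lemma \ref{Lem:ordSLift}(2) immediately yields the claimed dichotomy.
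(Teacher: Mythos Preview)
Your proof is correct and follows essentially the same approach as the paper. The paper decomposes $N$ directly as $N^{\varphi}\perp N_{\varphi}$ (with $\varphi=g^{p/2}$) and uses the identity $N_{\varphi}=L_{\varphi}$, whereas you reach the same projection via the two-step decomposition $K\perp L$ followed by $L^{h}\perp L_{h}$; since $N_{\varphi}=L_{h}$ these are the same computation, and your more detailed justification of the surjectivity $\pi_{-}(N)=L_{h}^{*}$ (via primitivity and unimodularity) fills in a step the paper leaves implicit.
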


\begin{proof}
Let $\varphi=g^{p/2}$. Then $|\varphi|=2$. By Lemma \ref{exp},  $2(N_\varphi)^* < N_\varphi$ and $N/(N^\varphi\perp N_\varphi)$ is an elementary abelian $2$-group. Note that $N_{\varphi} =L_{\varphi}$.  

For any $x\in N$, let $x'$ and $x''$ be the image of $x$ under the natural projections from $N$ to  $(N_\varphi)^*$  and
$(N^\varphi)^*$, respectively.  Then 
\[
(x|\varphi x) = -(x'|x')+ (x''|x'') \equiv 2(x'|x') \mod 2
\]    
because $(x|x)= (x'|x')+ (x''|x'')\equiv 0 \mod 2$.  

Since the projection from $N$ to  $(N_\varphi)^*$ is surjective,  $\phi_g$ has order $p$ if and only if $2(x'|x')\equiv 0 \mod 2$ for all $x'\in (N_\varphi)^*= (L_\varphi)^*$, i.e.,  $2L_\varphi^*$ is doubly even.  
\end{proof}


\medskip

Next we shall decompose irreducible $\phi_g^i$-twisted modules as  a sum of irreducible $ V_{L}^{\hat{g}}\otimes V_{K} $-modules. 

\subsubsection{ \textbf{Case 1: }$|\phi_g|=|g|=p$} In this case,  we have 
\[
\begin{split}
V_{N}[\phi_{g}^i] & =\bigoplus_{\lambda\in L^*/L} V_{\lambda+L}[\hat{g}^i]\otimes V_{f(\lambda)+K} 
\end{split}
\] 
for each $0\leq i\leq p-1$, where   $f: \mathcal{D}(L) \to \mathcal{D}(K)$ is an isomorphism of the discriminant groups such that $V_N= \bigoplus_{\lambda\in L^*/L}  V_{\lambda+L}\otimes V_{f(\lambda)+K}$.  
Note also that the conformal weight $\rho$ of the unique irreducible $\phi_g$-twisted $V_N$-module 
$V_{N}[\phi_{g}]$ is the same as the conformal weight of the irreducible $\hat{g}$-twisted $V_L$ module $V_{L}[\hat{g}]$.

For each $\phi_g$-invariant subspace $M$ and $0 \leq j < p-1$, we denote $$
M(j) =\{ w\in M \mid \phi_g(w) = e^{2\pi \sqrt{-1} j/p} w\}.$$  
Therefore, we have 
\[
V_N^{\phi_g} =  \bigoplus_{\lambda\in L^*/L} (V_{\lambda+L}\otimes V_{f(\lambda)+K})^{\phi_g} 
\]
and 
\begin{equation}\label{dwij}
\begin{split}
W^{i,j}= V_{N}[\phi_{g}^i](j) =\bigoplus_{\lambda\in L^*/L} (V_{\lambda+L}[\hat{g}^i]\otimes V_{f(\lambda)+K})(j). 
\end{split}
\end{equation}

It is clear that  the eigenspace 
$(V_{\lambda+L}[\hat{g}^i]\otimes V_{f(\lambda)+K})(j)$ is an irreducible $ V_{L}^{\hat{g}}\otimes V_{K}$-module for any $0 \leq j< p$ and $\lambda\in L^*$.  
Therefore, there exist $0\leq k < p$ such that 
\[
\begin{split}
&(V_{\lambda+L}[\hat{g}^i]\otimes V_{f(\lambda)+K})(j)\cong V_{\lambda+L}[\hat{g}^i](k) \otimes V_{f(\lambda)+K}. 
\end{split}
\]

By adjusting the action of $\hat{g}$ on $V_{\lambda+L}[\hat{g}^i]$ if necessary, we may assume  
\[
\begin{split}
(V_{\lambda+L}[\hat{g}^i]\otimes V_{f(\lambda)+K})(j)\cong V_{\lambda+L}[\hat{g}^i](j) \otimes V_{f(\lambda)+K}.
\end{split}
\]  

\medskip

\begin{lemma}\label{mapIp}
Define $I: R(V_N^{\phi_g}) \to R(V_{L}^{\hat{g}})$ such that $I(W^{i,j}) = V_{L}[\hat{g}^i]( j)$. Then $I$ is an injective  group homomorphism. 
\end{lemma}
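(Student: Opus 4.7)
The strategy is to extract both assertions from the decomposition \eqref{dwij} and the fusion rule \eqref{FWij}, together with the classification of irreducible $V_L^{\hat{g}}$-modules recorded in Remark \ref{Num_mod}. The key point is that picking out the $\lambda=0$ summand in \eqref{dwij} is exactly what the map $I$ does, and this operation is compatible with fusion because $V_K$ is the unit of its own fusion ring.

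For the homomorphism property, observe that $(V_L^{\hat g}, V_K)$ is a dual pair inside $V_N^{\phi_g}$ and that $V_N^{\phi_g}$ is a simple current extension of the full subVOA $V_L^{\hat g}\otimes V_K$. By the coset construction (see \cite{KMi}), each irreducible $V_N^{\phi_g}$-module $W^{i,j}$ restricts as in \eqref{dwij} to a multiplicity-free direct sum of pairwise inequivalent irreducible $V_L^{\hat g}\otimes V_K$-modules indexed by $\lambda\in L^*/L$, the $\lambda=0$ summand being precisely $I(W^{i,j})\otimes V_K$; conversely $W^{i,j}$ is the induction of its $\lambda=0$ summand from $V_L^{\hat g}\otimes V_K$ to $V_N^{\phi_g}$. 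Applying induction to
\[
(I(W^{i,k})\otimes V_K)\fusion_{V_L^{\hat g}\otimes V_K}(I(W^{j,\ell})\otimes V_K) = \bigl(I(W^{i,k})\fusion I(W^{j,\ell})\bigr)\otimes V_K,
\]
using that $V_K\fusion V_K=V_K$ and that $V_L^{\hat g}$ has group-like fusion by Theorem \ref{tDaction} (so the first factor on the right is irreducible), and comparing with \eqref{FWij}, one reads off the $\lambda=0$ isotypic component on both sides to obtain
\[
I(W^{i,k})\fusion I(W^{j,\ell}) = V_L[\hat g^{i+j}](k+\ell+c_d(i,j)) = I(W^{i,k}\fusion W^{j,\ell}).
\]

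For injectivity, suppose $V_L[\hat g^i](j)\cong V_L[\hat g^{i'}](j')$ as $V_L^{\hat g}$-modules. By \cite{DRX} each irreducible $V_L^{\hat g}$-module sits in a unique $\hat g^m$-twisted sector, forcing $i=i'$. Then Remark \ref{Num_mod} together with Proposition \ref{gconj} says that $V_L[\hat g^i]$ decomposes as a $V_L^{\hat g}$-module into exactly $p$ pairwise inequivalent irreducible summands, namely its $p$ eigenspaces under $\hat g$, so $j=j'$. Hence $I$ is injective.

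The main obstacle will be matching the two $\Z_p$-gradings cleanly: the index $j$ on $W^{i,j}$ is defined via the action of $\phi_g$ on the full twisted module $V_N[\phi_g^i]$, while the index $j$ on $V_L[\hat g^i](j)$ is defined via a separately chosen action of $\hat g=\phi_g|_{V_L}$ on $V_L[\hat g^i]$. These two $\Z_p$-gradings agree only after the normalization made in the sentence immediately preceding the lemma, where one rescales the $\hat g$-action on each $V_{\lambda+L}[\hat g^i]$ by a suitable $p$-th root of unity so that, on the $\lambda=0$ summand, the $\hat g$-eigenspace decomposition coincides with the $\phi_g$-eigenspace decomposition of $V_N[\phi_g^i]$. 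With that normalization in place the identification $I(W^{i,j})=V_L[\hat g^i](j)$ is unambiguous, and the argument above goes through without an overall additive twist in the second coordinate.
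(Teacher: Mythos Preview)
Your proof is correct and follows essentially the same approach as the paper: the paper's proof consists of the single sentence ``It follows from formulas \eqref{FWij} and \eqref{dwij},'' and your argument is a careful unpacking of exactly that. Your treatment of the homomorphism property via the $\lambda=0$ summand and your discussion of the normalization of the $\hat g$-action (made just before the lemma) are precisely the details implicit in the paper's one-line proof, and your injectivity argument is a reasonable justification of what the paper leaves to the reader.
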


\begin{proof}
It follows from formulas \eqref{FWij} and \eqref{dwij}. 
\end{proof}

\begin{theorem}\label{thm:5.2}
Suppose that $|g|=p$ is odd or $2L_{g^{p/2}}^*$ is doubly even if $p$ is even. Then  
\[
R(V_{L}^{\hat{g}})\cong L^*/L \times \Z_{p^2/(p,d)}\times \Z_{(p,d)}
\]
as an abelian group and  the quadratic form is given by  
\[
q(V_{\lambda+L}[\hat{g}^i](j)) \equiv \frac{ij}p +\frac{i^2t}{p^2} +\frac{(\lambda| \lambda)}2 \mod \Z, 
\]
where $t\in \{0, 1, \dots, p-1\}$ and $t= p^2\rho \mod p$,  $d= 2p^2\rho \mod p$, and $\rho$ is the conformal weight  of the irreducible $\hat{g}$-twisted module
$V_L[\hat{g}]$. In particular, we have 
\[
(R(V_{L}^{\hat{g}}),q) \cong (\mathcal{D}(L), q) \times (R(V_N^{\phi_g}), q) 
\]
as quadratic spaces. 
\end{theorem}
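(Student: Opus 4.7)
The plan is to first enumerate all inequivalent irreducible $V_L^{\hat{g}}$-modules as the candidates $V_{\lambda+L}[\hat{g}^i](j)$, then identify the natural map
\[
\Phi:\ \mathcal{D}(L)\times R(V_N^{\phi_g})\longrightarrow R(V_L^{\hat{g}}),\qquad (\lambda+L,\, W^{i,j})\longmapsto V_{\lambda+L}[\hat{g}^i](j),
\]
as a group isomorphism via the fusion compatibility between $V_N^{\phi_g}$ and its subVOA $V_L^{\hat{g}}\otimes V_K$, and finally read off the quadratic form by matching conformal weights of the $W^{i,j}$ against the glue anti-isometry $f:\mathcal{D}(L)\to\mathcal{D}(K)$.

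First, by Remark~\ref{Num_mod}, $|R(V_L^{\hat{g}})|=|L^*/L|\cdot p^2$. By \cite{DRX}, every irreducible $V_L^{\hat{g}}$-module embeds into some irreducible $\hat{g}^i$-twisted $V_L$-module. Since $g$ fixes every coset of $L$ in $L^*$, each $V_{\lambda+L}[\hat{g}^i]$ is $\hat{g}$-stable; after the normalization of the $\hat{g}$-action made just before Lemma~\ref{mapIp}, Proposition~\ref{gconj} ensures that $V_{\lambda+L}[\hat{g}^i]$ decomposes into exactly $p$ irreducible $V_L^{\hat{g}}$-submodules $V_{\lambda+L}[\hat{g}^i](j)$, $0\le j\le p-1$. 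The resulting list therefore contains at most $|L^*/L|\cdot p^2$ distinct modules; by the preceding count all candidates are pairwise non-isomorphic and exhaust $R(V_L^{\hat{g}})$, so $\Phi$ is a bijection.

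To see that $\Phi$ is a group homomorphism, combine the decomposition~\eqref{dwij}, the fusion rule~\eqref{FWij} in $R(V_N^{\phi_g})$, and the lattice fusion $V_{f(\lambda)+K}\fusion V_{f(\mu)+K}=V_{f(\lambda+\mu)+K}$ in $V_K$. Since $V_L^{\hat{g}}\otimes V_K$ sits inside $V_N^{\phi_g}$ as a dual pair (as noted before Lemma~\ref{mapIp}), the fusion of $V_L^{\hat{g}}\otimes V_K$-submodules appearing inside the $W^{i,j}$ is governed by the fusion in $V_N^{\phi_g}$. Comparing isotypic $V_L^{\hat{g}}\otimes V_K$-components on both sides of $W^{i,j}\fusion W^{i',j'}=W^{i+i',\,j+j'+c_d(i,i')}$, and using that every module in sight is a simple current (Theorem~\ref{tDaction}), yields
\[
V_{\lambda+L}[\hat{g}^i](j)\fusion V_{\mu+L}[\hat{g}^{i'}](j')=V_{\lambda+\mu+L}[\hat{g}^{i+i'}](j+j'+c_d(i,i')),
\]
which is precisely the product in $\mathcal{D}(L)\times R(V_N^{\phi_g})$; the restriction of $\Phi$ to $\{0\}\times R(V_N^{\phi_g})$ recovers the injection $I$ of Lemma~\ref{mapIp}.

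Finally, the conformal weight modulo $\Z$ of any irreducible $V_L^{\hat{g}}\otimes V_K$-summand of $W^{i,j}$ equals $q(i,j)\equiv ij/p+i^2 t/p^2 \pmod{\Z}$ by~\eqref{qij}. Since $N$ is even unimodular and $L\perp K$ is a full-rank sublattice, the induced gluing isomorphism $f:\mathcal{D}(L)\to\mathcal{D}(K)$ is an anti-isometry of discriminant forms, so $\wt(V_{f(\lambda)+K})\equiv -(\lambda|\lambda)/2\pmod{\Z}$. Therefore
\[
q\bigl(V_{\lambda+L}[\hat{g}^i](j)\bigr)\equiv q(i,j)-\wt(V_{f(\lambda)+K})\equiv \frac{ij}{p}+\frac{i^2 t}{p^2}+\frac{(\lambda|\lambda)}{2}\pmod{\Z},
\]
as claimed. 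The principal technical point is the $\hat{g}$-action normalization ensuring that the eigenvalue label $j$ on the left-hand side of~\eqref{dwij} matches the second coordinate of $W^{i,j}$; once this is arranged, the remainder is a clean tensor-factor comparison inside $R(V_N^{\phi_g})$.
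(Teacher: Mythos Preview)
Your proof is correct and follows essentially the same approach as the paper: both arguments use the decomposition~\eqref{dwij} of $W^{i,j}$ into irreducible $V_L^{\hat{g}}\otimes V_K$-modules, the fusion rules~\eqref{FWij} in $R(V_N^{\phi_g})$, and the anti-isometry $f:\mathcal{D}(L)\to\mathcal{D}(K)$ to read off the quadratic form. The only organizational difference is that the paper defines the map as $\varphi(\lambda+L,W^{i,j})=V_{\lambda+L}^{\hat{g}}\fusion I(W^{i,j})$ (making the homomorphism property immediate from associativity of fusion and Lemma~\ref{mapIp}) and argues injectivity, whereas you define $\Phi$ directly on labels and establish bijectivity by the counting argument from Remark~\ref{Num_mod}; the paper's identity~\eqref{dWijp} shows these are the same map.
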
 

\begin{proof}
Recall that $\phi_g$ has order $p=|g|$ under our hypothesis (cf.  Lemmas \ref{Lem:ordSLift} and \ref{Lem:ordSLift1}) and we have  
\[
\begin{split}
V_N^{\phi_g} &=  \bigoplus_{\lambda\in L^*/L} (V_{\lambda+L}\otimes V_{f(\lambda)+K})^{\phi_g} , \\
&=  \bigoplus_{\lambda\in L^*/L}  V_{\lambda+L}^{\hat{g}}\otimes V_{f(\lambda)+K} .  
\end{split}
\]
Then as a module for $ V_{L}^{\hat{g}}\otimes V_{K} $, 
\[
\begin{split}
W^{i,j} &= 
V_N^{\phi_g} \fusion_{ V_{L}^{\hat{g}} \otimes V_{K}} ( I(W^{i,j}) \otimes V_{K}), \\
&= \bigoplus_{\lambda\in L^*/L}  \left(V_{\lambda+L}^{\hat{g}}\fusion_{V_{L}^{\hat{g}}} I(W^{i,j})\right)\otimes V_{f(\lambda)+ K} .  
\end{split}
\]
By the decomposition of $W^{i,j}$ as irreducible $V_{L}^{\hat{g}}\otimes V_{K} $-modules (cf. \eqref{dwij}), we have 
\begin{equation}\label{dWijp}
V_{\lambda+L}^{\hat{g}}\fusion_{V_{L}^{\hat{g}}} I(W^{i,j}) = V_{\lambda+ L}[\hat{g}^i]( j).
\end{equation}
Now define $\varphi:  L^*/L \times R(V_N^{\phi_g}) \to R(V_{L}^{\hat{g}})$ by 
\[
\varphi( \lambda+L, W^{i,j}) =  V_{\lambda+L}^{\hat{g}} \fusion_{V_{L}^{\hat{g}}} I(W^{i,j}). 
\]
It follows from the fusion rules of $V_{L}$ and $V_N^{\phi_g}$ that $\varphi$ is a group homomorphism. Moreover, it is injective by Lemma \ref{mapIp} and $R(V_N^{\phi_g})\cong \Z_{p^2/(p,d)}\times \Z_{(p,d)}$ by \eqref{CD}.

For the quadratic form, we have $q(W^{i,j}) \equiv \frac{ij}{p}+ \frac{i^2 t}{p^2} \mod \Z$ (cf. \eqref{qij}) and 
\begin{equation}\label{qwijn=p}
W^{i,j}=\bigoplus_{\lambda\in L^*/L} V_{\lambda+L}[\hat{g}^i](j)\otimes V_{f(\lambda)+K}.
\end{equation}
Therefore, 
$$q(I(W^{i,j}))= q( V_{L}[\hat{g}^i](j))= q(W^{i,j}) \equiv \frac{ij}{p} + \frac{i^2t}{p^2} \mod \Z$$
and 
\begin{equation}\label{qform1}
q(V_{\lambda+L}[\hat{g}^i](j))\equiv \frac{ij}p +\frac{i^2t}{p^2} -\frac{(f(\lambda)| f(\lambda) )}2  \equiv \frac{ij}p +\frac{i^2t}{p^2} +\frac{(\lambda| \lambda)}2 \mod \Z. 
\end{equation}

Note that $(\lambda| \lambda)+(f(\lambda)|f(\lambda))\in 2\Z$  and $$\varphi(\lambda+L, W^{i,j}) = V_{\lambda+L}^{\hat{g}} \fusion_{V_{L}^{\hat{g}}} I(W^{i,j})\cong V_{\lambda+L}[\hat{g}^i](j)$$ for any $\lambda+L \in L^*/L$.   By \eqref{qform1}, we have 
\[
q(\varphi(\lambda+L, W^{i,j}))= q(V_{\lambda+L}^{\hat{g}} \fusion_{V_{L}^{\hat{g}}} I(W^{i,j})) =q(\lambda+L)+ q(W^{i,j})
\]
as desired. 
\end{proof}

\subsubsection{\textbf{Case 2: }$|\phi_g|=2|g|=2p$}
In this case, $p=|g|$ is even and $2L_{g^{p/2}}^*$ is not doubly even.  Let $\varphi= g^{p/2}$. Then there is an $x\in L_\varphi^*$ such that $(x|x)\in \frac{1}2 +\Z$ and 
\[
E=\{\alpha\in L_\varphi^*\mid (\alpha|\alpha)\in \Z\}\lneq L_\varphi^*.  
\]
For any $x,y\in L_\varphi^*\setminus E$, we have $(x-y|x-y)= (x|x)+(y|y) -2(x|y)\in \Z$.   
Therefore, $[L_\varphi^*:E]=2$ and $L_\varphi^*= E \cup (x+E)$ for some $x\in L_\varphi^*\setminus E$ . 

Since $\phi_g^p$ has order $2$ and  acts trivially on $V_L\otimes V_K$, $\phi_g^p = \sigma_h$ for some $h\in N/2$ and $h\in (L\oplus K)^*$.  
Without loss, we may assume $h\in L^*$. 

\begin{remark}
Note that for any $a\in L^*$, $\sigma_h=\sigma_a$ if and only if $(h| x) =
( a| x) \mod \Z$ for all $x\in L^*$. It implies $h-a\in L$; hence, 
$h+ L$ is uniquely determined. 
\end{remark}

Since $\phi_g^p=\sigma_h$ is an inner automorphism, the irreducible $\phi_g^p$-twisted module $V_N[ \phi_g^p]$ is given by $V_N^{(h)} \cong V_{h+N}$. 
Recall that 
$  
V_N= \bigoplus_{\lambda\in L^*/L}  V_{\lambda+L}\otimes V_{f(\lambda)+K}.
$ 
Therefore, we have 
\[
V_N[ \phi_g^p]= V_{h+N} = \bigoplus_{\lambda\in L^*/L} V_{h+\lambda+L}\otimes V_{f(\lambda)+K}. 
\]
Moreover, for each $0\leq i\leq p-1$, we have 
\[
\begin{split}
V_{N}[\phi_{g}^i] & =\bigoplus_{\lambda\in L^*/L} V_{\lambda+L}[\hat{g}^i]\otimes V_{f(\lambda)+K} , \\
V_{N}[\phi_{g}^{p+i}] & =\bigoplus_{\lambda\in L^*/L} V_{h+\lambda+L}[\hat{g}^i]\otimes V_{f(\lambda)+K} .
\end{split}
\] 
In this case,  we have 
\[
V_N^{\phi_g}= \bigoplus_{\lambda\in X/L}  (V_{\lambda+L}\otimes V_{f(\lambda)+K})^{\phi_g}.
\]
where $X=\{ x\in L^*\mid (h|x) \in \Z\}$. 

\begin{remark}\label{choiceofu}
Note that $[L^*:X]=2$ and $L^*=X\cup (u+X)$ for some $u\in L^*$.   Notice that $u+h\in X$ if $h\notin X$ (i.e., $(h|h)\notin \Z$). In this case, $h+L=u+\lambda+L$ for some $\lambda\in L^*$ and we may assume $u=h$. If $h\in X$ (i.e., $(h|h)\in \Z$), $h+X\neq u+X$ and we may take any $u\in L^*\setminus X$. 
\end{remark}
\medskip

Let $\rho$  be the conformal weight of the unique irreducible $\phi_g$-twisted $V_N$-module $V_N[\phi_g]$ and let  $t\in \{0, 1, \dots, 2p-1\}$ such that $t=4p^2\rho \mod 2p$.  Then 
$$q(W^{i,j}) \equiv \frac{ij}{2p} + \frac{i^2t}{4p^2} \mod \Z. $$ 
Notice that $\rho$ is also the conformal weight of $V_{\lambda+L}[\hat{g}]$ for any $\lambda\in L^*$ and $\hat{g}=\phi_g|_{V_{L}}$ has order $p$. Thus, the weights of $ V_{\lambda+ L}[\hat{g}]\otimes V_{f(\lambda)+K} $ are in $\rho+ \frac{( f(\lambda)|  f(\lambda))}2 + \frac{1}p \Z$. 

\medskip

By \cite{EMS}, $\rho -\frac{t}{4p^2}  \in \frac{1}{2p} \Z$.  There are also two cases:

\noindent \textbf{Case a:}   $\rho -\frac{t}{4p^2}  \in \frac{1}{p} \Z$. In this case, 
the weights of $ V_{L}[\hat{g}]\otimes V_{K} $ are in $\frac{t}{4p^2} + \frac{1}{p} \Z$ and we have    
\begin{equation}\label{dwij2p}
W^{i,j} = 
\begin{cases}
\displaystyle
\bigoplus_{\lambda\in X/L} (V_{\lambda+ L}[\hat{g}]\otimes V_{f(\lambda)+K})(j),
\quad \text{ if $j$ is even and } 0\leq i< p , \\
\displaystyle \bigoplus_{\lambda\in X/L} (V_{\lambda+u+ L}[\hat{g}]\otimes V_{f(u+\lambda)+K})(j), \quad \text{ if $j$ is odd and } 0\leq i< p, \\ 
\displaystyle
\bigoplus_{\lambda\in X/L} (V_{h+\lambda+ L}[\hat{g}]\otimes V_{f(\lambda)+K})(j),
\quad \text{ if $j$ is even and } p\leq i<  2p, \\
\displaystyle \bigoplus_{\lambda\in X/L} (V_{h+u+\lambda+ L}[\hat{g}]\otimes V_{f(u+\lambda)+K})(j), \quad \text{ if $j$ is odd and } p\leq i < 2p.  
\end{cases}
\end{equation}

Since  
$(V_{\lambda+ L}[\hat{g}]\otimes V_{f(\lambda)+K})(j)$ and $(V_{h+\lambda+ L}[\hat{g}]\otimes V_{f(\lambda)+K})(j)$ are irreducible $V_{L}^{\hat{g}}\otimes V_{K} $-modules for any $0 \leq j< 2p$ and $\lambda\in L^*$, there exist $0\leq k, k' < p$ such that 
\[
\begin{split}
&(V_{\lambda+ L}[\hat{g}]\otimes V_{f(\lambda)+K})(j)\cong V_{\lambda+ L}[\hat{g}](k) \otimes V_{f(\lambda)+K}, \quad \text{ and }\\
& (V_{h+\lambda+ L}[\hat{g}]\otimes V_{f(\lambda)+K})(j)\cong V_{h+\lambda+ L}[\hat{g}](k')\otimes V_{f(\lambda)+K}. 
\end{split}
\]

By adjusting the action of $\hat{g}$ on $V_{\lambda+L_g}[\hat{g}^i]$ if necessary, we may assume  
\[
\begin{split}
(V_{\lambda+ L}[\hat{g}^i]\otimes V_{f(\lambda)+K})(j) &\cong  V_{\lambda+ L}[\hat{g}^i](\frac{j}2) \otimes V_{f(\lambda)+K}, \\
& \qquad \text{ if $j$ is even and } 0\leq i< p , \\
(V_{u+\lambda+ L}[\hat{g}^i]\otimes V_{f(u+\lambda)+K})(j)&\cong  V_{u+\lambda+ L}[\hat{g}^i](\frac{j-1}2) \otimes V_{f(u+\lambda)+K},\\
& \qquad \text{ if $j$ is odd and } 0\leq i< p, \\ 
(V_{h+\lambda+ L}[\hat{g}^i]\otimes V_{f(h+\lambda)+K})(j) &\cong V_{h+\lambda+ L}[\hat{g}^{i-p}](\frac{j}2) \otimes V_{f(h+\lambda)+K},\\
& \qquad \text{ if $j$ is even and } p\leq i < 2p, \\
(V_{h+u+\lambda+ L}[\hat{g}^i]\otimes V_{f(h+u+\lambda)+K})(j)&\cong V_{h+u+\lambda+ L}[\hat{g}^{i-p}](\frac{j-1}2) \otimes V_{f(h+u+\lambda)+K},\\
& \qquad \text{ if $j$ is odd and }  p\leq i < 2p. \\  
\end{split}
\]

Define $I: R(V_N^{\phi_g}) \to R(V_{L}^{\hat{g}})$ such that 
\begin{equation}\label{case2a}
I(W^{i,j}) = 
\begin{cases}
V_{u+ L}[\hat{g}^i]( \frac{j-1}2) & \text{ if } 0\leq i< p,\ j \text{ odd}, \\
V_{L}[\hat{g}^i]( \frac{j}2) & \text{ if } 0\leq i< p,\ j \text{ even},\\
V_{h+u+ L}[\hat{g}^{i-p}]( \frac{j-1}2) & \text{ if } p\leq i< 2p,\ j \text{ odd},\\
V_{h+ L}[\hat{g}^{i-p}]( \frac{j}2) & \text{ if } p\leq i< 2p,\  j \text{ even}.
\end{cases}
\end{equation}
Notice that the map $I$ may depend on the choice of $u$. 

\medskip

\noindent \textbf{Case b:} $\rho -\frac{t}{4p^2}  \in \frac{1}{2p} \Z\setminus \frac{1}{p} \Z$.  
In this case, the weights of the module  $V_{L}[\hat{g}]\otimes V_{K}  $ are in $\frac{t}{4p^2} + \frac{1}{2p} \Z\setminus \frac{1}{p} \Z$ but the weights of $V_{u+L}[\hat{g}]\otimes V_{f(u)+K} $ are in $\frac{t}{4p^2} + \frac{1}p \Z$.  Then 
\[
W^{1,0} = \bigoplus_{\lambda\in X/L} (V_{u+\lambda+ L}[\hat{g}]\otimes V_{f(u+\lambda)+K})(0); 
\]
notice that $q(W^{1,0}) = t/4p^2 \mod \Z$. Similarly, we also have 
\[
W^{1,j} = 
\bigoplus_{\lambda\in X/L} (V_{(\bar{j}+1)u+\lambda+ L}[\hat{g}]\otimes V_{f((\bar{j}+1)u+\lambda)+K})(j) 
\]
where 
\begin{equation}\label{bar}
\bar{j}=\begin{cases}
1  & \text{  if $j$ is odd}, \\
0  & \text{ if $j$ is even}.
\end{cases}
\end{equation}

By the fusion rules, we have 
\[
W^{i,0} = 
\begin{cases}
\displaystyle
\bigoplus_{\lambda\in X/L} (V_{\bar{i}u+\lambda+ L}[\hat{g}^i]\otimes V_{f(\bar{i}u+\lambda)+K})(0)  &\text{ if }  0\leq i< p,\\
\displaystyle
\bigoplus_{\lambda\in X/L} (V_{h+\bar{i}u+\lambda+ L}[\hat{g}^{i-p}]\otimes V_{f(\bar{i}u+\lambda)+K})(0)  &\text{ if }  p\leq i< 2p,
\end{cases}
\]
and hence we have 
\begin{equation}\label{dwij2pb}
W^{i,j} = 
\begin{cases}
\displaystyle
\bigoplus_{\lambda\in X/L} (V_{(\bar{i}+\bar{j})u+\lambda+ L}[\hat{g}^i]\otimes V_{f((\bar{i}+\bar{j})u+\lambda)+K})(j)  &\text{ if }  0\leq i< p,\\
\displaystyle
\bigoplus_{\lambda\in X/L} (V_{h+(\bar{i}+\bar{j})u+\lambda+ L}[\hat{g}^{i-p}]\otimes V_{f((\bar{i}+\bar{j})u+\lambda)+K})(j)  &\text{ if }  p\leq i< 2p,
\end{cases}
\end{equation}

By adjusting the action of $\hat{g}$ on $V_{\lambda+L_g}[\hat{g}^i]$, we may also assume  
\[
\begin{split}
(V_{\lambda+ L}[\hat{g}^i]\otimes V_{f(\lambda)+K})(j) &\cong  V_{\lambda+ L}[\hat{g}^i](\left\lfloor\frac{j}2\right\rfloor) \otimes V_{f(\lambda)+K}, \\
& \qquad \text{ if $i+j$ is even and } 0\leq i< p , \\
(V_{u+\lambda+ L}[\hat{g}^i]\otimes V_{f(u+\lambda)+K})(j)&\cong  V_{u+\lambda+ L}[\hat{g}^i](\left\lfloor\frac{j}2\right\rfloor) \otimes V_{f(u+\lambda)+K},\\
& \qquad \text{ if $i+j$ is odd and } 0\leq i< p, \\ 
(V_{h+\lambda+ L}[\hat{g}^i]\otimes V_{f(h+\lambda)+K})(j) &\cong V_{h+\lambda+ L}[\hat{g}^{i-p}](\left\lfloor\frac{j}2\right\rfloor) \otimes V_{f(h+\lambda)+K},\\
& \qquad \text{ if $i+j$ is even and } p\leq i < 2p, \\
(V_{h+u+\lambda+ L}[\hat{g}^i]\otimes V_{f(h+u+\lambda)+K})(j)&\cong V_{h+u+\lambda+ L}[\hat{g}^{i-p}](\left\lfloor\frac{j}2\right\rfloor) \otimes V_{f(h+u+\lambda)+K},\\
& \qquad \text{ if $i+j$ is odd and }  p\leq i < 2p, \\  
\end{split}
\]  
where $\lfloor x\rfloor$ denotes the greatest integer that is less than or equal to $x$.

Define $I: R(V_N^{\phi_g}) \to R(V_{L}^{\hat{g}})$ such that 
\begin{equation}\label{case2b}
I(W^{i,j}) = 
\begin{cases}
V_{u+ L}[\hat{g}^i](\left\lfloor\frac{j}2\right\rfloor) & \text{ if } 0\leq i< p,\ i+j \text{ odd}, \\
V_{L}[\hat{g}^i](\left\lfloor\frac{j}2\right\rfloor) & \text{ if } 0\leq i< p,\ i+j \text{ even},\\
V_{h+u+ L}[\hat{g}^{i-p}](\left\lfloor\frac{j}2\right\rfloor) & \text{ if } p\leq i< 2p,\ i+j \text{ odd},\\
V_{h+ L}[\hat{g}^{i-p}](\left\lfloor\frac{j}2\right\rfloor) & \text{ if } p\leq i< 2p,\  i+j \text{ even}.
\end{cases}
\end{equation}
Again the map $I$ may depend on the choice of $u$. 


\begin{lemma}\label{mapI}
Let $I$ be defined as in \eqref{case2a} or \eqref{case2b}. 
The map $I$ is a group homomorphism. Moreover, $I$ is $1$ to $1$ if $(h|h)\in \Z$, i.e., $h\in X$; otherwise, $I$ is $2$ to $1$. 
\end{lemma}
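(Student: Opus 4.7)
The plan is to establish the two assertions—the homomorphism property and the injectivity dichotomy—separately, by exploiting the decomposition of each $V_N^{\phi_g}$-module into $V_L^{\hat g}\otimes V_K$-modules given in \eqref{dwij2p}, \eqref{dwij2pb} together with the adjustments of the $\hat g$-actions preceding the definition of $I$.

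For the homomorphism, I argue via multiplicativity of fusion in the tensor product of the tensor categories $V_L^{\hat g}$-mod and $V_K$-mod. From \eqref{dwij2p} (or \eqref{dwij2pb}), each $W^{i,j}$ has a distinguished irreducible $V_L^{\hat g}\otimes V_K$-summand of the form $I(W^{i,j})\otimes V_{f(\lambda_{i,j})+K}$, where $\lambda_{i,j}\in\{0,u,h,h+u\}$ records the base of the parameterizing coset in the four branches of the definition of $I$. Fusing such distinguished summands from $W^{i_1,k_1}$ and $W^{i_2,k_2}$, multiplicativity gives
\[
\bigl(I(W^{i_1,k_1})\fusion I(W^{i_2,k_2})\bigr)\otimes V_{f(\lambda_{i_1,k_1}+\lambda_{i_2,k_2})+K}
\]
as an irreducible summand of $W^{i_1,k_1}\fusion W^{i_2,k_2}=W^{i_1+i_2,k_1+k_2+c_d(i_1,i_2)}$ (via \eqref{FWij}). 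Since $f:\mathcal{D}(L)\to\mathcal{D}(K)$ is an isomorphism, the $V_K$-factor uniquely identifies this summand in the decomposition of $W^{i_1+i_2,k_1+k_2+c_d(i_1,i_2)}$; the adjustments of the $\hat g$-action were made precisely to ensure its $V_L^{\hat g}$-factor equals $I(W^{i_1+i_2,k_1+k_2+c_d(i_1,i_2)})$, giving the identity $I(W^{i_1,k_1})\fusion I(W^{i_2,k_2})=I(W^{i_1+i_2,k_1+k_2+c_d(i_1,i_2)})$.

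For the injectivity count, the image of $I$ is contained in $\{V_{\mu+L}[\hat g^a](b):\mu+L\in\{L,u+L,h+L,h+u+L\},\ 0\le a,b<p\}$. First note $h\notin L$ (else $\sigma_h=\phi_g^p=1$ on $V_N$, contradicting $|\phi_g|=2p$), while $\sigma_h^2=\sigma_{2h}=1$ on $V_N$ gives $2h\in N$; primitivity of $L$ in $N$ combined with $h\in L^*$ then yields $2h\in L$. When $(h|h)\in\Z$, i.e.\ $h\in X$, we have $h\in X\setminus L$ and $u\notin X\supseteq L$, and both $h-u$ and $h+u$ lie outside $X$ (since $h\in X$, $u\notin X$, and $X$ is a subgroup of index $2$), so the four cosets $L,u+L,h+L,h+u+L$ are pairwise distinct in $L^*/L$. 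Hence the image has $4p^2=|R(V_N^{\phi_g})|$ elements and $I$ is bijective. When $(h|h)\notin\Z$, Remark \ref{choiceofu} allows us to take $u=h$, and then $V_{u+L}=V_{h+L}$ and $V_{h+u+L}=V_{2h+L}=V_L$; the image collapses to $\{V_{L}[\hat g^a](b),V_{h+L}[\hat g^a](b):0\le a,b<p\}$, a set of $2p^2$ modules, and an easy inspection of the four branches shows each such module has exactly two preimages (for instance, $V_L[\hat g^a](b)=I(W^{a,2b})=I(W^{a+p,2b+1})$ in Case 2a, and an analogous identification using the parity $i+j$ in Case 2b), so $I$ is $2$-to-$1$.

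The principal obstacle is verifying the homomorphism step rigorously: the eigenspace labels $(b)$ of the modules $V_{\mu+L}[\hat g^a](b)$ depend on the adjustments of the $\hat g$-actions on the twisted $V_L$-modules, and one must check that these adjustments can be made simultaneously consistent across all $W^{i,j}$ so that the fused distinguished $V_L^{\hat g}\otimes V_K$-summand coincides on the nose with the distinguished summand of the fusion product. Equivalently, one must show that the natural map $R(V_N^{\phi_g})\to (R(V_L^{\hat g})\times R(V_K))/\Gamma$—where $\Gamma$ is the subgroup recording the $V_L^{\hat g}\otimes V_K$-decomposition of $V_N^{\phi_g}$—lifts through the section picking out the $V_L^{\hat g}$-factor to a homomorphism into $R(V_L^{\hat g})$.
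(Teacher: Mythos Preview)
Your proposal is correct and follows essentially the same approach as the paper, which simply cites formulas \eqref{FWij}, \eqref{dwij2p} and \eqref{dwij2pb}. You supply considerably more detail than the paper does: in particular, your explicit analysis of the four cosets $L,u+L,h+L,h+u+L$ (including the observation that $2h\in L$ via primitivity of $L$ in $N$) and the resulting bijectivity/two-to-one count is exactly what the paper's one-line proof leaves to the reader; the ``principal obstacle'' you flag about consistency of the $\hat g$-adjustments is a genuine subtlety that the paper also glosses over.
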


\begin{proof}
It follows from formulas \eqref{FWij}, \eqref{dwij2p} and \eqref{dwij2pb}. 
\end{proof}

Let $Y=\{ a\in L^*\mid (a| h) \in \Z\text{ and } (a| u)\in \Z \}$. Then $X> Y > L$. Note that $h, u,\notin Y$ since $( h| u) \notin \Z$, and  $(Y/L) \times  H \cong L^*/ L$,   
where $H$ is  the subgroup of $L^*/ L$ generated by $h+L$ and $u+L$.   Recall that we take $u=h$ if $h\notin X$, i.e., $( h| h) \notin \Z$. In this case, $X=Y$; otherwise, we have $[L^*: Y]= 2^2$. 

For any $0\leq i,j<2p$, we define 
\begin{equation}\label{ep}
\varepsilon_{i,j}=
\begin{cases}
\bar{j} &\text{ if } \rho -\frac{t}{4p^2}  \in \frac{1}{p} \Z, \\
\overline{i + j} & \text{ if } \rho -\frac{t}{4p^2}  \notin \frac{1}{p} \Z. 
\end{cases}
\end{equation}

\begin{theorem}\label{Rpg=2n}
As an abelian group,  we have
\[
R(V_{L}^{\hat{g}})\cong Y/L \times I(R(V_N^{\phi_g})).
\]
\end{theorem}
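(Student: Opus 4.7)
The plan is to mimic the proof of Theorem~\ref{thm:5.2} by constructing a group homomorphism
\[
\psi\colon Y/L\times R(V_N^{\phi_g}) \longrightarrow R(V_L^{\hat g}),\qquad \psi(\lambda+L,W) = V_{\lambda+L}^{\hat g}\fusion_{V_L^{\hat g}} I(W),
\]
and identifying $\ker\psi=\{0\}\times\ker I$; the first isomorphism theorem then yields the desired splitting. First, the order count: since $\phi_g^p=\sigma_h$ acts trivially on $V_L$, the restriction $\hat g=\phi_g|_{V_L}$ has order $p$, and Remark~\ref{Num_mod} gives $|R(V_L^{\hat g})|=|L^*/L|\cdot p^2$. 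By Lemma~\ref{mapI}, $|I(R(V_N^{\phi_g}))|$ equals $4p^2$ when $h\in X$ and $2p^2$ when $h\notin X$, while $[L^*:Y]$ equals $4$ in the first case and $[L^*:Y]=[L^*:X]=2$ in the second (since $u=h$ there). Hence $|Y/L|\cdot|I(R(V_N^{\phi_g}))|=|L^*/L|\cdot p^2=|R(V_L^{\hat g})|$ in both cases.

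Next I would check that $\psi$ is well-defined and multiplicative. For $\lambda\in Y\subseteq X$, $\hat g^p=\sigma_h$ acts trivially on $V_{\lambda+L}$ (since $(h|\lambda)\in\Z$), so $\hat g$ has order dividing $p$ on $V_{\lambda+L}$ and $V_{\lambda+L}^{\hat g}$ is an irreducible $V_L^{\hat g}$-module by Proposition~\ref{gconj}. Multiplicativity of $\lambda+L\mapsto V_{\lambda+L}^{\hat g}$ requires a coherent normalization of the $\hat g$-action on the $V_{\lambda+L}$, analogous to the choice of the representations $\varphi_i$ preceding \eqref{FWij}, so that the $2$-cocycle governing fusion of $\hat g$-eigenspaces vanishes on $Y/L$. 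Granting this, $\psi$ is a homomorphism because $I$ is one by Lemma~\ref{mapI} and fusion in $R(V_L^{\hat g})$ is abelian.

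The heart of the argument is computing $\ker\psi$. If $\psi(\lambda+L,W)=V_L^{\hat g}$, then $V_{\lambda+L}^{\hat g}=I(W^{-1})$ lies in $I(R(V_N^{\phi_g}))$. Inspecting \eqref{case2a} and \eqref{case2b}, the untwisted irreducibles in $I(R(V_N^{\phi_g}))$ (those arising from $i\in\{0,p\}$) are exactly the modules of the form $V_{\nu+L}^{(k)}$ with $\nu+L\in\{0,u,h,h+u\}+L$, so $\lambda+L$ must be one of these four cosets (with eigenvalue index $k=0$). But $\lambda\in Y$ forces both $(\lambda|h)$ and $(\lambda|u)$ to be integers, whereas $(h|u)\notin\Z$ by construction of $u$; a short case check (using $u=h$ and $2h\in L$ when $h\notin X$) rules out $\nu\in\{u,h,h+u\}+L$. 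Hence $\lambda\in L$, so $V_{\lambda+L}^{\hat g}=V_L^{\hat g}$ and $W\in\ker I$, i.e., $\ker\psi=\{0\}\times\ker I$. Combined with the order count, this proves the theorem.

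The main obstacle is the second step above: ensuring that $\lambda+L\mapsto V_{\lambda+L}^{\hat g}$ is a genuine group homomorphism on $Y/L$ rather than merely a projective one. This cocycle issue parallels the one handled in Subsection~\ref{sec:5.1} through the choice of the representations $\varphi_i$ for $V_N^{\phi_g}$, and the defining constraints of $Y$ (in particular $(u|\lambda)\in\Z$, which is a nontrivial restriction beyond $\lambda\in X$ when $h\in X$) should be exactly what trivializes the relevant cocycle.
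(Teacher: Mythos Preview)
Your overall architecture---define $\psi(\lambda+L,W)=V_{\lambda+L}^{\hat g}\fusion I(W)$, verify it is a homomorphism, identify the kernel, and count orders---coincides with the paper's proof. The order count and the kernel/injectivity argument are essentially the same as the paper's (the paper phrases injectivity as ``$h+L$ and $u+L$ are orthogonal to $Y/L$'', which is exactly your case check ruling out $\nu\in\{u,h,h+u\}+L$).

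Where you diverge is the multiplicativity step, which you flag as the main obstacle and propose to handle by normalizing $\hat g$-actions so that a putative $2$-cocycle on $Y/L$ vanishes. The paper avoids this entirely: since $V_L^{\hat g}\otimes V_K$ is a dual pair inside $V_N^{\phi_g}$, and $V_N^{\phi_g}=\bigoplus_{\lambda\in X/L}V_{\lambda+L}^{\hat g}\otimes V_{f(\lambda)+K}$, the fusion rules of $V_N^{\phi_g}$ (which are known from \eqref{FWij}) together with the lattice fusion rules of $V_K$ force
\[
V_{\lambda+L}^{\hat g}\fusion_{V_L^{\hat g}} V_{\mu+L}^{\hat g}=V_{\lambda+\mu+L}^{\hat g}\qquad\text{for all }\lambda,\mu\in X,
\]
with no cocycle analysis needed. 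Concretely, $(V_{\lambda+L}^{\hat g}\otimes V_{f(\lambda)+K})$ and $(V_{\mu+L}^{\hat g}\otimes V_{f(\mu)+K})$ are both summands of $W^{0,0}=V_N^{\phi_g}$; their fusion lies in $W^{0,0}$ and has $V_K$-component $V_{f(\lambda+\mu)+K}$, so the $V_L^{\hat g}$-component must be $V_{\lambda+\mu+L}^{\hat g}$. This is what the paper means by ``it follows from the fusion rules of $V_K$ and $V_N^{\phi_g}$''. In particular your closing speculation is off: the condition $(u|\lambda)\in\Z$ defining $Y$ plays no role in making $\lambda+L\mapsto V_{\lambda+L}^{\hat g}$ a homomorphism (that holds already on all of $X/L$); it is used only for injectivity.
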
 

\begin{proof}
Recall that 
\[
\begin{split}
V_N^{\phi_g} = \bigoplus_{\lambda\in X/L} (V_{\lambda+L} \otimes V_{f(\lambda)+K})^{\phi_g}=  \bigoplus_{\lambda\in X/L} V_{\lambda+L}^{\hat{g}} \otimes V_{f(\lambda)+K}.  
\end{split}
\]
Then as a module for $V_{L}^{\hat{g}}\otimes V_{K} $-module, 
\begin{equation}\label{dWij2p1}
\begin{split}
W^{i,j} &= 
V_N^{\phi_g} \fusion_{ V_{L}^{\hat{g}}\otimes V_{K}} (  I(W^{i,j})\otimes V_{f(\varepsilon_{i,j}u) +K} ), \\
&= \bigoplus_{\lambda\in X/L}  \left(V_{\lambda+L}^{\hat{g}}\fusion_{V_{L}^{\hat{g}}} I(W^{i,j})\right)\otimes V_{f(\varepsilon_{i,j}u+\lambda)+ K} .  
\end{split}
\end{equation}

By the decomposition of $W^{i,j}$ as irreducible $V_{L}^{\hat{g}}\otimes V_{K} $-modules (cf. \eqref{dwij2p} and \eqref{dwij2pb}), we have 
\begin{equation}\label{dWij2p2}
V_{\lambda+L}^{\hat{g}} \fusion_{V_{L}^{\hat{g}}} I(W^{i,j})=
\begin{cases}
V_{\lambda+u+ L}[\hat{g}^i](\left\lfloor\frac{j}2\right\rfloor) & \text{ if } 0\leq i< p,\ \varepsilon_{i,j}=1, \\
V_{\lambda+L}[\hat{g}^i](\left\lfloor\frac{j}2\right\rfloor) & \text{ if } 0\leq i< p,\ \varepsilon_{i,j}=0,\\
V_{\lambda+h+u+ L}[\hat{g}^{i-p}](\left\lfloor\frac{j}2\right\rfloor) & \text{ if } p\leq i< 2p,\ \varepsilon_{i,j}=1,\\
V_{\lambda+h+ L}[\hat{g}^{i-p}](\left\lfloor\frac{j}2\right\rfloor) & \text{ if } p\leq i< 2p,\  \varepsilon_{i,j}=0.
\end{cases}
\end{equation}

Now define $\varphi:  Y/L \times I(R(V_N^{\phi_g}))\to R(V_{L}^{\hat{g}})$ by 
\[
\varphi( \lambda+L, I(W^{i,j})) =  V_{\lambda+L}^{\hat{g}} \fusion_{V_{L}^{\hat{g}}} I(W^{i,j}). 
\]

It follows from the fusion rules of $V_{K}$ and $V_N^{\phi_g}$ that $\varphi$ is a group homomorphism. Moreover, it is injective by Lemma \ref{mapI} and the fact that $h+L$ and $u+L$ are orthogonal to $ Y/L$ with respect to the standard bilinear form. 
\end{proof}

For the quadratic form, we note that 
\begin{equation}\label{qwij}
q(W^{i,j}) = 
\begin{cases}
q(V_{L}[\hat{g}^i](\left\lfloor\frac{j}2\right\rfloor)),
\quad \text{ if } \varepsilon_{i,j}=0\text{ and } 0\leq i< p , \\
q(V_{f(u)+K})+ q(V_{u+L}[\hat{g}^i](\left\lfloor\frac{j}2\right\rfloor)), \quad \text{ if } \varepsilon_{i,j}=1\text{ and } 
0\leq i< p, \\ 
q( V_{h+L}[\hat{g}^{i-p}](\left\lfloor\frac{j}2\right\rfloor)),
\quad \text{ if } \varepsilon_{i,j}=0\text{ and } p\leq i<  2p, \\
q(V_{f(u)+K})+ q(V_{h+u+L}[\hat{g}^{i-p}](\left\lfloor\frac{j}2\right\rfloor)), \quad \text{ if } \varepsilon_{i,j}=1\text{ and } p\leq i < 2p,
\end{cases}
\end{equation}
by \eqref{dwij2p} and \eqref{dwij2pb}. Hence, 
\begin{equation}\label{qIw}
q(I(W^{i,j})) = 
\begin{cases}
\frac{ij}{2p}+ \frac{i^2 t}{4p^2} \mod \Z  & \text{ if } \varepsilon_{i,j}=0,  \\
\frac{ij}{2p}+ \frac{i^2 t}{4p^2} +\frac{( u| u)}2 \mod \Z  & \text{ if  } \varepsilon_{i,j}=1. 
\end{cases}
\end{equation}

For any $\lambda+L \in Y$, we have $\varphi(\lambda+L, I(W^{i,j})) =V_{\lambda+L}^{\hat{g}} \fusion_{V_{L}^{\hat{g}}} I(W^{i,j})$. By \eqref{dWij2p1},  the conformal weight of $\varphi(\lambda+L, I(W^{i,j}))$ is given by 
\[
-\frac{( f(\lambda)| f(\lambda) )}2 + q(I(W^{i,j}))\equiv \frac{( \lambda| \lambda )}2 + q(I(W^{i,j}))\quad \mod \Z. 
\]

\begin{theorem}\label{thm:2a}
Suppose $|\phi_g|=2|g|=2p$. Then 
\[
q(\varphi(\lambda+L, I(W^{i,j})) = 
\begin{cases}
\frac{ij}{2p}+ \frac{i^2 t}{4p^2} +\frac{( \lambda| \lambda)}2 \mod \Z  & \text{ if  } \varepsilon_{i,j}=0, \\
\frac{ij}{2p}+ \frac{i^2 t}{4p^2} +\frac{(u|  u)}2 + \frac{( \lambda| \lambda)}2\mod \Z  & \text{ if } \varepsilon_{i,j}=1. 
\end{cases}
\]
Moreover,  $(R(V_L^{\hat{g}}), q) \cong (Y/L,q) \times (I(R(V_N^{\phi_g})), q)$. 
\end{theorem}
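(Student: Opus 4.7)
The plan is to upgrade the group isomorphism $\varphi : Y/L \times I(R(V_N^{\phi_g})) \to R(V_L^{\hat{g}})$ constructed in Theorem~\ref{Rpg=2n} to an isometry of quadratic spaces, by explicitly computing the conformal weight of $\varphi(\lambda+L, I(W^{i,j})) = V_{\lambda+L}^{\hat g} \fusion_{V_L^{\hat g}} I(W^{i,j})$ in each sub-case $\varepsilon_{i,j}\in\{0,1\}$ and reading off the direct product structure directly from the resulting formula.

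The main input is the identity
\[
q(\varphi(\lambda+L, I(W^{i,j}))) \equiv \frac{(\lambda|\lambda)}{2} + q(I(W^{i,j})) \pmod{\Z},
\]
recorded just before the theorem, which itself comes from the decomposition \eqref{dWij2p1} together with the observation that the companion $V_K$-factor contributes conformal weight $(f(\lambda)|f(\lambda))/2 \equiv -(\lambda|\lambda)/2 \pmod{\Z}$ (using $(\lambda|\lambda)+(f(\lambda)|f(\lambda))\in 2\Z$). Substituting the two cases of \eqref{qIw} into this identity produces exactly the case-by-case formula in the statement. The \emph{moreover} clause is then automatic: the formula expresses $q\circ\varphi$ as a sum of a function of $\lambda+L$ alone and a function of $I(W^{i,j})$ alone, which is precisely the condition for $\varphi$ to respect the orthogonal product of quadratic forms, giving the isometry $(R(V_L^{\hat g}), q) \cong (Y/L, q) \times (I(R(V_N^{\phi_g})), q)$.

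The subtle point I expect to have to articulate carefully is why no cross-term couples $\lambda$ to the coset shifts by $h$ and $u$ implicit in $I(W^{i,j})$ via \eqref{case2a} and \eqref{case2b}. This is precisely where the definition $Y=\{a\in L^*\mid (a|h)\in\Z \text{ and } (a|u)\in\Z\}$ is tailor-made: elements of $Y/L$ pair integrally with both $h+L$ and $u+L$, so the shifts do not perturb the quadratic form modulo $\Z$. This orthogonality is the same ingredient used to establish injectivity of $\varphi$ in Theorem~\ref{Rpg=2n}, and it reappears here as the cleanness of the splitting; beyond this, the argument is just an assembly of \eqref{dWij2p1}, \eqref{qwij} and \eqref{qIw}, so there is no further obstacle.
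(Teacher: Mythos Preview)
Your proposal is correct and follows essentially the same route as the paper: the paper derives the displayed formula directly from \eqref{dWij2p1} and \eqref{qIw} via the identity $q(\varphi(\lambda+L,I(W^{i,j})))\equiv (\lambda|\lambda)/2 + q(I(W^{i,j}))$, and then states the orthogonal splitting without further comment. Your explicit remark that the potential cross term $(u|\lambda)$ (arising when $\varepsilon_{i,j}=1$ from expanding $(u+\lambda|u+\lambda)$) vanishes modulo $\Z$ precisely because $\lambda\in Y$ is the point the paper is tacitly using when it writes ``$-\tfrac{(f(\lambda)|f(\lambda))}{2}+q(I(W^{i,j}))$'' rather than $-\tfrac{(f(\varepsilon_{i,j}u+\lambda)|f(\varepsilon_{i,j}u+\lambda))}{2}+q(W^{i,j})$; you have simply made this step visible.
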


\section{Leech lattice and some explicit examples}\label{sec:6}
Next we will study several explicit examples associated with the Leech lattice. 
In \cite{Ho2}, a construction of holomorphic vertex operator algebras of central charge 24 has been proposed by using some orbifold VOAs associated with coinvariant lattices of the Leech lattice (see \cite[Table 4 and Tables 5--15]{Ho2}). In particular, H\"ohn conjectured that $(R(V_{\Lambda_g}^{\hat{g}}),q)\cong (R( V_{L_\mathfrak{g}}), -q)$ as quadratic spaces for some lattice $L_\g$.  The corresponding isometries  in $O(\Lambda)$ and the related lattices are given in Table \ref{T:g} and Table \ref{T:Lg}.  See Section \ref{intro} for the definition of $Q_\g$ and $L_\g$. In this section, we will verify his conjecture and prove the following theorem. Recall that the cases for prime order elements have been treated in \cite{LS17} (see also \cite{HS}) and thus we only consider the cases when $|g|$ is not a prime in this section.  

\begin{theorem}\label{Th2}
Let $g\in O(\Lambda)$ be as in Table \ref{T:g} and $L_\g$ the lattice associated with $g$ as stated in Table \ref{T:Lg}. Then, 
\[
(R(V_{\Lambda_g}^{\hat{g}}),q)\cong (R( V_{L_\mathfrak{g}}), -q)\cong (\mathcal{D}(L_\g), -q)
\]
as quadratic spaces. 
\end{theorem}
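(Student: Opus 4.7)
The plan is to reduce the theorem, case by case, to the general fusion-ring computation carried out in Section~\ref{S:5}. Since the Leech lattice $\Lambda$ is even unimodular and $\Lambda_g$ is by construction the coinvariant sublattice of $g\in O(\Lambda)$, the setup of Remark~\ref{embed} applies directly with $N=\Lambda$, $L=\Lambda_g$, and $K=\Lambda^g$. Lemma~\ref{L4.2} guarantees $(1-g)\Lambda_g^*<\Lambda_g$, so $V_{\Lambda_g}^{\hat g}$ has group-like fusion by Theorem~\ref{glf_p} and the whole machinery of Section~\ref{S:5} is available. In particular, once we have fixed which of Case~1, Case~2a or Case~2b applies, Theorems~\ref{thm:5.2}, \ref{Rpg=2n} and \ref{thm:2a} give us the group structure and quadratic form of $(R(V_{\Lambda_g}^{\hat g}),q)$ entirely in terms of data attached to $(\Lambda_g,g)$ together with the orbifold invariants of $(V_\Lambda,\phi_g)$ from \cite{EMS}.

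The first concrete step is to determine the order of a standard lift $\phi_g$ for each conjugacy class in Table~\ref{T:g}: by Lemma~\ref{Lem:ordSLift1}, $|\phi_g|=|g|$ iff $2(\Lambda_g)_{g^{p/2}}^*$ is doubly even, which is a direct lattice computation from the Frame shape of $g$. Next, for each class I would read off the conformal weight $\rho$ of the (unique) irreducible $\phi_g$-twisted $V_\Lambda$-module from the shape of $g$ via formula~\eqref{Eq:rho}, which determines the integers $t$ and $d$ governing the quadratic form on $R(V_\Lambda^{\phi_g})$. In the even-order cases where $|\phi_g|=2|g|$, we also need the class $h+\Lambda_g\in\Lambda_g^*/\Lambda_g$ with $\phi_g^p=\sigma_h$ and a representative $u\in\Lambda_g^*\setminus X$ with $X=\{x\in\Lambda_g^*\mid(h|x)\in\Z\}$, and must decide whether we are in Case~2a ($\rho-t/4p^2\in\tfrac1p\Z$) or Case~2b (cf.\ \eqref{ep}); this information feeds into Theorems~\ref{Rpg=2n} and \ref{thm:2a}.

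On the target side, for each case I would compute the discriminant form $(\mathcal{D}(L_\g),q_{L_\g})$ directly from Table~\ref{T:Lg}. Since $L_\g$ is an explicit even overlattice of $Q_\g=\sqrt{k_1}Q^1_L\oplus\cdots\oplus\sqrt{k_r}Q^r_L$, its discriminant group and its $\Q/\Z$-valued quadratic form are elementary to write down in terms of the highest-weight labels of each affine factor (in particular $\mathcal{D}(L_\g)$ is a quotient of $\mathcal{D}(Q_\g)=\prod_i \mathcal{D}(\sqrt{k_i}Q^i_L)$ by the isotropic subgroup corresponding to $L_\g/Q_\g$). Then the claimed isomorphism $(R(V_{\Lambda_g}^{\hat g}),q)\cong(\mathcal{D}(L_\g),-q)$ becomes an identity between two finite quadratic forms of the same order whose Smith invariants and Gauss sums we can compare. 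Writing both as a product of $p$-local pieces, it suffices to check agreement at each prime dividing $|\mathcal{D}(L_\g)|$; genus symbols will in fact often suffice.

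The main obstacle is the case-by-case nature of the verification: for each conjugacy class one must (i) compute $|\phi_g|$, the cycle shape, and $\rho$, (ii) in the Case~2 situations, locate the explicit elements $h$ and $u$ and decide between Case~2a and Case~2b, (iii) determine $Y/L$, the group $I(R(V_\Lambda^{\phi_g}))$ and the quadratic form via~\eqref{qform1} or~\eqref{qwij}--\eqref{qIw}, and (iv) show that the resulting finite quadratic space matches $(\mathcal{D}(L_\g),-q)$. The genuinely nontrivial part is step~(iv) in the mixed cases of Case~2b, where the shift by $(u|u)/2$ in Theorem~\ref{thm:2a} must be matched against the glue structure of $L_\g/Q_\g$; all other steps are essentially bookkeeping once the tables in \cite{Ho2} are available. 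I would organize the verification in a single long table mirroring Table~\ref{T:Lg}, listing $|\phi_g|$, $\rho$, $t$, $d$, $h+\Lambda_g$, $u+\Lambda_g$ and the resulting discriminant form, and checking equality with the data of $L_\g$ row by row; the prime-order classes are already dealt with in \cite{LS17,HS}, so only the composite-order rows remain.
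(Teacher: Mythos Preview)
Your proposal is correct and follows essentially the same strategy as the paper: reduce to the structural Theorems~\ref{thm:5.2}, \ref{Rpg=2n} and \ref{thm:2a} of Section~\ref{S:5}, then verify case by case that the resulting finite quadratic space matches $(\mathcal{D}(L_\g),-q)$. The only minor difference is that where you propose comparing $p$-local genus symbols and Gauss sums in step~(iv), the paper instead (in some cases, e.g.\ $4C$, $8E$) exhibits an explicit subgroup $H<\mathcal{D}(L_\g)$ with $(H,-q|_H)\cong(I(R(V_\Lambda^{\phi_g})),q)$ and checks $(H^\perp,-q)\cong(\mathcal{D}(\Lambda^g),-q)$ directly, while in others (e.g.\ $6E$) it simply identifies the type symbol on both sides; either bookkeeping works.
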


\begin{longtable}{|c|c|c|c|c|c|}
\caption{Standard lift of $g\in O(\Lambda)$}\label{T:g}
\\ \hline 
$|g|$ & $rank(\Lambda^g)$& Conjugacy class& Cycle shape& $|\phi_g|$& Conformal weight of $V_\Lambda[\phi_g], \rho$ \\ \hline
$4$&$10$&$4C$& $1^42^24^4$ &$4$&$3/4$\\
$6$&$6$&$6G$&$2^36^3$&$12$&$11/12$\\
$6$& $8$& $6E$ & $1^22^23^26^2$&$6$& $5/6$ \\
$8$&$6$&$8E$&$1^22^14^18^2$&$8$&$7/8$\\
$10$&$4$&$10F$&$2^2 {10}^2$&$20$&$19/20$\\ 
\hline 
\end{longtable}

\begin{longtable}{|c|c|c|c|}
\caption{The lattice $L_\g$}\label{T:Lg}
\\ \hline 
Conjugacy class&   Root system $Q_\g$& $[L_\g: Q_\g]$& $\mathcal{D}(L_\g)$ \\ \hline
$4C$& $2E_6A_2A_1^2$ &$3$&$2^2\cdot 4^6$\\
$6G$&$\sqrt{6}D_4\sqrt{2}A_2$&$1$&$2^4\cdot 4^2\cdot 5^3$\\
$6E$ & $\sqrt{3}A_1^5\sqrt{2}A_2 A_1$&$2$& $2^6\cdot 3^6$ \\
$8E$&$\sqrt{8}D_5^*\sqrt{2}A_1$&$1$&$2\cdot 4\cdot 8^4$\\
$10F$&$\sqrt{10}D_4$& $1$ & $2^2\cdot 4^2\cdot 5^4$\\ 
\hline 
\end{longtable}

By Theorems \ref{thm:5.2} and \ref{thm:2a}, we know that
\[
(R(V_{\Lambda_g}^{\hat{g}}), q) \cong (\mathcal{D}(\Lambda_g), q) \times (R(V_\Lambda^{\phi_g}), q)\cong (\mathcal{D}(\Lambda^g), -q) \times (R(V_\Lambda^{\phi_g}), q)  
\]
if  $|\phi_g|=|g|$ and 
\[
(R(V_{\Lambda_g}^{\hat{g}}), q) \cong (Y/\Lambda_g,q) \times (I(R(V_\Lambda^{\phi_g})), q)
\]
if $|\phi_g|=2|g|$, where $Y=\{ a\in (\Lambda_g)^*\mid ( a| h)\in \Z\text{ and } (a| u)\in \Z \}$.  The main idea is to find a subgroup $H< \mathcal{D}(L_\g)$ such that $(H, -q|_H) \cong ( I(V_{\Lambda}^{\phi_g}), q)$ and $(H^\perp, -q|_{H^\perp})\cong (Y/\Lambda_g, q)$, where  $H^\perp$ is the subgroup of $\mathcal{D}(L_\g)$ orthogonal to $H$. Basically, all calculations are about the structures of the lattices $L_\g$ and $\Lambda_g$ (or $\Lambda^g$).  
  
By Magma, it is easy to verify that $\Lambda_g= (1-g)\Lambda= (1-g) \Lambda_g^*$ 
for an isometry $g$ listed in Table \ref{T:g}. Hence, $|\Lambda_g^*/ \Lambda_g|=|\det(1-g)|$ by Lemma \ref{detLg}. In fact, $\mathcal{D}(\Lambda_g)$ is determined by the cycle shape of $g$ for these cases. The discriminant groups and the corresponding quadratic structures for the fixed point lattice $\Lambda^g$ and the lattice $L_\g$ can also be computed by Magma (see \cite{HL90} and \cite{HM} for explicit information about the fixed point lattices).

\medskip

First, we discuss the cases which $|\phi_g|=|g|$, i.e., $4C, 6E$ and $8E$.

\subsection{Conjugacy class $4C$}
Let $g$ be an isometry of conjugacy class $4C$ in $O(\Lambda)$. Then  $g^2$ is in the conjugacy class $2A$ and $\Lambda_{g^2}\cong \sqrt{2}E_8$, which is doubly even.  In this case, the fixed point sublattice of $g$ has rank $10$ and  
$\mathcal{D}(\Lambda^g)\cong \mathcal{D}(\Lambda_g) \cong 2^{2}\times 4^{4}$ \cite{HL90}. 
 
Since $V_\Lambda[\phi_g]$ has the conformal weight $3/4$, we have $t\equiv 0 \mod 4$; thus,
$R(V_\Lambda^{\phi_g})\cong 4^2$. 
By Theorem \ref{thm:5.2},  
\[
R(V_{\Lambda_g}^{\hat{g}})\cong \mathcal{D}(\Lambda_g) \times R(V_\Lambda^{\phi_g}) \cong 2^2\times 4^6
\]
and 
\[
(R(V_{\Lambda_g}^{\hat{g}}), q) \cong (\mathcal{D}(\Lambda_g), q) \times (R(V_\Lambda^{\phi_g}), q)\cong (\mathcal{D}(\Lambda^g), -q) \times (R(V_\Lambda^{\phi_g}), q). 
\]

When $g$ is a $4C$ element of $O(\Lambda)$, $L_\g$ is an index $3$ of $2E_6 A_2A_1^2$ with a glue vector $v=2\gamma+ \eta$, where 
$\gamma\in E_6^*$  with norm $4/3$ and $\eta\in A_2^*$ with norm $2/3$ such that $\gamma+E_6$ generates $E_6^*/E_6$ and 
$\eta+A_2$ generates $A_2^*/A_2$. 

Let $\{\alpha_1, \cdots, \alpha_6\}$ be a set of simple roots for $E_6$ such that $\mathrm{Span}_\Z\{\alpha_1, \cdots, \alpha_5\}\cong A_5$ and $(\alpha_3|\alpha_6)=-1$.  We also use $\alpha_0$ to denote the negative of the highest root. 
We also let  $\{\beta_1, \beta_2\}$  be a set of simple roots for $A_1^2$.
Then $\mathcal{D}(L_\g)$ has a set of generator $\{{a}_1, \cdots, {a}_6, {b}_1,{b}_2\}$, where ${a}_i =\alpha_i/2 + L_\g, i=0,1,\dots,6$ and ${b}_i=\beta_i/2 +L_\g, i=1,2$. 

Notice that ${a}_1, \cdots, {a}_6$ have order $4$ and ${b}_1, {b}_2$ have order $2$ in $\mathcal{D}(L_\g)$. Moreover, 
\[
q({a}_1)= \cdots = q({a}_6) = q({b}_1)= q({b}_2)=\frac{1}4 \mod \Z 
\]
and ${a}_1, \cdots, {a}_6$ are orthogonal to ${b}_1, {b}_2$.

By Lemma \ref{mapIp} and Theorem \ref{thm:5.2}, $I(W^{i,j}) =V_{\Lambda_g}[\hat{g}^i](j)$ and 
$q( I(W^{i,j})) = ij/4 \mod \Z$.

Let $x_1= ({a}_0+{a}_6) +{a}_1- \bar{a}_2$ and $x_2= {a}_4-{a}_5+{a}_6$. Then 
\[
q(x_1)=q(x_2)=0 \quad \text{ and } \quad (x_1|x_2) = -1/4 \mod \Z.
\]
Denote the subgroup of $\mathcal{D}(L_\g)$ generated by $x_1$ and $x_2$ by $H$.  
It is straightforward to show that $H\cong 4^2$ and $(H, -q|_H) \cong ( I(V_{\Lambda}^{\phi_g}), q)$. Moreover, the subgroup $H^\perp$ of $\mathcal{D}(L_\g)$ orthogonal to $H$ is generated  by
\[
y_1= {a}_1+{a}_2,\  y_2={a}_3 ,\   y_3={a}_4+{a}_5,\  y_4={a}_1-{a}_5 -({a}_0+2{a}_6) , \  
{b}_1,\  \bar{b}_2.  
\]
Note that $q(y_1)=q(y_2)=q(y_3)=q(y_4)=1/4 \mod \Z$.

By Magma, it is straightforward to verify that $(\Lambda^g)^*$  has the minimal norm $3/2$ and $\mathcal{D}(\Lambda_g^*)$ contains a set of generator $\{a_1', a_2',a_3',a_4', b_1', b_2'\}$ such  that $ a_1', a_2',a_3',a_4'$  have  order $4$ and 
$b_1', b_2'$ have order $2$. Moreover, $q(a_1')=q(a_2')=q(a_3')=q(a_4')=q(b_1')=q(b_2')=1/4 \mod \Z$ and $ (H^\perp, q)\cong (\mathcal{D}(\Lambda_g), q)$.

\subsection{Conjugacy class $6E$}
Let $g$ be an isometry of conjugacy class $6E$ in $O(\Lambda)$. Then 
$g^2$ is in the conjugacy class $3A$ and $g^3$ is in the conjugacy class $2A$. 
The fixed point sublattice of $g$ has rank $8$ and is isometric to $A_2\otimes D_4$. Moreover, the discriminant form $(\mathcal{D}(\Lambda^g), q) \cong 2^{4, +}\times 3^{4, +}$ (cf. \cite{GL3A6A}).  Since the conformal weight of $V_{\Lambda}[\phi_g]$ is $5/6$, we have $t\equiv 0 \mod 6$ and thus $R(V_{\Lambda}^{\phi_g})\cong 6^2$. By \eqref{qij}, it is easy to verify that the quadratic form associated with 
$(R(V_{\Lambda}^{\phi_g}), q)$ is isometric to $2^{2,+}\times 3^{2,+}$. Thus, $R(V_{\Lambda_g}^{\hat{g}})$ is isometric to a quadratic space of type $2^{6,+}\times 3^{6, +}$. 

Recall that $L_\g$ is an index $2$ overlattice of $\sqrt{3}A_1^5\sqrt{2}A_2 A_1$.   By a direct calculation, it is straightforward to verify that $(\mathcal{D}(L_\g), -q)$ is a quadratic space of type $2^{6,+}\times 3^{6, +}$. 

\subsection{Conjugacy class $8E$}
Let $g$ be an isometry of conjugacy class $8E$ in $O(\Lambda)$. Then $g^2$ is in the conjugacy class $4C$ and $g^4$ is in the class $2A$.
The fixed point sublattice of $g$ has rank $6$ and $\mathcal{D}(\Lambda^g)\cong 2\times 4 \times 8^2$. By using computer, one can show that there is  a set of generators $\{x_1, y_1,z_1, z_2\}$ of $\mathcal{D}(\Lambda^g)$ such that $|x_1|=2, |y_1|=4$, $|z_i|=8$ for $i=1,2$; $q(x_1)=1/4$, $q(y_1)=1/8$, $q(z_i)=1/8 \mod \Z, i=1,2$; $\{x_1\}\perp \{y_1\}\perp \{z_1, z_2\} $  and $(z_1|z_2) = -1/8 \mod \Z$.

For $g=8E$, we can choose $L_\g= \sqrt{8}D_5^*\sqrt{2}A_1$. Then $\mathcal{D}(L_\g)\cong 2\cdot 4\cdot 8^4$ and there is a
set of generators $\{a_1, b_1, e_1,e_2, e_3, e_4\}$ of $\mathcal{D}(L_\g)$ such that $|a_1|=2, |b_1|=4$, $|e_i|=8$ for $i=1,2,3,4$; $q(a_1)=1/4$, $q(b_1)=1/8$, $q(e_i)=1/8\mod \Z, i=1,2,3,4$; $(e_i|e_{i+1}) =-1/8 \mod \Z, i=1,2,3$, and $\{a_1\}\perp \{b_1\}\perp \{e_1, e_2,e_3,e_4\} $.

Now it is straightforward to show that $(\mathcal{D}(\Lambda^g), -q) \times (R(V_\Lambda^{\phi_g}), q)\cong (\mathcal{D}(L_\g), -q)$. 
\medskip

Next we discuss the cases that $|\phi_g|= 2|g|$. 

\subsection{Conjugacy class $6G$}
Let $g$ be an isometry of conjugacy class $6G$ in $O(\Lambda)$. Then $g^2$ is in the conjugacy class $3B$ and $g^3$ is in the class $2C$. In this case, $\Lambda_{g^3}\cong \sqrt{2}D_{12}^+$, which is not doubly even.  Therefore, $|\phi_g|$ has order $12$ and $\phi_g^6=\sigma_h$ for some $h\in\frac{1}2 \Lambda_g$ and $( h| h)=2$. We can also choose $u$ such that $(u| u)= \frac{3}2$. The irreducible $\phi_g$-twisted module has the conformal weight $\rho=11/12$; hence we have $t\equiv 0\mod 12$ and $\rho \notin \frac{1}6\Z$. That means we have Case 2b as described in Section \ref{S:5}. 
By Theorem \ref{thm:2a}, we have 
\[
q(I(W^{i,j})) = \frac{ij}{12} + \frac{3}4(\overline{i+j}) \mod \Z. 
\]
In particular, $q(W^{0,1}) =q(W^{1,0}) = 3/4 =-1/4$ and $q(W^{1,1})=1/12 \mod \Z$.

In this case,  the fixed point lattice $\Lambda^g$ has rank $6$ and is isometric to $\sqrt{2}(A_3+\sqrt{3}A_3)^+$. Moreover, $\mathcal{D}(\Lambda^g)\cong 2^6\times 3^3$ and the quadratic form on $\mathcal{D}(\Lambda^g)$ takes values in $\frac{1}{6} \Z$. By computer, it is easy to verify that 
$(\mathcal{D}(\Lambda_g) \cong (\mathcal{D}(\Lambda^g),-q)$ is a non-singular quadratic space of type $2^{6,+} \times 3^{3}$.   

Since $(h| h) \in \Z$, $I$ is injective and we have $R(V_{\Lambda_g}^{\hat{g}})\cong 2^4 \times 4^2 \times 3^5$ 
as an abelian group by Theorem \ref{Rpg=2n}. 
It is also  easy to verify that 
$(Y/\Lambda_g, q) \cong 2^{4,+}\times 3^3$ as a quadratic space, where $Y=\{ a\in (\Lambda_g)^*\mid ( a| h)\in \Z\text{ and } (a| u)\in \Z \}$. 

For $g\in 6G$, $L_\g\cong \sqrt{6}D_4 \sqrt{2}A_2$ and $\mathcal{D}(L_\g)\cong 6^2\cdot {12}^2\cdot 2^2\cdot 3\cong 2^4\cdot 4^2\cdot 3^5$ as an abelian group.  By a direct calculation, it is straightforward to find a subgroup $H$ of $\mathcal{D}(L_\g)$ such that $(H, -q|_H) \cong (I(R(V_\Lambda^{\phi_g})), q)$ and to show that  $(Y/\Lambda_g), q) \times (I(R(V_\Lambda^{\phi_g})), q)\cong (\mathcal{D}(L_\g), -q)$.

\subsection{Conjugacy class $10F$}
Let $g$ be an isometry of conjugacy class $10F$ in $O(\Lambda)$. Then $g^2$ is in the conjugacy class $5B$ and $g^5$ is in the class $2C$. The fixed point sublattice has rank $4$ and the discriminant form $(\mathcal{D}(\Lambda^g), q)$ is isometric to $2^{4,+} \times 5^{2,+}$. 
In this case, $|\phi_g|$ has order $20$ and $\phi_g^{10}=\sigma_h$ for some $h\in\frac{1}2 \Lambda_g$ and $( h| h)=2$. We can choose $u$ such that $(u| u) =3/2$. Since $\rho=19/20\notin 1/10\Z$, we have Case 2b as described in Section \ref{S:5}. In this case, 
 $t\equiv 0\mod 20$ and $( h| h)\in \Z$; thus, we have $R(V_{\Lambda_g}^{\hat{g}})\cong 2^2 \times 4^2 \times 5^4$ 
as an abelian group by Theorem \ref{Rpg=2n}. 

Let $Y=\{ a\in (\Lambda_g)^*\mid ( a| h)\in \Z\text{ and } (a| u) \in \Z \}$. Then it can be verified that 
$(Y/\Lambda^g, q) \cong 2^{2,+} \times 5^{2,+}$. By Theorem \ref{thm:2a}, we also have  
\[
q(I(W^{i,j})) = \frac{ij}{20} +\frac{3}4(\overline{i+j}) \mod \Z. 
\]
Again, by direct calculations, one can verify that $(Y/\Lambda_g), q) \times (I(R(V_\Lambda^{\phi_g})), q)\cong (\mathcal{D}(L_\g), -q)$ as desired.

\section{Reconstruction based on simple current extensions}\label{sec:7} To illustrate  H\"ohn's idea \cite{Ho2}, we will describe a new construction of a holomorphic VOA of central charge $24$ such that its weight one Lie algebra has the type $F_{4,6}A_{2,2}$. 

By \cite[Table 4 and 14]{Ho2},  the  lattice $L_\mathfrak{g}$ is isometric to $\sqrt{6}D_4 +\sqrt{2}A_2$ for $\mathfrak{g}\cong F_{4,6}A_{2,2}$ or $D_{4,12}A_{2,6}$. By the discussion in the previous section, we also know that $(R(V_{\Lambda_g}^{\hat{g}}), q)\cong  (\mathcal{D}(L_\g), -q) $ for $g\in O(\Lambda)$ of conjugacy class $6G$. It suggests that a  holomorphic VOA with weight one Lie algebra of type $F_{4,6}A_{2,2}$  or $D_{4,12}A_{2,6}$ can be constructed as a simple current extension of  $V_{\sqrt{6}D_4 +\sqrt{2}A_2} \otimes V_{\Lambda_g}^{\hat{g}}$, where $g\in O(\Lambda)$ is in the conjugacy class $6G$. 

First we recall  a construction of a holomorphic VOA whose weight one Lie algebra has the type $D_{4,12}A_{2,6}$ from \cite{LS6}. 

Let $g\in O(\Lambda)$ be of the class $6G$. Let $\nu\in \frac{1}6 (\Lambda^g)^*\setminus 
\frac{1}6 \Lambda^g$ such that $$( \nu| \nu)=\frac{1}6, \qquad  
|(\nu + (\Lambda^g)^*)(\frac{1}6)|=5$$ and $\varphi=\sigma_\nu \phi_g$ has order $6$ on $V_\Lambda$. For example, we may choose $g=\varepsilon_D \cdot s$, where 
\begin{center}
  \begin{tikzpicture} 
\draw[step=1cm] (0,0) grid (3,2);
\fill (0.25,0.25) circle(1.5pt);
\fill(0.75,0.25) circle(1.5pt);
\fill(1.25,0.25) circle(1.5pt);
\fill(1.75,0.25) circle(1.5pt);
\fill(2.25,0.25) circle(1.5pt);
\fill(2.75,0.25) circle(1.5pt);
\fill(0.25,0.75) circle(1.5pt);
\fill(0.75,0.75) circle(1.5pt);
\fill(1.25,0.75) circle(1.5pt);
\fill(1.75,0.75) circle(1.5pt);
\fill(2.25,0.75) circle(1.5pt);
\fill(2.75,0.75) circle(1.5pt);
\fill(0.25,1.25) circle(1.5pt);
\fill(0.75,1.25) circle(1.5pt);
\fill(1.25,1.25) circle(1.5pt);
\fill(1.75,1.25) circle(1.5pt);
\fill(2.25,1.25) circle(1.5pt);
\fill(2.75,1.25) circle(1.5pt);
\fill(0.25,1.75) circle(1.5pt);
\fill(0.75,1.75) circle(1.5pt);
\fill(1.25,1.75) circle(1.5pt);
\fill(1.75,1.75) circle(1.5pt);
\fill(2.25,1.75) circle(1.5pt);
\fill(2.75,1.75) circle(1.5pt);
\draw[thick, ->] (0.25, 1.25) --  (0.25,0.8); 
\draw[thick, ->] (0.25, 0.75) --  (0.25,0.3);
\draw[thick, ->] (0.75, 1.25) --  (0.75,0.8); 
\draw[thick, ->] (0.75, 0.75) --  (0.75,0.3);
\draw[thick, ->] (1.25, 1.25) --  (1.25,0.8); 
\draw[thick, ->] (1.25, 0.75) --  (1.25,0.3);
\draw[thick, ->] (1.75, 1.25) --  (1.75,0.8); 
\draw[thick, ->] (1.75, 0.75) --  (1.75,0.3);
\draw[thick, ->] (2.25, 1.25) --  (2.25,0.8); 
\draw[thick, ->] (2.25, 0.75) --  (2.25,0.3);
\draw[thick, ->] (2.75, 1.25) --  (2.75,0.8); 
\draw[thick, ->] (2.75, 0.75) --  (2.75,0.3);
\draw (3.1,0.9) node{, }; 
\draw (-0.5,1) node{$s=$};
\end{tikzpicture}
\end{center}
and $\varepsilon_D$ is the involution associated with the dodecade

\begin{center}
  \begin{tikzpicture} 
\draw[step=1cm] (0,0) grid (3,2);
\draw (0.25,0.23) node{*};
\draw (0.75,0.23) node{*};
\draw (1.25,0.23) node{*};
\draw (0.25,0.73) node{*};
\draw (0.75,0.73) node{*};
\draw (1.25,0.73) node{*};
\draw (0.25,1.23) node{*};
\draw (0.75,1.23) node{*};
\draw (1.25,1.23) node{*};
\draw(1.75,1.73) node{*};
\draw(2.25,1.73) node{*};
\draw(2.75,1.73) node{*};
\draw (3.1,0.9) node{. }; 
\draw (-0.5,1) node{$D=$};
\end{tikzpicture}
\end{center}
Moreover, we may choose 
\[
\nu = \tiny \frac{a}{3}\,
\begin{array}{|rr|rr|rr|}
 \hline 2  & 2& 2  & 0 & 0 & 0 \\
 0 &  0 &  0&  0& 0& 0\\
 0 &  0& 0& 0&  0 & 0 \\
 0 &  0& 0&  0& 0 & 0\\ \hline
\end{array},
\]
where $a=1/\sqrt{8}$. Here we use the notion of hexacode balance to
denote the codewords of the Golay code and the vectors in the Leech lattice (see \cite{CS} and \cite{Gr12} for details).

In this case, the VOA obtained by an orbifold construction from $V_\Lambda$ and $\varphi$ has the weight one Lie algebra of the type $D_{4,12}A_{2,6}$ \cite{LS6}.
By a direct calculation, it is easy to verify that 
\[
\begin{split}
&X_1=(\nu + (\Lambda^g)^*)(\frac{1}6)\\ 
= 
& \tiny\left\{ 
\frac{a}{3}\,
\begin{array}{|rr|rr|rr|}
 \hline 2  & 2& 2  & 0 & 0 & 0 \\
 0 &  0 &  0&  0& 0& 0\\
 0 &  0& 0& 0&  0 & 0 \\
 0 &  0& 0&  0& 0 & 0\\ \hline
\end{array},\quad 
\frac{a}{3}\,
\begin{array}{|rr|rr|rr|}
 \hline -1  & -1& -1  & 0 & 0 & 0 \\
 0 &  0 &  0&  1& 1& 1\\
 0 &  0& 0& 1&  1 & 1 \\
 0 &  0& 0&  1& 1 & 1\\ \hline
\end{array}, 
\quad 
\frac{a}{3}\,
\begin{array}{|rr|rr|rr|}
 \hline -1  & -1& -1  & 0 & 0 & 0 \\
 0 &  0 &  0&  -1& -1& 1\\
 0 &  0& 0& -1&  -1 & 1 \\
 0 &  0& 0&  -1& -1 & 1\\ \hline
\end{array},\right. \\
& \left. \tiny
\quad \frac{a}{3}\,
\begin{array}{|rr|rr|rr|}
 \hline -1  & -1& -1  & 0 & 0 & 0 \\
 0 &  0 &  0&  1& -1& -1\\
 0 &  0& 0& 1&  -1 & -1 \\
 0 &  0& 0&  1& -1 & -1\\ \hline
\end{array},\quad 
\frac{a}{3}\,
\begin{array}{|rr|rr|rr|}
 \hline -1  & -1& -1  & 0 & 0 & 0 \\
 0 &  0 &  0&  -1& 1& -1\\
 0 &  0& 0& -1&  1 & -1 \\
 0 &  0& 0&  -1& 1 & -1\\ \hline
\end{array}
\right\}.
\end{split}
\]

Since $V_\Lambda[\varphi^2]$ also has the conformal weight one, $\varphi^2=\sigma_\mu \phi_{g^2}$ for some $\mu \in (\Lambda^{g^2})^*$ and $(\mu|  \mu)=2/3$. Consider the set
\[
X_2=\{ x\in (\mu + (\Lambda^{g^2})^*)(\frac{2}3) \mid P_0^g(x) \in 2\mu + (\Lambda^{g})^*\}.
\]
Then $|X_2|=7$. In fact, $P_0(X_2)$ has $4$ vectors of norm $1/6$ and $3$ vectors of norm $1/3$;  
\[
\begin{split}
&P_0^g(X_2)(\frac{1}6) \\ 
= 
& \tiny\left\{ 
\frac{a}{3}\,
\begin{array}{|rr|rr|rr|}
 \hline 1  & 1& 1  & 0 & 0 & 0 \\
 0 &  0 &  0&  1& 1& 1\\
 0 &  0& 0& 1&  1 & 1 \\
 0 &  0& 0&  1& 1 & 1\\ \hline
\end{array}, 
\quad 
\frac{a}{3}\,
\begin{array}{|rr|rr|rr|}
 \hline 1  & 1& 1  & 0 & 0 & 0 \\
 0 &  0 &  0&  -1& -1& 1\\
 0 &  0& 0& -1&  -1 & 1 \\
 0 &  0& 0&  -1& -1 & 1\\ \hline
\end{array},\right. \\
& \left. \tiny
\quad \frac{a}{3}\,
\begin{array}{|rr|rr|rr|}
 \hline 1  & 1& 1  & 0 & 0 & 0 \\
 0 &  0 &  0&  1& -1& -1\\
 0 &  0& 0& 1&  -1 & -1 \\
 0 &  0& 0&  1& -1 & -1\\ \hline
\end{array},\quad 
\frac{a}{3}\,
\begin{array}{|rr|rr|rr|}
 \hline 1  & 1& 1  & 0 & 0 & 0 \\
 0 &  0 &  0&  -1& 1& -1\\
 0 &  0& 0& -1&  1 & -1 \\
 0 &  0& 0&  -1& 1 & -1\\ \hline
\end{array}
\right\}.
\end{split}
\]
and 
\[
\begin{split}
&P_0^g(X_2)(\frac{1}3) \\ 
= 
& \tiny\left\{ 
\frac{a}{3}\,
\begin{array}{|rr|rr|rr|}
 \hline 4  & -2& -2  & 0 & 0 & 0 \\
 0 &  0 &  0&  0& 0& 0\\
 0 &  0& 0& 0&  0 & 0 \\
 0 &  0& 0&  0& 0 & 0\\ \hline
\end{array}, 
\quad 
\frac{a}{3}\,
\begin{array}{|rr|rr|rr|}
 \hline -2& 4  &  -2  & 0 & 0 & 0 \\
 0 &  0 &  0&  0& 0& 0\\
 0 &  0& 0& 0&  0 & 0 \\
 0 &  0& 0&  0& 0 & 0\\ \hline
\end{array}, 
\quad 
\frac{a}{3}\,
\begin{array}{|rr|rr|rr|}
 \hline -2& -2  & 4  &  0 & 0 & 0 \\
 0 &  0 &  0&  0& 0& 0\\
 0 &  0& 0& 0&  0 & 0 \\
 0 &  0& 0&  0& 0 & 0\\ \hline
\end{array}
\right\}.
\end{split}
\]

Similarly,  $\varphi^3= \sigma_\eta \phi_{g^3}$ for some $\eta\in (\Lambda^{g^3})^*$ with 
$(\eta| \eta) = 1/2$. Set 
\[
X_3=\{ y\in (\eta + (\Lambda^{g^3})^*)(\frac{1}2) \mid P_0^g(y) \in 3\mu + (\Lambda^{g})^*\}.
\]
Then $|X_3|=6$ and $P_0^g(X_3)$ has $6$ vectors of norm $1/6$;
\[
\begin{split}
&P_0^g(X_3)(\frac{1}6) \\ 
= 
& \tiny\left\{ 
\frac{a}{3}\,
\begin{array}{|rr|rr|rr|}
 \hline 0  & 0& 0  & 0 & 0 & 0 \\
 0 &  0 &  0&  \pm 2& 0& 0\\
 0 &  0& 0& \pm 2&  0 & 0 \\
 0 &  0& 0&  \pm 2& 0 & 0\\ \hline
\end{array}, 
\quad 
\frac{a}{3}\,
\begin{array}{|rr|rr|rr|}
 \hline 0  & 0& 0  & 0 & 0 & 0 \\
 0 &  0 &  0&  0&\pm 2&  0\\
 0 &  0& 0& 0 &\pm 2&   0 \\
 0 &  0& 0& 0 & \pm 2&  0\\ \hline
\end{array}, 
\quad 
\frac{a}{3}\,
\begin{array}{|rr|rr|rr|}
 \hline 0  & 0& 0  & 0 & 0 & 0 \\
 0 &  0 &  0&  0& 0& \pm 2\\
 0 &  0& 0& 0 &   0&\pm 2 \\
 0 &  0& 0&  0 &  0&\pm 2\\ \hline
\end{array} 
\right\}.
\end{split}
\]

Notice that the sublattice 
\[
\Lambda^{g, \nu}=\{ x\in \Lambda^g\mid ( x| \nu) \in \Z\}
\]
is isometric to $\sqrt{6}D_4 +\sqrt{2}A_2$. Therefore, $V_\Lambda^\varphi$ contains a full subVOA isomorphic to $V_{\sqrt{6}D_4 +\sqrt{2}A_2} \otimes V_{\Lambda_g}^{\hat{g}}$ and 
$(V_{\sqrt{6}D_4 +\sqrt{2}A_2}, V_{\Lambda_g}^{\hat{g}})$ forms a dual pair in  $V_\Lambda^\varphi$. Therefore, there is an isometry $\psi:  (R(V_{\sqrt{6}D_4 +\sqrt{2}A_2}), q) \to (R(V_{\Lambda_g}^{\hat{g}}), q)$ such that  $q(M)+ q(\psi(M))=0\mod \Z$ for all $M\in R(V_{\sqrt{6}D_4 +\sqrt{2}A_2})$.

Recall that the isometry group of $L_\mathfrak{g}\cong \sqrt{6}D_4 +\sqrt{2}A_2$ is isomorphic to the central product 
\[
O(D_4)* O(A_2) \cong  O^+(4,3) * (2.Sym_3) \cong 2^{1+4}. (Sym_3\times Sym_3\times Sym_3), 
\] 
while the discriminant group of $L_\mathfrak{g}$ has the type $ 2^{4}\cdot 4^{2} \cdot 3^5$ and the isometry group is $ O(2^{4}\cdot 4^{2}) \times O(5,3)$, which is a group of order $2^{21}\cdot  3^7 \cdot 5$.  

We will consider an isometry of $\mathcal{D}(L_\mathfrak{g})$ such that the discriminant group $\mathcal{D}(\sqrt{2}A_2)\cong 2^{2,-} \cdot 3$ to a subgroup of $\mathcal{D}(\sqrt{6}D_4)$ of the same type.  More precisely, let $\zeta: \mathcal{D}(L_\mathfrak{g}) \to \mathcal{D}(L_\mathfrak{g})$ be an isometry such that 
\[ \tiny
\begin{split}
&a\cdot
\begin{array}{|rr|rr|rr|}
 \hline  2  & -2& 0  & 0 & 0 & 0 \\
 0 &  0 &  0&  0& 0&  0\\
 0 &  0& 0& 0 &   0& 0\\
 0 &  0& 0&  0 &  0& 0\\ \hline
\end{array}
+ L_\mathfrak{g}
\longmapsto 
a\cdot
\begin{array}{|rr|rr|rr|}
 \hline 0  & 0& 0  & 0 & 0 & 0 \\
 0 &  0 &  0&  2&  -2& 0\\
 0 &  0& 0&  2 & -2 &   0\\
 0 &  0& 0& 2&  -2 &  0\\ \hline
\end{array} + L_\mathfrak{g}\\
& a\cdot 
\begin{array}{|rr|rr|rr|}
 \hline  0& 2  & -2  & 0 & 0 & 0 \\
 0 &  0 &  0&  0& 0& 0\\
 0 &  0& 0& 0 &   0&0\\
 0 &  0& 0&  0 &  0&0\\ \hline
\end{array}
+ L_\mathfrak{g}
\longmapsto  
a\cdot
\begin{array}{|rr|rr|rr|}
 \hline 0  & 0& 0  & 0 & 0 & 0 \\
 0 &  0 &  0& 0& 2&  -2\\
 0 &  0& 0&    0 &2 & -2 \\
 0 &  0& 0&  0& 2&  -2 \\ \hline
\end{array} + L_\mathfrak{g}\\
&\frac{a}{3}\cdot
\begin{array}{|rr|rr|rr|}
 \hline  2  & -4& 2  & 0 & 0 & 0 \\
 0 &  0 &  0&  0& 0& 0\\
 0 &  0& 0& 0 &   0&0\\
 0 &  0& 0&  0 &  0& 0\\ \hline
\end{array} + L_\mathfrak{g}
\longmapsto 
\frac{a}{3}\cdot
\begin{array}{|rr|rr|rr|}
 \hline 0  & 0& 0  & 0 & 0 & 0 \\
 0 &  0 &  0&  2& 0&  -2\\
 0 &  0& 0&  2 &   0& -2 \\
 0 &  0& 0& 2&   0 &-2 \\ \hline
\end{array}+ L_\mathfrak{g}
\end{split}
\]
and 
\[ \tiny
\begin{split}
\frac{a}{3}\cdot
\begin{array}{|rr|rr|rr|}
 \hline  2  & 2& 2  & 0 & 0 & 0 \\
 0 &  0 &  0&  0& 0&  0\\
 0 &  0& 0& 0 &   0& 0\\
 0 &  0& 0&  0 &  0& 0\\ \hline
\end{array}+ L_\mathfrak{g}, \quad 
\frac{a}{3}\cdot
\begin{array}{|rr|rr|rr|}
 \hline 1  & 1& 1  & 0 & 0 & 0 \\
 0 &  0 &  0&  1&  1& 1\\
 0 &  0& 0&  1 & 1 &  1\\
 0 &  0& 0&  1&  1 &  1\\ \hline
\end{array}+ L_\mathfrak{g},
\quad 
& \frac{a}{3}\cdot
\begin{array}{|rr|rr|rr|}
 \hline  0& 0  & 0  & 0 & 0 & 0 \\
 0 &  0 &  0&  4&  0&4\\
 0 &  0& 0& 4   &0 &4 \\
 0 &  0& 0&  4 & 0 &4\\ \hline
\end{array} + L_\mathfrak{g}
\end{split}
\]
are fixed by $\zeta$. 

Then $\psi\circ \zeta$ defines an isometry from $(R(V_{\sqrt{6}D_4 +\sqrt{2}A_2}), q)$ to $(R(V_{\Lambda_g}^{\hat{g}}), q)$ such that  $q(M)+ q(\psi\circ \zeta(M))=0\mod \Z$. That means $\psi\circ \zeta$ defines another  holomorphic extension $V(\psi\circ \zeta)$ of 
$V_{\sqrt{6}D_4 +\sqrt{2}A_2} \otimes V_{\Lambda_g}^{\hat{g}}$. It is clear that the weight one Lie algebra of $V(\psi\circ \zeta)$ still has Lie rank $6$. By a direct calculation, it is easy to verify that $V(\psi\circ \zeta)_1$ contains a Lie subalgebra of the type $A_{2,12} A_{2,6} A_{2,2}$. Based on Schellekens' list \cite{Sc93,EMS}, the only possibility for $V(\psi\circ \zeta)_1$ is $F_{4,6}A_{2,2}$.

\medskip

\paragraph{\textbf{Acknowledgment}}  The author thanks Hiroki Shimakura for simulating discussion and comments. He also thanks the referee for very helpful comments. A preliminary version of this article has been reported in a publication of RIMS, Kyoto University.

\end{document}